\newcommand{\Rmnum}[1]{\expandafter\@slowromancap\romannumeral #1@}
\newcommand{\lest}{\leqslant}
\renewcommand{\leq}{\leqslant}
\renewcommand{\geq}{\geqslant}
\newcommand{\sod}{\mathfrak{so}(2n,\mathbb{C})}
\newcommand{\lam}{\lambda}
\newcommand{\hobox}[3]{\draw (0+#1,0-#2) rectangle (1+#1,-1-#2)++(-0.5,+0.5) node {$ #3$};}
\newcommand{\gkd}{\operatorname{GKdim}}
\newcommand{\domscale}{0.5}
\newcommand{\aff}{\mathbf{a}}
\newcommand{\GK}{\mathrm{GKdim}}
\newcommand{\od}{\mathrm{od}}
\newcommand{\ev}{\mathrm{ev}}
\newcommand{\pf}{\begin{proof}}
\newcommand{\epf}{\end{proof}}
\newcommand{\eq}{\begin{equation}}
\newcommand{\eeq}{\end{equation}}
\newcommand{\eqn}{\begin{equation*}}
\newcommand{\eeqn}{\end{equation*}}
\newcommand{\frg}{\mathfrak{g}}
\newcommand{\frh}{\mathfrak{h}}
\newcommand{\frl}{\mathfrak{l}}
\newcommand{\frn}{\mathfrak{n}}
\newcommand{\frq}{\mathfrak{q}}
\newcommand{\fru}{\mathfrak{u}}
\newcommand{\frsl}{\mathfrak{sl}}
\newtheorem{Thm}[equation]{Theorem}
\newtheorem{Cor}[equation]{Corollary}
\newtheorem{prop}[equation]{Proposition}
\newtheorem{lem}[equation]{Lemma}
\newtheorem{Rem}[equation]{Remark}
\theoremstyle{definition}
\newtheorem{definition}[equation]{Definition}
\newtheorem{example}[equation]{Example}
\numberwithin{equation}{section}
\begin{document}
	\bibliographystyle{alpha}
	\title[Reducibility]{On the reducibility of scalar generalized Verma modules associated  to two-step nilpotent parabolic subalgebras}
	
	\author{Zhanqiang Bai, Minyan Fang and Zhaojun Wang}
\address[Bai]{School of Mathematical Sciences, Soochow University, Suzhou 215006, P. R. China}
\email{zqbai@suda.edu.cn}

\address[Fang]{School of Mathematical Sciences,  Soochow University, Suzhou 215006, P. R. China}
	\email{FMY15306129217@163.com}
 
		\address[Wang]{School of Mathematical Sciences,  Soochow University, Suzhou 215006, P. R. China}
	\email{wzj202306@163.com}
	
	
	\subjclass[2010]{Primary 17B10; Secondary 22E47}
	\keywords{Gelfand-Kirillov dimension, Young tableau,  Robinson-Schensted insertion algorithm, Generalized Verma module}

	\begin{abstract}
		Let $\mathfrak{g}$ be a simple complex Lie algebra.
 	A generalized Verma module induced from a one-dimensional representation of a parabolic subalgebra of $\mathfrak{g}$ is
called a scalar generalized Verma module of $\mathfrak{g}$.
In this article,  we use Gelfand-Kirillov dimension to determine the
reducibility of scalar generalized Verma modules of $\mathfrak{g}$ associated to a two-step nilpotent parabolic subalgebra of non-maximal type. Such a module exists only when $\mathfrak{g}=\mathfrak{sl}(n,\mathbb{C})$, $\mathfrak{so}(2n,\mathbb{C})$ or $E_6$. We find that the reducible points of these modules can be drawn in a two-dimensional complex plane.

		
	\end{abstract}
\maketitle

	\tableofcontents
\section{Introduction}
The study of highest weight modules of Lie algebras plays an important role in representation theory. The Gelfand–Kirillov dimension is an important invariant to measure the size of an infinite-dimensional module, which  was first introduced by Gelfand and Kirillov \cite{GK}. In this article,
we will use this invariant to study the reducibility problem of some generalized Verma modules. To state our problem and results, we need some notations.
	
 Let $\mathfrak{g}$ be a finite-dimensional complex simple Lie
algebra and $U(\frg)$ be its universal enveloping algebra.
Let $\mathfrak{h}$ be a Cartan subalgebra  and denote by $\Delta$ the root system associated to $(\frg, \frh)$. We choose a positive root system
$\Delta^+\subset\Delta$ and a simple system $\Pi\subset\Delta^+$. Let $\rho$ be the half sum of positive roots in $\Delta$. Let
$\mathfrak{g}=\frn\oplus\mathfrak{h}\oplus
\bar{\mathfrak{n}}$ be the triangular decomposition of $\frg$ with nilradical $\frn$ and its opposite nilradical $\bar\frn$.  We choose a subset $I\subset\Pi$ and  it generates a subsystem
$\Delta_I\subset\Delta$.
Let $\frq_I$ be the standard parabolic subalgebra corresponding to $I$ with Levi decomposition $\frq_I=\frl_I\oplus\fru_I$. So when $I=\emptyset$, we have $\frq_{\emptyset}=\mathfrak{h}\oplus
\mathfrak{n}=\mathfrak{b}$. 

Let $\frq_I=\frl_I\oplus\fru_I$ and $F(\lambda)$ be a finite-dimensional irreducible $\mathfrak{l}_I$-module with the highest weight $\lambda\in\frh^*$. It can  be viewed as a
$\mathfrak{q}_I$-module by letting $\mathfrak{u}_I$ act on it trivially. The {\it generalized Verma module} (GVM) $M_I(\lambda)$ is defined by
\[
M_I(\lambda):=U(\frg)\otimes_{U(\frq_I)}F(\lambda).
\]
In particular, $M(\lambda)=M_{\emptyset}(\lambda)$ is called a {\it Verma module}.
The irreducible quotient of $M(\lambda)$ is denoted by $L(\lambda)$. It is also the irreducible quotient of $M_I(\lambda)$. When $\dim(F(\lambda))=1$, $M_I(\lambda)$ is called a  {\it generalized Verma
	module of scalar type} or {\it scalar generalized Verma module}. When $F(\lambda)$ is  infinite-dimensional, the reducibility problem  also attracts
considerable attention in the literatures (see for example \cite{FM, KM1, KM2, MCb}). 

The reducibility of generalized Verma modules is very important in representation theory. It is closely related to many other research areas (see for example \cite{BXiao, EHW, Ma, MCb}). In general, the standard method  is to apply Jantzen's criterion  \cite{JC}  to determine the reducibility of generalized Verma
modules.
But in general, this criterion  is not very easy to use in application. For example, Enright-Wolf \cite{EW} studied the reducibility problem  for scalar generalized Verma modules associated
to  many maximal  two-step nilpotent parabolic subalgebras. Kubo \cite{Ku} found some simplified reducibility criteria to solve the reducibility problem for scalar generalized Verma modules associated to exceptional simple Lie algebras and some maximal parabolic subalgebras. By using Kubo's simplification criteria, He \cite{He} gave the reducibility for all scalar generalized Verma modules of Hermitian symmetric pairs. Then He-Kubo-Zierau \cite{HKZ} found the  reducibility for all scalar generalized Verma modules associated to  maximal parabolic subalgebras. Bai-Xiao \cite{BXiao} found the reducibility for all  generalized Verma modules of Hermitian symmetric pairs. Xiao-Zhang \cite{XZ} found some refinements of Jantzen's simplicity criteria and used them to study the reducibility of generalized Verma modules.
All of the above results are based on Jantzen's criterion  \cite{JC}.


Recently Bai-Xiao \cite{BX} found that a scalar generalized Verma module $M_I(\lambda)$ is reducible if and only if the Gelfand-Kirillov dimension of its irreducible quotient $L(\lambda)$ is strictly less than $\dim(\fru_I)$. By using this result, they also obtained the same result as  \cite{He}. 

In general,  it is not easy to compute the
Gelfand-Kirillov dimensions of explicit modules. Bai-Xie \cite{BX} and Bai-Xiao-Xie \cite{BXX} found some simple algorithms using Robinson-Schensted insertion algorithm to compute the Gelfand-Kirillov dimensions of highest weight modules for all classical Lie algebras. By using  these algorithms, Bai-Jiang \cite{BJ} gave a new proof for
the reducibility of the scalar generalized Verma modules associated to  maximal parabolic subalgebras.

In this article, we will use Gelfand-Kirillov dimension to study the reducibility problem  for scalar generalized Verma modules associated
to two-step nilpotent parabolic subalgebras. The definition of $k$-step nilpotent Lie algebras is given in the following:
\begin{definition}
Let $\fru$ be any nonzero Lie algebra. Put $\fru_0=\fru$, $\fru_1=[ \fru,\fru ]$, and $\fru_k=[ \fru,\fru_{k-1} ]$ 
for $k \in \mathbb{Z}_{> 0}$, we call $\fru_k$ the $k$-th step of $\fru$ for $k \in \mathbb{Z}_{\ge 0}$.

The Lie algebra $\fru$ is called nilpotent if $\fru_k=0$ for some $k$, and it is called $k$-step nilpotent if $\fru_{k-1}\neq 0$ and $\fru_k=0$.

If the niradical $\fru$ of a parabolic subalgebra $\frq=\frl\oplus\fru$ is $k$-step nilpotent then we say that $\frq$ is a {\it $k$-step nilpotent parabolic subalgebra}.

\end{definition}
 
From Kubo \cite[Proposition 3.1.4]{Ku}, we find that most results on the reducibility of scalar generalized Verma modules are associated to maximal two-step nilpotent parabolic subalgebras, for example, see \cite{BX,BXiao, He,HKZ,EW}. When the parabolic subalgebra $\frq$ of a simple Lie algebra $\frg$ is  two-step  nilpotent and not a maximal parabolic subalgebra, this case can only happen in type $A$, $D$ and $E_6$.

Suppose $\frq_I \subset \mathfrak{sl}(n, \mathbb{C})$  is two-step  nilpotent and not a maximal parabolic subalgebra. Then $\Pi \backslash I$
contains  two simple roots $\alpha_p$ and $\alpha_q$ by \cite[Proposition 3.1.4]{Ku}. Let $\xi_i$ be the fundamental weight associated to the $i$-th simple root $\alpha_i\in \Pi$.  If $M_I(\lambda)$ is of scalar type, we know that
$\lambda=z_1\xi_p+z_2\xi_q$ for some $z_1,z_2\in\mathbb{C}$. Our criterion is
given as follows.

\begin{Thm}\label{thm-a}
Suppose $ \mathfrak{g}=\mathfrak{sl}(n,\mathbb{C}) $. Let $M_{I}(\lambda)$ be a scalar generalized Verma module of $ \mathfrak{g}$ with highest weight $\lambda=z_1 \xi_p+z_2\xi_q$, where $\xi_p$ is the fundamental weight corresponding to $\alpha_{p}=e_p-e_{p+1}$, $\xi_q$ is the fundamental weight corresponding to $\alpha_{q}=e_q-e_{q+1}$. We write $q-p=k$, $(k\ge 1)$, $m=\min\{p,n-q\}$, $h=\max\{p,n-q\}$. Then $M_{I}(\lambda)$ is reducible if and only if one of the following holds: 
\begin{enumerate}
	\item When $z=z_1=z_2$, we have the following.
 
		\begin{enumerate}
  \item $z\notin \mathbb{Z}$,    
          $m\ge 1$ and $z\in (\frac{1}{2}+\mathbb{Z})\bigcap (-\frac{k+m}{2},+\infty)$.
  \item $z\in \mathbb{Z}$. We have
  \begin{enumerate}
      
			\item If $m\ge k-1$, we have
	\[	z\in\left\{
		\begin{array}{ll}
			-\lceil {\frac {m}{2}}\rceil  -\lfloor {\frac{k-1}{2}}\rfloor + \mathbb{Z}_{\ge 0},&\textnormal{if $k$ is even,}\\	     	  	
			-\lfloor {\frac {m}{2}}\rfloor  -\lfloor {\frac{k-1}{2}}\rfloor + \mathbb{Z}_{\ge 0}, &\textnormal{if $k$ is odd.}	
		\end{array}	
		\right.
		\]
			
			
			\item If $0< m< k-1$, we have 
			\[	z\in\left\{
		\begin{array}{ll}
			-\max \{ \lceil  {\frac{k+m}{2}}\rceil, h \}+1 + \mathbb{Z}_{\ge 0},&\textnormal{if $h<k$, }\\	     	  	
			-k+1 + \mathbb{Z}_{\ge 0}, &\textnormal{if $h\ge k$.}\\	
		\end{array}	
		\right.
		\]
			\item  If $m=0$, we have
			\begin{center}
			    $z \in -\min\{h,k\}+1 + \mathbb{Z}_{\ge 0}$.
			\end{center}
		    
		\end{enumerate}	
		\end{enumerate}
 \item When $z_1\neq z_2$, we have the following.

\begin{enumerate}
		\item If $n-q=0$, we have  $$z_1\in 1-\min\{p, n-p\}+ \mathbb{Z}_{\ge 0}, z_2\in \mathbb{C}.$$
		
		\item If $n-q\ge 1$, we have the following.
		\begin{enumerate}
			\item  $z_2\in 1-\min\{q-p, n-q\}+ \mathbb{Z}_{\ge 0}$, $z_1\in \mathbb{C}$, or
			\item  $z_1\in 1-\min\{p, q-p\}+ \mathbb{Z}_{\ge 0}$, $z_2\in \mathbb{C}$, or
			\item  $(z_1,z_2)\in \mathbb{C}\times \mathbb{C}$, and
				 $z_1+z_2 \in -q+p-\min\{p,n-q\}+\mathbb{Z}_{>0}$.
	\end{enumerate}

	\end{enumerate}

\end{enumerate}
\end{Thm}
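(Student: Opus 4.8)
\emph{Reduction to Gelfand--Kirillov dimension.} By the criterion of Bai--Xiao \cite{BX}, $M_I(\lambda)$ is reducible if and only if $\gkd L(\lambda)<\dim\fru_I$, and since $L(\lambda)$ is a quotient of $M_I(\lambda)$ with $\gkd M_I(\lambda)=\dim\fru_I$, one always has $\gkd L(\lambda)\le\dim\fru_I$, so reducibility is precisely a \emph{strict} drop. For $\frq_I\subset\frsl(n,\bbC)$ with $\Pi\setminus I=\{\alpha_p,\alpha_q\}$ the Levi $\frl_I$ has block sizes $p$, $k=q-p$, $n-q$, whence $\dim\fru_I=pk+p(n-q)+k(n-q)$ and equivalently $\binom n2-\dim\fru_I=\binom p2+\binom k2+\binom{n-q}2$. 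Setting $d(\lambda):=\binom n2-\gkd L(\lambda)$, the theorem is therefore equivalent to the statement that $M_I(\lambda)$ is reducible if and only if $d(\lambda)>\binom p2+\binom k2+\binom{n-q}2$.

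\emph{Translation into Young diagrams.} By the algorithm of \cite{BX,BXX}, $d(\lambda)$ is computed from $\lambda+\rho$ as follows: write $\lambda+\rho$ in $\ve$-coordinates, group the entries by their cosets modulo $\bbZ$, apply Robinson--Schensted row insertion to the subsequence of entries in each coset taken in the natural order to obtain a partition $\mathbf p^{(c)}$, and then $d(\lambda)=\sum_c n(\mathbf p^{(c)})$, where $n(\mu)=\sum_i(i-1)\mu_i=\sum_j\binom{\mu'_j}2$. Because $M_I(\lambda)$ is of scalar type, $\lambda$ is constant on each Levi block, and one checks directly that in $\ve$-coordinates the entries of $\lambda+\rho$ form three decreasing runs of consecutive integers, each shifted by a constant: a run $R_1$ of length $p$ in the coset $z_1+z_2+\bbZ$, a run $R_2$ of length $k$ in the coset $z_2+\bbZ$, and a run $R_3$ of length $n-q$ in $\bbZ$. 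The diagram automorphism $\alpha_i\mapsto\alpha_{n-i}$ carries $(p,q,z_1,z_2)$ to $(n-q,n-p,z_2,z_1)$, preserves $M_I(\lambda)$ up to isomorphism, and fixes every quantity in the statement, so I may assume $p\le n-q$, i.e. $m=p$ and $h=n-q$.

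\emph{Case analysis by coset coincidences.} Which runs share a coset is governed by whether $z_1$, $z_2$, $z_1+z_2$ lie in $\bbZ$. If none do, the three runs lie in three distinct cosets, each $\mathbf p^{(c)}$ is a single column (a strictly decreasing sequence row-inserts to a single column), $d(\lambda)=\binom p2+\binom k2+\binom{n-q}2$, and $M_I(\lambda)$ is irreducible; this accounts for all parameters outside the exceptional sets. If exactly two runs, of lengths $a$ and $b$, share a coset, RS on their concatenation yields a diagram with at most two columns whose lengths Greene's theorem expresses through the offset between the two runs (the difference between the last entry of the earlier run and the first entry of the later one: $z_1+1$ for $R_1,R_2$; $z_2+1$ for $R_2,R_3$; $z_1+z_2+k+1$ for $R_1,R_3$, since $R_2$ is then skipped); comparing $\binom{c_1}2+\binom{c_2}2$ with $\binom a2+\binom b2$, the diagram is strictly fatter exactly when that offset is large enough, and the threshold simplifies to $z_1\in 1-\min\{p,k\}+\bbZ_{\ge0}$, to $z_2\in 1-\min\{k,n-q\}+\bbZ_{\ge0}$, and to $z_1+z_2\in p-q-\min\{p,n-q\}+\bbZ_{>0}$ respectively --- which are exactly the clauses of part (2) and, for $z_1=z_2=z\in\tfrac12+\bbZ$ (when only $R_1$ and $R_3$ merge), of part (1)(a). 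If all three runs share $\bbZ$ (i.e. $z_1,z_2\in\bbZ$), RS on $R_1R_2R_3$ gives a diagram with at most three columns $c_1\ge c_2\ge c_3$, and one must decide when $\binom{c_1}2+\binom{c_2}2+\binom{c_3}2>\binom p2+\binom k2+\binom{n-q}2$; this produces part (1)(b) and, combined with the previous case, the sub-clause (2)(b)(iii).

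\emph{The main obstacle.} Essentially all the work is in the three-run case, where one needs a closed formula for $c_1,c_2,c_3$ as functions of $(z_1,z_2,p,k,n-q)$. By Greene's theorem $c_1+\cdots+c_j$ is the largest cardinality of a union of $j$ decreasing subsequences of $R_1R_2R_3$; each such quantity is piecewise-linear in $(z_1,z_2)$ with breakpoints where an entry of one run crosses an entry of another, and the location of these breakpoints relative to the run lengths $p=m$, $k$, $n-q=h$ is precisely what splits the analysis into the regimes $m\ge k-1$, $0<m<k-1$ (itself split into $h<k$ and $h\ge k$), and $m=0$; the floors and ceilings in the statement ($\lceil m/2\rceil$, $\lfloor(k-1)/2\rfloor$, $\lceil(k+m)/2\rceil$, etc.) appear as the exact integers at which a new box enters the second or third column. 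With these formulas in hand, solving the single inequality $\binom{c_1}2+\binom{c_2}2+\binom{c_3}2>\binom p2+\binom k2+\binom{n-q}2$ in each regime is elementary but intricate, and the degenerate configurations ($m=0$, or $n-q=0$ in part (2)(a)) collapse to the maximal-parabolic reducibility criterion, which serves as a consistency check.
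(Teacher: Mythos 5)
Your setup coincides with the paper's: reduce to the Bai--Xiao criterion $\gkd L(\lambda)<\dim\fru_I$, compute $\lambda+\rho$ in coordinates, split the entries into (at most) three arithmetic runs of lengths $p$, $k$, $n-q$ according to the cosets determined by the integrality of $z_1$, $z_2$, $z_1+z_2$, and compare the RS shape with the ``three-column'' shape $\{p,k,n-q\}$ (the paper's Proposition \ref{mthm}); your reformulation via $d(\lambda)=\binom n2-\gkd L(\lambda)$ and Greene's theorem, and the reduction to $p\le n-q$ by the diagram automorphism, are harmless repackagings of that. The one-coset and two-coset cases you dispose of are indeed the easy ones, and your claimed thresholds there agree with the paper's Steps 1--3 (which invoke the maximal-parabolic result, Proposition \ref{a-reducibility]}).

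However, there is a genuine gap exactly where you label ``the main obstacle.'' The entire content of parts (1)(b)(i)--(iii) and of the integral subcase of (2)(b) is the explicit determination of the column lengths $c_1\ge c_2\ge c_3$ of $P(\lambda+\rho)$ as piecewise-linear functions of $(z_1,z_2,p,k,n-q)$, and the solution of $\sum_j\binom{c_j}2>\binom p2+\binom k2+\binom{n-q}2$ in each regime; the specific constants $-\lceil m/2\rceil-\lfloor(k-1)/2\rfloor$, $-\lfloor m/2\rfloor-\lfloor(k-1)/2\rfloor$, $-\max\{\lceil(k+m)/2\rceil,h\}+1$, $-k+1$, $-\min\{h,k\}+1$ are precisely what must be proved. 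You assert that these ``appear as the exact integers at which a new box enters the second or third column'' but never derive the Greene invariants for the three-run word, never exhibit the breakpoints, and never verify that the inequality flips at the claimed first reducible points; ``elementary but intricate'' is not a proof. This is where the paper spends essentially all of its effort (Steps 2.1--2.3 of the $z_1=z_2$ case and Step 4 of the $z_1\ne z_2$ case, computing tableau shapes at and just below the claimed thresholds and then propagating reducibility upward via Lemma \ref{GKdown}). Until you either carry out the piecewise-linear computation of $(c_1,c_2,c_3)$ and solve the inequality in closed form, or reproduce an equivalent case analysis, your argument establishes the easy clauses but not the theorem; in particular nothing in your text certifies, say, that for $0<m<k-1$ and $h\ge k$ the first reducible point is $z=-k+1$ rather than some neighboring value. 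A secondary, smaller omission: in your two-coset discussion for part (1)(a) and (2) you should also note the degenerate situations $m=0$ (one run empty), where the module stays irreducible even though $z_1+z_2\in\bbZ$, since otherwise clause (1)(a) as stated (which requires $m\ge1$) is not matched by your threshold claim.
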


Suppose $\frq_I \subset \mathfrak{so}(2n, \mathbb{C})$  is two-step  nilpotent and not a maximal parabolic subalgebra. Then $\Pi \backslash I$
contains  two simple roots $\alpha_p$ and $\alpha_q$ ($p,q\in \{1,n-1,n\}$) by \cite[Proposition 3.1.4]{Ku}. Let $\xi_i$ be the fundamental weight associated to the $i$-th simple root $\alpha_i\in \Pi$.  If $M_I(\lambda)$ is of scalar type, we know that
$\lambda=z_1\xi_p+z_2\xi_q$ for some $z_1,z_2\in\mathbb{C}$. Our criterion is
given as follows.

\begin{Thm}\label{son-thm} 
		Let $ \mathfrak{g}=\mathfrak{so}(2n,\mathbb{C}) $. $M_{I}(\lambda)$ is a scalar generalized Verma module with highest weight $\lambda=z_1 \xi_p+z_2\xi_q$ $(p,q\in \{1,n-1,n\})$, where $\xi_p$ is the fundamental weight corresponding to $\alpha_{p}=e_p-e_{p+1}$, $\xi_q$ is the fundamental weight corresponding to $\alpha_{q}=e_q-e_{q+1}$. Then $M_{I}(\lambda)$ is reducible if and only if one of the following holds:
   \begin{enumerate}
\item If $p=1$ and $q=n-1$ or $n$, we have
\begin{enumerate}
    \item  $z_1\in \mathbb{Z}_{\ge 0}, z_2\in \mathbb{C}$, or
    \item $ z_1+z_2\in -n+2+\mathbb{Z}_{\ge 0} $, or
    \item $ z_1\in \mathbb{C} \textnormal{~and~}
        z_2\in \begin{cases}
         -n+3+\mathbb{Z}_{\ge 0}, &\textnormal{if}~n~\textnormal{is odd},\\
         -n+4+\mathbb{Z}_{\ge 0}, &\textnormal{if}~n~ \textnormal{is even.}
     \end{cases}       
    $
\end{enumerate}
 
\item If $p=n-1$ and $q=n$, we have
 \begin{enumerate}
     \item  $z_1\in \mathbb{Z}_{\ge 0}, z_2\in \mathbb{C}$, or
     \item $ z_2\in \mathbb{Z}_{\ge 0}, z_1\in \mathbb{C}$, or
     \item $z_1+z_2\in \begin{cases}
          -n+1+\mathbb{Z}_{\ge 0}, &\textnormal{if}~n~\textnormal{is odd},\\
           -n+2+\mathbb{Z}_{\ge 0}, &\textnormal{if}~n~\textnormal{is even}.
       \end{cases} $
 \end{enumerate}

 \end{enumerate}
\end{Thm}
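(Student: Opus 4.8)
The plan is to derive the theorem from the Bai--Xiao reducibility criterion recalled in the introduction: $M_I(\lam)$ is reducible precisely when $\GK L(\lam) < \dim\fru_I$. Since $M_I(\lam)\cong U(\bar\fru_I)\otimes_{\bbC}F(\lam)$ is free of rank $1$ over $U(\bar\fru_I)$, one has $\GK M_I(\lam) = \dim\fru_I$, while $\GK L(\lam)\le\dim\fru_I$ always; so the whole problem is to decide for which $(z_1,z_2)\in\bbC^2$ the inequality is strict. By \cite[Proposition 3.1.4]{Ku} the two-step non-maximal parabolics of $\frg=\frso(2n,\bbC)$ are exactly those with $\Pi\setminus I\in\bigl\{\{\alpha_1,\alpha_{n-1}\},\{\alpha_1,\alpha_n\},\{\alpha_{n-1},\alpha_n\}\bigr\}$, and the diagram automorphism of $D_n$ swapping $\alpha_{n-1}$ and $\alpha_n$ carries the first family, modules and all, to the second; hence it is enough to treat $\Pi\setminus I=\{\alpha_1,\alpha_{n-1}\}$ (which gives item (1)) and $\Pi\setminus I=\{\alpha_{n-1},\alpha_n\}$ (which gives item (2)).

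The first step is an explicit evaluation of $\dim\fru_I$ in the two cases. Only two nodes near the ends of the Dynkin diagram are deleted, so $\fru_I$ is spanned by a transparent set of positive roots and $\dim\fru_I$ is a low-degree polynomial in $n$; these values are the source of the shifts $-n+2$, $-n+3$, $-n+4$, $-n+1$ appearing in the statement, and also of the $z\in\bbZ_{\ge0}$ conditions, which record the situation where passing to a larger (maximal) parabolic already forces reducibility.

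The second and principal step is to compute $\GK L(\lam)$ as a function of $z_1,z_2,n$ by the Robinson--Schensted insertion algorithm of Bai--Xie and Bai--Xiao--Xie for type $D_n$. With $\rho=(n-1,n-2,\dots,1,0)$, $\xi_1=e_1$, $\xi_{n-1}=\tfrac12(e_1+\cdots+e_{n-1}-e_n)$ and $\xi_n=\tfrac12(e_1+\cdots+e_{n-1}+e_n)$, the vector $\lam+\rho$ is in every case the concatenation of two arithmetic progressions of common difference $1$ whose lengths and offset are governed by $z_1$ and $z_2$. If $\lam$ is not integral one first passes to the integral root subsystem $\Delta_{[\lam]}\subset\Delta$ and runs the algorithm there; since $\Delta_{[\lam]}$ is large enough to force a drop of $\GK L(\lam)$ only when one of $z_1$, $z_2$, $z_1+z_2$ lies in $\bbZ$, this explains why only integer shifts occur in the statement. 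Inserting the tuple, one records how the two progressions merge and, crucially, how many coordinates become related by $a_i=\pm a_j$; the number and placement of these coincidences — equivalently, the degeneracy of the output Young diagram — is what lowers $\GK L(\lam)$ below $\dim\fru_I$. Re-expressing ``the diagram is sufficiently degenerate'' as conditions on $z_1,z_2$ produces the three sub-cases of each item: the diagonal case, the constraints $z_1\in\bbZ_{\ge0}$ or $z_2\in\bbZ_{\ge0}$, and the joint constraint on $z_1+z_2$.

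The main obstacle is exactly this type-$D$ insertion analysis. One has to carry it out with care about signs — $W(D_n)$ consists of signed permutations with an even number of sign changes, and this parity restriction is what produces the ``$n$ odd / $n$ even'' dichotomies in the thresholds for $z_2$ and $z_1+z_2$ — and to treat separately the boundary cases in which $z_1$ or $z_2$ is a small negative integer, where the two progressions overlap degenerately. Once $\GK L(\lam)$ is tabulated, comparing it with the $\dim\fru_I$ from the first step and simplifying the resulting inequalities yields precisely the stated list of reducibility conditions.
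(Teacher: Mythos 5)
Your proposal follows essentially the same route as the paper: the Bai--Xiao criterion $\GK L(\lambda)<\dim\mathfrak{u}_I$ with $\dim\mathfrak{u}_I=\tfrac12(n^2+n-2)$, the Bai--Xie/Bai--Xiao--Xie Robinson--Schensted algorithms applied to $(\lambda+\rho)^-$ (and to the classes $[\lambda]_1,[\lambda]_2,[\lambda]_3$ in the non-integral case), the case split governed by the integrality of $z_1$, $z_2$, $z_1\pm z_2$, and your diagram-automorphism reduction of $\{\alpha_1,\alpha_n\}$ to $\{\alpha_1,\alpha_{n-1}\}$ matching the paper's treatment of that case as ``similar''. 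The only difference is that the decisive even-box tableau computations that pin down the exact boundary reducible points (the actual thresholds $-n+1,-n+2,-n+3,-n+4$ and the odd/even dichotomy) are described but not executed in your plan, whereas these case-by-case computations constitute the body of the paper's proof.
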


Suppose $\frq_I \subset E_6$  is two-step  nilpotent and not a maximal parabolic subalgebra. Then $\Pi \backslash I$
contains  two simple roots $\alpha_1$ and $\alpha_6$ by \cite[Proposition 3.1.4]{Ku}. Let $\xi_i$ be the fundamental weight associated to the $i$-th simple root $\alpha_i\in \Pi$.  If $M_I(\lambda)$ is of scalar type, we know that
$\lambda=z_1\xi_1+z_2\xi_6$ for some $z_1,z_2\in\mathbb{C}$. Our criterion is
given as follows.

\begin{Thm}\label{thm-e}
Suppose $ \mathfrak{g}=E_6$. Let $M_{I}(\lambda)$ be a scalar generalized Verma module of $ \mathfrak{g}$ with highest weight $\lambda=z_1 \xi_1+z_2\xi_6$.  Then $M_{I}(\lambda)$ is reducible if and only if one of the following holds:
\begin{enumerate}
       \item $z_1\in -3+\mathbb{Z}_{\ge 0}$ and $z_2\in \mathbb{C}$.
       \item $z_2\in -3+\mathbb{Z}_{\ge 0}$ and $z_1\in \mathbb{C}$.
       \item $z_1+z_2\in -7+\mathbb{Z}_{\ge 0}$.
   \end{enumerate}
\end{Thm}

The paper is organized as follows. The necessary preliminaries for two-step nilpotent parabolic subalgebras and Gelfand-Kirillov dimension are given in Sections $2$ and $3$. In Section $4$, we give the reducibility of scalar generalized Verma modules for type $A_{n-1}$. In Section $5$, we give the reducibility of scalar generalized Verma modules for type  $D_n$. In Section $6$, we give the reducibility of scalar generalized Verma modules for type  $E_6$.

\section{Two-step nilpotent parabolic subalgebras}\label{nilpotent}
Let $\mathfrak{g}$ be a finite-dimensional complex semisimple Lie
algebra and let $\mathfrak{b}=\mathfrak{h}\oplus\mathop\oplus\limits_{\alpha\in\Delta^+}\frg_\alpha$
be a fixed Borel subalgebra of $\frg$. There is a simple characterization of $k$-step nilpotent  parabolic subalgebras in the following.

\begin{prop}[{\cite[Proposition 3.1.4]{Ku}}]
    Let $\frg$ be a complex simple Lie algebra with highest root $\gamma$, and $\frq_I=\frl_I\oplus\fru_I$ be the parabolic subalgebra of $\frg$ that is parameterized by $I$ with $\Pi-I=\{ \alpha_{i_1},\cdots,\alpha_{i_r} \} \subset \Pi$. Then $\fru$ is k-step nilpotent if and only if $k=m_{i_1}+m_{i_2}+\cdots+m_{i_r}$, where $m_{i_j}$ 
    are the 
   multiplicities   
   of $\alpha_{i_j}$ in  the highest root $\gamma$.
\end{prop}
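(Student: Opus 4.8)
The plan is to analyze the grading on $\fru$ induced by the height function relative to $S$. Fix $S=\{\alpha_{i_1},\dots,\alpha_{i_r}\}\subset\Pi$ and, for a positive root $\beta=\sum_{\alpha\in\Pi}c_\alpha(\beta)\,\alpha$, set $\deg_S(\beta)=\sum_{j=1}^r c_{i_j}(\beta)$, the total multiplicity with which the roots of $S$ appear in $\beta$. Then $\fru=\bigoplus_{d\ge 1}\fru^{(d)}$, where $\fru^{(d)}$ is spanned by the root spaces $\frg_\beta$ with $\beta\in\Delta^+$ and $\deg_S(\beta)=d$; this is a grading of $\fru$ as a Lie algebra, since $[\frg_\beta,\frg_{\beta'}]\subset\frg_{\beta+\beta'}$ and $\deg_S$ is additive. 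Because the Levi $\frl$ is exactly the span of $\frh$ together with the $\frg_{\pm\beta}$ with $\deg_S(\beta)=0$, we have $\fru^{(0)}=0$, and the lower central series of $\fru$ is compatible with this grading in the sense that $\fru_{j-1}\subset\bigoplus_{d\ge j}\fru^{(d)}$ for all $j\ge 1$.

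First I would prove the inclusion $\fru_{j-1}\subset\bigoplus_{d\ge j}\fru^{(d)}$ by an easy induction on $j$: it is the definition for $j=1$, and if $\fru_{j-1}\subset\bigoplus_{d\ge j}\fru^{(d)}$ then $\fru_j=[\fru,\fru_{j-1}]\subset[\fru^{(1)}\oplus\fru^{(2)}\oplus\cdots,\ \fru^{(j)}\oplus\fru^{(j+1)}\oplus\cdots]\subset\bigoplus_{d\ge j+1}\fru^{(d)}$, using additivity of $\deg_S$ again. Consequently, if $k:=\max\{\deg_S(\beta):\beta\in\Delta^+\}$, then $\fru^{(d)}=0$ for $d>k$, so $\fru_k\subset\bigoplus_{d\ge k+1}\fru^{(d)}=0$, i.e.\ $\fru$ is at most $k$-step nilpotent. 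It remains to show $\fru_{k-1}\neq 0$, and that $k$ equals $m_{i_1}+\cdots+m_{i_r}$, where $m_{i_j}$ is the multiplicity of $\alpha_{i_j}$ in the highest root $\gamma$.

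The identity $k=\sum_{j=1}^r m_{i_j}$ follows once we know that the highest root $\gamma$ attains the maximum of $\deg_S$ on $\Delta^+$. This is the standard fact that for every $\beta\in\Delta^+$ and every $\alpha\in\Pi$, the coefficient $c_\alpha(\beta)$ is at most $c_\alpha(\gamma)$; summing over $\alpha\in S$ gives $\deg_S(\beta)\le\deg_S(\gamma)=\sum_j m_{i_j}$, and equality holds for $\beta=\gamma$, so $k=\sum_j m_{i_j}=\deg_S(\gamma)$. For the nonvanishing $\fru_{k-1}\neq 0$, I would exhibit a nonzero element explicitly: the root space $\frg_\gamma$ lies in $\fru^{(k)}$, and since $\gamma$ is the highest root one can write $\gamma=\alpha_{s_1}+\alpha_{s_2}+\cdots+\alpha_{s_k}$ as an ordered sum of simple roots (not necessarily in $S$) such that every partial sum $\alpha_{s_1}+\cdots+\alpha_{s_\ell}$ is again a root; iterating $[\frg_{\alpha_{s_\ell}},-]$ and keeping track of the $\deg_S$-degree of each partial sum, one sees that $\frg_\gamma=[\frg_{\alpha_{s_1}},[\cdots,[\frg_{\alpha_{s_{k-1}}},\frg_{\alpha_{s_k}}]\cdots]]$ can be realized as an iterated bracket in which exactly $k$ brackets of $\fru$-elements occur with all intermediate factors nonzero, which places $\frg_\gamma\subset\fru_{k-1}$. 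Hence $\fru_{k-1}\neq 0$ and $\fru$ is exactly $k$-step nilpotent with $k=m_{i_1}+\cdots+m_{i_r}$.

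The main obstacle is the careful bookkeeping in the last step: one must choose the ordering $\alpha_{s_1},\dots,\alpha_{s_k}$ of the simple-root summands of $\gamma$ so that all partial sums are roots \emph{and} so that each successive bracket is formed between an element of $\fru$ and an element of $\fru$ (never with a Levi element), so that the bracket genuinely lands in $\fru_{k-1}$ rather than in some $\fru_j$ with larger $j$; this amounts to checking that one can peel off the simple-root summands of $S$ one at a time in the chain, interleaved with the summands not in $S$, in a way compatible with the root-string structure. An alternative that avoids the explicit chain is to note that the associated graded Lie algebra $\bigoplus_d \fru^{(d)}/\fru^{(d+1)}$ is generated in degree one together with the observation that $\fru^{(k)}\neq 0$ forces the lower central series to reach step $k$; I would use whichever of these two arguments is cleaner in the write-up, but I expect the partial-sum/root-string argument to be the delicate point either way.
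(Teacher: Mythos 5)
This proposition is imported verbatim from Kubo \cite[Proposition 3.1.4]{Ku} and the paper gives no argument for it, so there is no in-paper proof to compare your route against; judged on its own merits, the first half of your argument is complete and correct: the grading of $\fru$ by $\deg_S$, the inclusion $\fru_{j}\subset\bigoplus_{d\ge j+1}\fru^{(d)}$ proved by induction, and the standard fact that the highest root maximizes every simple-root coefficient together give $\fru_k=0$ with $k=m_{i_1}+\cdots+m_{i_r}$.

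The genuine gap is in the other half, $\fru_{k-1}\neq 0$. First, a small but real miscount: a decomposition of $\gamma$ as an ordered sum of simple roots with all partial sums roots has $\mathrm{ht}(\gamma)$ summands, not $k$, and only $k$ of those summands lie in $S$, so the iterated bracket you display does not have the shape you ascribe to it. More seriously, the condition you impose on the chain --- that each successive bracket be taken between two elements of $\fru$, ``never with a Levi element'' --- cannot be met whenever $\mathrm{ht}(\gamma)>k$, which is the generic case: already for $\frg=\frsl(4,\mathbb{C})$ and $S=\{\alpha_1,\alpha_3\}$ one has $\gamma=\alpha_1+\alpha_2+\alpha_3$ and $k=2$, and any chain built from simple root vectors must at some stage bracket with $\frg_{\alpha_2}\subset\frl$. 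So the bookkeeping you defer is not merely delicate; as formulated it fails. The standard repair is to allow the Levi brackets and show they cost nothing: each term $\fru_j$ of the lower central series is an ideal of $\frq_S$, in particular $\frl$-stable (Jacobi identity plus induction on $j$, since $[\frl,[\fru,\fru_{j-1}]]\subset[[\frl,\fru],\fru_{j-1}]+[\fru,[\frl,\fru_{j-1}]]\subset\fru_j$). Then induct on $\mathrm{ht}(\beta)$: write a nonsimple $\beta\in\Delta^+$ as $\beta=\beta'+\alpha_i$ with $\beta'\in\Delta^+$ and $\alpha_i$ simple; if $\alpha_i\in S$ the depth increases by one, and if $\alpha_i\notin S$ the depth is preserved by $\frl$-stability, giving $\frg_\beta\subset\fru_{\deg_S(\beta)-1}$ for every positive root $\beta$, hence $\frg_\gamma\subset\fru_{k-1}\neq 0$. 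Your proposed alternative, that the associated graded algebra of $\fru$ is generated in degree one, is exactly this missing statement in another guise and is likewise left unproved.
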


From \cite{Bur}, we can find the explicit expression of the highest root $\gamma$. For type $B$ and $C$, all two-step nilpotent parabolic subalgebras are maximal ones. 
For  type $A$,  a two-step nilpotent parabolic subalgebra $\frq=\frl\oplus\fru$  corresponds to the subsets  $I=\Pi\setminus\{\alpha_i,\alpha_j\}$
( $\alpha_i,\alpha_j\in\Pi$). For  type $D$,  a two-step nilpotent parabolic subalgebra $\frq$ is maximal or it corresponds to the subsets  $I=\Pi\setminus\{\alpha_i,\alpha_j\}$
( $i,j\in\{1,n-1,n\}$).





In this section, we give the explicit values of $\dim(\fru)$. The results are as follows.
\begin{enumerate}
\item If $\mathfrak{g}$ is of type $A_{n-1}$, choose \begin{center}
		$\Pi=\{ e_1-e_2,e_2-e_3,\cdots,e_{n-1}-e_n\}$.
	\end{center}
 
 For  $I=\Pi\setminus\{\alpha_p,\alpha_{p+1}\}$, we will have
\begin{align*}
	\Delta^+(\frl)=\{e_i-e_j|1\le i< j\le p ~ \text{or}~ p+2\le i< j\le n\}.
\end{align*}
So	$\dim(\fru)=(p+1)(n-p)-1=p(n-p)+(n-p-1)$.	

For  $I=\Pi\setminus\{\alpha_p,\alpha_q\}, 
q-p>1$, we will have
	\begin{align*}
	\Delta^+(\frl)=\{e_i-e_j|1\le i<j\le p~\text{or}~ p+1\le i<j\le q~\text{or}~ q+1\le i<j\le n\}.
	\end{align*}
So	$\dim(\fru)=q(n-q)+p(q-p)$.	

\item If $\mathfrak{g}$ is of type $D_{n}$, choose 
\begin{center}
		$\Pi=\{ e_1-e_2,e_2-e_3,\cdots,e_{n-1}-e_n,e_{n-1}+e_n\}$.
	\end{center}

For  $I=\Pi\setminus\{\alpha_p,\alpha_{q}\}$, we will have 
$$\Delta^+(\frl) =\begin{cases}
     \{e_i-e_j|2\le i< j\le n-1\}\cup\{e_i+e_n|2\le i\le n-1\},  &\text{if}~p=1,q=n-1,\\
     \{e_i-e_j|2\le i< j\le n\}, &\text{if}~p=1,q=n,\\
     \{e_i-e_j|1\le i< j\le n-1\},  &\text{if}~p=n-1,q=n.
 \end{cases}$$
So $\dim(\fru)=\frac{1}{2}(n^2+n-2)$.

\item If $\mathfrak{g}$ is of type $E_{6}$, choose \begin{center}
		$\Pi=\{ \alpha_1=\frac{1}{2}(e_1-e_2-\cdots e_7+e_8),\alpha_2=e_1+e_2,\alpha_3=e_2-e_1,\cdots,\alpha_6=e_5-e_4\}$.
	\end{center}
 
 For  $I=\Pi\setminus\{\alpha_1,\alpha_{6}\}$, we will have
\begin{align*}
	\Delta^+(\frl)=\{e_j\pm e_i|1\le i< j\le 4\}.
\end{align*}
So	$\dim(\fru)=36-12=24$.

\end{enumerate}

\section{Gelfand-Kirillov dimension}
In this section, we recall the algorithms of Gelfand-Kirillov dimension of highest wight modules found by Bai-Xie \cite{BX} and Bai-Xiao-Xie \cite{BXX}.

Let $M$ be a finitely generated $U(\mathfrak{g})$-module. Fix a finite dimensional generating subspace $M_0$ of $M$. Let $U_{n}(\mathfrak{g})$ be the standard filtration of $U(\mathfrak{g})$. Set $M_n=U_n(\mathfrak{g})\cdot M_0$ and
\(
\text{gr} (M)=\bigoplus\limits_{n=0}^{\infty} \text{gr}_n M,
\)
where $\text{gr}_n M=M_n/{M_{n-1}}$. Thus $\text{gr}(M)$ is a graded module of $\text{gr}(U(\mathfrak{g}))\simeq S(\mathfrak{g})$.

\begin{definition} The \textit{Gelfand-Kirillov dimension} of $M$  is defined by
	\begin{equation*}
	\operatorname{GKdim} M = \varlimsup\limits_{n\rightarrow \infty}\frac{\log\dim( U_n(\mathfrak{g})M_{0} )}{\log n}.
	\end{equation*}
\end{definition}

It is easy to see that the above definition is independent of the choice of $M_0$.

The following two Lemmas are very useful in our proof.
\begin{lem}[{\cite[Theorem 1.1]{BX}}]\label{reducible}
	A scalar generalized Verma module $M_I(\lambda)$ is irreducible if and only if $\emph{GKdim}\:L(\lambda)=\dim(\mathfrak{u})$.
\end{lem}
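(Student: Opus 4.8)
The plan is to reduce the equivalence to the standard fact that, for a finitely generated $U(\frg)$-module $M$ carrying a good filtration, $\GK M$ equals the dimension of the support of $\gr M$ over $S(\frg)=\gr U(\frg)$, combined with an explicit computation of $\gr M_I(\lambda)$. Write $d=\dim(\fru)$ and fix a highest weight generator $v_\lambda$ of $M_I(\lambda)$, so $\Comp v_\lambda$ is a generating subspace. Using the Poincar\'e--Birkhoff--Witt factorization $U(\frg)\cong U(\bar\fru)\otimes U(\frq_I)$ and the fact that $U(\frq_I)$ acts on the one-dimensional space $\Comp v_\lambda$ by scalars, one gets $U_n(\frg)v_\lambda=U_n(\bar\fru)v_\lambda$ for all $n$, and hence $\gr M_I(\lambda)\cong S(\bar\fru)$, the polynomial ring on $\bar\fru$, with $S(\frg)$ acting through the projection $S(\frg)\twoheadrightarrow S(\frg)/S(\frg)\frq_I\cong S(\bar\fru)$. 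In particular $\GK M_I(\lambda)=d$, and since $L(\lambda)$ is a quotient of $M_I(\lambda)$ we get $\GK L(\lambda)\le d$ in general. This already yields one direction: if $M_I(\lambda)$ is irreducible, then $M_I(\lambda)=L(\lambda)$, so $\GK L(\lambda)=\GK M_I(\lambda)=d$.

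For the converse I would argue by contraposition. Assume $M_I(\lambda)$ is reducible and let $N$ be its maximal submodule, so $N\neq 0$ and $M_I(\lambda)/N\cong L(\lambda)$. Endow $N$ with the subspace filtration and $L(\lambda)$ with the quotient filtration coming from the standard filtration of $M_I(\lambda)$; these are good filtrations and the sequence $0\to\gr N\to\gr M_I(\lambda)\to\gr L(\lambda)\to 0$ is exact. Since $N\neq 0$, it contains the principal symbol of a nonzero element, so $\gr N$ is a nonzero $S(\frg)$-submodule of $\gr M_I(\lambda)=S(\bar\fru)$, i.e. a nonzero ideal $J\subseteq S(\bar\fru)$. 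Hence $\gr L(\lambda)\cong S(\bar\fru)/J$, whose support over $S(\frg)$ is the zero locus of $J$; this is a proper closed subset of the irreducible $d$-dimensional variety $\operatorname{Spec}S(\bar\fru)$ (proper because $J\neq 0$), so it has dimension at most $d-1$. Therefore $\GK L(\lambda)=\dim\operatorname{Supp}(\gr L(\lambda))\le d-1<d=\dim(\fru)$. Combining the two directions gives the lemma.

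The only place the argument has real content — the rest being PBW bookkeeping and a dimension count on a polynomial ring — is verifying that the subspace filtration on $N$ and the quotient filtration on $L(\lambda)$ are good and mutually compatible, so that passing to $\gr$ is exact and $\gr N$ can be treated as an honest nonzero ideal of $S(\bar\fru)$; this is where one invokes the Noetherianity of $S(\frg)$ (Artin--Rees). Once that is secured, the strict drop $\GK L(\lambda)<\dim(\fru)$ is automatic from the irreducibility of $\operatorname{Spec}S(\bar\fru)$. Overall this is a streamlined route to \cite[Theorem~1.1]{BX}.
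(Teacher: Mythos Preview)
The paper does not supply a proof of this lemma; it simply quotes \cite[Theorem~1.1]{BX}. Your argument is correct and gives a self-contained proof of the cited result.

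One small clarification on your closing paragraph: you do not need Artin--Rees for any step that actually matters. Exactness of $0\to\gr N\to\gr M_I(\lambda)\to\gr L(\lambda)\to 0$ is automatic for the subspace/quotient filtrations on any short exact sequence of filtered modules --- no Noetherian hypothesis is required. What you do need is (i) that the quotient filtration on $L(\lambda)$ is good, which is immediate because the image of $\Comp v_\lambda$ generates $L(\lambda)$ and the quotient filtration is precisely the standard filtration for that generator; and (ii) that $\gr N\neq 0$, which follows from $N\neq 0$ together with $N\cap\Comp v_\lambda=0$ (otherwise $N$ would contain $v_\lambda$ and hence equal $M_I(\lambda)$). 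Artin--Rees would only be relevant if you needed the induced filtration on $N$ itself to be good, but your dimension count on $S(\bar\fru)/J$ never uses finite generation of $J$, only that $J\neq 0$ in an integral domain. So the proof is in fact cleaner than you suggest.
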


\begin{lem}{{\cite[Lemma 4.4]{BX}}}\label{GKdown}
	Let $\mathfrak{g}$ be a finite-dimensional complex simple Lie
algebra. Let $\{\xi_1,..,\xi_n\} \in \mathfrak{h}^*$ be the fundamental weights.  For any $\lambda\in\mathfrak{h}^*$, we have
	\[
	\GK \:L(\lambda+\xi_i)\leq\GK \: L(\lambda).
	\]
	In particular, if $M_I(\lambda)$ is reducible, then $M_I(\lambda+\xi_i)$ is also reducible.
\end{lem}

 Let $(-, -): \mathfrak{h} \times \mathfrak{h}^* \to \mathbb{C}$ be the canonical pairing. For $\mu\in\mathfrak{h}^*$, define 
\begin{equation*}
\Delta_{[\mu]}:=\{\alpha\in\Delta\mid(\mu, \alpha^\vee)\in\mathbb{Z}\},
\end{equation*}
where $ \alpha^\vee$ is the coroot associated with the root $\alpha \in \Delta$. Set 
\[
W_{[\mu]}:=\{w\in W\mid w\mu-\mu\in \mathbb{Z}\Delta\}.
\]
Then $\Delta_{[\mu]}$ is a root system with Weyl group $W_{[\mu]}$. 
 Let $\Pi_{[\mu]}$ be the simple system of $\Delta_{[\mu]}$. Set $J=\{\alpha\in\Pi_{[\mu]}\mid(\mu, \alpha^\vee)=0\}$. Denote by $W_J$ the Weyl group generated by reflections $s_\alpha$ with $\alpha\in J$. Let $\ell_{[\mu]}$ be the length function on $W_{[\mu]}$. Thus $\ell_{[\mu]}=\ell$ when $\mu$ is integral. Put
\begin{equation*}\label{ceq1}
	W_{[\mu]}^J:=\{w\in W_{[\mu]}\mid \ell_{[\mu]}(ws_\alpha)=\ell_{[\mu]}(w)+1\ \mbox{for all}\ \alpha\in J\}.
\end{equation*}
Thus $W_{[\mu]}^J$ consists of the shortest representatives of the cosets $wW_J$ with $ w\in W_{[\mu]} $. When $\mu$ is integral, we simply write $W^J:=W_{[\mu]}^J$ .

A weight $ \mu\in\mathfrak{h}^* $ is called \textit{anti-dominant} if $ (\mu+\rho, \alpha^\vee) \notin\mathbb{Z}_{>0}$ for all $ \alpha\in\Delta^+ $. For any $\lambda\in\mathfrak{h}^*$, there exists a unique anti-dominant weight $\mu\in\mathfrak{h}^*$ and a unique $w_{\lambda}\in W_{[\mu]}^J$ such that $\lambda=w_{\lambda}\cdot\mu:=w_{\lambda}(\mu+\rho)-\rho$. 

\begin{prop}[{\cite[Prop. 3.5]{hum}}]\label{anti}
    Let $\lambda\in \mathfrak{h}^*$, with corresponding root system $\Delta_{[\lambda]}$ and Weyl group $W_{[\lambda]}$. Let $\Pi_{[\lambda]}$ be the simple system of $\Delta_{[\lambda]} \cap \Delta^+$  in $\Delta_{[\lambda]}$.  Then $\lambda$ is antidominant if and only if one of the
following three equivalent conditions holds:
\begin{enumerate}
    \item $(\lambda+\rho, \alpha^\vee)\leq 0$ for all $\alpha \in \Pi_{[\lambda]}$;
    \item $\lambda\leq s_{\alpha}\cdot\lambda$ for all $\alpha \in \Pi_{[\lambda]}$;
    \item  $\lambda\leq w\cdot\lambda$ for all $w \in W_{[\lambda]}$.
\end{enumerate}
Therefore, there is a unique antidominant weight in the orbit $W_{[\lambda]}\cdot\lambda$.
\end{prop}

\begin{prop}[{\cite[Prop. 3.8]{BX}}]\label{pr:main1}
	Let $ \lam\in\mathfrak{h}^* $. Suppose that $\lambda=w_{\lambda}\cdot\mu$, where $\mu$ is anti-dominant and $w_{\lambda}\in W_{[\mu]}^J$. Then
\begin{equation*}
	\gkd L(\lambda)=|\Delta^+|-{\bf a}_{[\lambda]}(w_{\lambda}),
\end{equation*}
	where ${\bf a}_{[\lambda]}$ is the ${\bf a}$-function on $W_{[\lambda]}=W_{[\mu]}$.
\end{prop}

\subsection{Hecke algebra and \texorpdfstring{$\aff$}{}-functions}\label{sec:cell}

Recall that the Weyl group $ W  $ of $ \mathfrak{g} $ is a Coxeter group generated by $S:=\{s_\alpha \mid\alpha\in\Pi \}$. Let $\ell(-)$ be the length function on $W$ with respect to $S$. Then we have a Hecke algebra $ \mathcal{H} $ over $ \mathcal{A} :=\mathbb{Z}[q,q^{-1}]$, which is generated by $ T_w $, $ w\in W $ with relations \[
T_wT_{w'}=T_{ww'} \text{ if }\ell(ww')=\ell(w)+\ell(w'),
\]\[
\text{and }(T_s+q^{-1})(T_s-q)=0 \text{ for any }s\in S.
\]
The Kazhdan--Lusztig basis $C_w, w\in W$ of $ \mathcal{H} $ are characterized as the unique elements $ C_w \in \mathcal{H}$ such that
\[
\overline{C_w}=C_w,\qquad C_w\equiv T_w \mod{\mathcal{H}_{<0}}
\]where $ \bar{\,} :\mathcal{H}\rightarrow\mathcal{H}$ is the bar involution such that $ \bar{q}=q^{-1} $, $ \overline{T_w} =T_{w^{-1}}^{-1}$, and $ \mathcal{H}_{<0}=\bigoplus_{w\in W}\mathcal{A}_{<0}T_w $, $ \mathcal{A}_{<0}=q^{-1}\mathbb{Z}[q^{-1}] $.

If $ C_y $ occurs in the expansion of $ hC_w $ (resp. $C_wh$) with respect to the KL-basis for some $ h\in\mathcal{H} $, then we write $ y\leftarrow_L w $ (resp. $ y\leftarrow_R w $). Extend $ \leftarrow_L $ (resp. $ \leftarrow_R $) to a preorder $ \lest_L $ (resp. $\leq_R$) on $ W $. For $x, w\in W$, write $x \lest_{LR} w$ if there exist $x=w_1, \dots, w_n=w$ such that for every $1\lest i<n$ we have either $w_i\lest_L w_{i+1}$ or $w_i\lest_R w_{i+1}$. Let $\sim_{L}$, $\sim_{R}$, $\sim_{LR}$ be the equivalence relations associated with $\lest_L$, $\lest_R$, $\lest_{LR}$ (for example, $x\sim_{L}w$ if and only if $x\lest_L w$ and $w\lest_Lx$). The equivalence classes on $W$ for $\sim_L$, $\sim_R$, $\sim_{LR}$ are called \textit{left cells}, \textit{right cells} and \textit{two-sided cells} respectively. 



We have $ C_xC_y=\sum_{z\in W} h_{x,y,z}C_z $ with $ h_{x,y,x}\in\mathcal{A} $. Then Lusztig's \textit{$\aff$-function} $ \mathbf{a}:W\rightarrow\mathbb{N} $ is defined by\[
\aff(z)=\max\{\deg h_{x,y,z}\mid x,y\in W \} \text{ for } z\in W.
\]

The following lemma is an easy consequence of Lusztig's results \cite[14.2]{Lus03}.

\begin{lem}\label{alem1}
Let $x, w\in W$. Then
	\begin{itemize}\item [(1)]$ \aff(w)=\aff(w^{-1}) $.
		\item [(2)] If $x\lest_{LR} w$, then $\aff(x)\geq\aff(w)$. Hence $\aff(x)=\aff(w)$ whenever $x\sim_{LR} w$.
		\item [(3)] If $ w_I $ is the longest element of the parabolic subgroup of $ W $ generated by a subset $ I\subset S $, the $ \aff(w_I)$ is equal to  the length $\ell(w_I) $ of $ w_I $.
		\item [(4)] If $ W $ is a direct product of  Coxeter subgroups $ W_1 $ and $ W_2 $, then\[
		\aff(w)=\aff(w_1)+\aff(w_2)
		\]for  $ w=(w_1,w_2) \in W_1\times W_2=W$.
	\end{itemize}
\end{lem}

\subsection{GKdim of highest weight modules for classical types}

	For  a totally ordered set $ \Gamma $, we  denote by $ \mathrm{Seq}_n (\Gamma)$ the set of sequences $ x=(x_1,x_2,\cdots, x_n) $   of length $ n $ with $ x_i\in\Gamma $. In our paper, we usually take $\Gamma$ to be $\mathbb{Z}$ or a coset of $\mathbb{Z}$ in $\mathbb{C}$.
	Let $p(x)=(p_1,p_2,\cdots)$ be the shape of the Young tableau $P(x)$ obtained by applying Robinson-Schensted algorithm (\cite{BX1}) to $x\in \mathrm{Seq}_n (\Gamma)$, where $p_i$ is the number of boxes in the $i$-th row of the Young tableau $P(x)$.

	For a Young diagram $P$, use $ (k,l) $ to denote the box in the $ k $-th row and the $ l $-th column.
	We say the box $ (k,l) $ is \textit{even} (resp. \textit{odd}) if $ k+l $ is even (resp. odd). Let $ p_i ^{\ev}$ (resp. $ p_i^{\od} $) be the numbers of even (resp. odd) boxes in the $ i $-th row of the Young diagram $ P $.
	One can easily check that
	\begin{equation}\label{eq:ev-od}
	p_i^{\ev}=\begin{cases}
	\left\lceil \frac{p_i}{2} \right\rceil,&\text{ if } i \text{ is odd},\\
	\left\lfloor \frac{p_i}{2} \right\rfloor,&\text{ if } i \text{ is even},
	\end{cases}
	\quad p_i^{\od}=\begin{cases}
	\left\lfloor \frac{p_i}{2} \right\rfloor,&\text{ if } i \text{ is odd},\\
	\left\lceil \frac{p_i}{2} \right\rceil,&\text{ if } i \text{ is even}.
	\end{cases}
	\end{equation}
	Here for $ a\in \mathbb{R} $, $ \lfloor a \rfloor $ is the largest integer $ n $ such that $ n\leq a $, and $ \lceil a \rceil$ is the smallest integer $n$ such that $ n\geq a $. For convenience, we set
	\begin{equation*}
	p^{\ev}=(p_1^{\ev},p_2^{\ev},\cdots)\quad\mbox{and}\quad p^{\od}=(p_1^{\od},p_2^{\od},\cdots).
	\end{equation*}
	
	For $ x=(x_1,x_2,\cdots,x_n)\in \mathrm{Seq}_n (\Gamma) $, set
	\begin{equation*}
	\begin{aligned}
	{x}^-=&(x_1,x_2,\cdots,x_{n-1}, x_n,-x_n,-x_{n-1},\cdots,-x_2,-x_1).
	\end{aligned}
	\end{equation*}
	
	\begin{prop}[{\cite[Theorem 1.5]{BXX}}]\label{integral}
		Let $\lambda+\rho=(\lambda_1, \lambda_2, \cdots, \lambda_n)\in \mathfrak{h}^*$ be  integral. Then
		
		\[	{\rm GKdim}\:L(\lambda)=\left\{
		\begin{array}{ll}
		\dfrac{n(n-1)}{2}-\sum\limits_{i\ge 1}(i-1)p(\lambda+\rho)_i, &\textnormal{if}\:\Delta =A_{n-1},\\
		n^2-n-\sum\limits_{i\ge 1}(i-1)p((\lambda+\rho)^-)_i^{\ev}, &\textnormal{if}\:\Delta =D_{n}.
		\end{array}	
		\right.
		\]

	\end{prop}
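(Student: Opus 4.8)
The plan is geometric: express $\operatorname{GKdim}L(\lambda)$ as half the dimension of a nilpotent orbit, identify that orbit combinatorially through the Robinson-Schensted correspondence, and then convert the classical orbit-dimension formula into the stated tableau sum by an elementary counting identity. To begin, since $L(\lambda)$ is a highest weight module, $\operatorname{gr}L(\lambda)$ is a finitely generated $S(\bar{\mathfrak{n}})$-module and $\operatorname{GKdim}L(\lambda)=\dim\operatorname{AV}(L(\lambda))$. By Joseph's theorem on the variety of a highest weight module, $\operatorname{AV}(L(\lambda))$ is a union of orbital varieties, each of dimension $\tfrac12\dim\mathcal{O}_\lambda$, where $\mathcal{O}_\lambda$ is the nilpotent orbit whose closure is the associated variety of the primitive ideal $\Ann_{U(\frg)}L(\lambda)$. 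Hence $\operatorname{GKdim}L(\lambda)=\tfrac12\dim\mathcal{O}_\lambda$, and it remains to identify $\mathcal{O}_\lambda$ and to compute its dimension.

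For the identification, recall that when $\lambda+\rho$ is integral the primitive ideal $\Ann L(\lambda)$ is controlled by the Kazhdan-Lusztig cell of the Weyl-group element $w$ carrying $\lambda+\rho$ into the dominant chamber, and $\mathcal{O}_\lambda$ is the image of that cell under the Steinberg-Spaltenstein map. In type $A_{n-1}$ this is the nilpotent $\frgl_n$-orbit whose partition is the shape of the Robinson-Schensted insertion tableau of $w$; a short lemma, as in \cite{BX}, identifies this shape with that of the insertion tableau of the coordinate sequence of $\lambda+\rho$, i.e.\ with $p(\lambda+\rho)$. In type $D_n$ one works through the embedding $W(D_n)\hookrightarrow S_{2n}$: the relevant $S_{2n}$-cell is recorded by inserting the symmetrically doubled sequence $(\lambda+\rho)^-$, so $\mathcal{O}_\lambda$ is the nilpotent $\frso_{2n}$-orbit whose partition is the $D$-collapse of $p((\lambda+\rho)^-)$. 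No separate argument is needed for singular $\lambda$, where $\lambda+\rho$ has repeated coordinates, since Robinson-Schensted insertion and the cell description make sense for arbitrary sequences.

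For the dimension count one invokes the classical partition formulas. In $\frgl_n$, $\dim\mathcal{O}_d=n^2-\sum_j(d^{t}_j)^2$; writing $d^{t}_j=\#\{i:d_i\ge j\}$ one checks the identity $\sum_j(d^{t}_j)^2=n+2\sum_i(i-1)d_i$, whence $\tfrac12\dim\mathcal{O}_d=\tfrac{n(n-1)}{2}-\sum_i(i-1)d_i$, which is precisely the formula in type $A_{n-1}$ with $d=p(\lambda+\rho)$. In $\frso_{2n}$, $\dim\mathcal{O}_d=n(2n-1)-\tfrac12\big(\sum_j(d^{t}_j)^2-\#\{i:d_i\text{ odd}\}\big)$ for the collapsed orthogonal partition $d$; one must then show that halving this equals $n^2-n-\sum_i(i-1)\tilde d_i^{ev}$, where $\tilde d=p((\lambda+\rho)^-)$ is the \emph{uncollapsed} shape and $\tilde d_i^{ev}$ is the even-box count of \eqref{eq:ev-od}. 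This follows from the identity $\sum_j(d^{t}_j)^2-\#\{i:d_i\text{ odd}\}=2n+4\sum_i(i-1)d_i^{ev}$, valid for any orthogonal partition $d$ of $2n$ and obtained from the even/odd bookkeeping \eqref{eq:ev-od}, together with the observation that each elementary step of the $D$-collapse transports a box between two positions of the same parity and therefore leaves $\sum_i(i-1)d_i^{ev}$ invariant, so the computation may be carried out with the easily described uncollapsed shape $\tilde d$.

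The conceptual content of the first two steps is standard; the step I expect to be the main obstacle is the type $D$ combinatorics in the last step --- tracking the even-box statistic $\sum_i(i-1)d_i^{ev}$ under Robinson-Schensted insertion of the symmetrically doubled sequence and under the $D$-collapse, and matching it against the odd-parts correction term in the orthogonal orbit-dimension formula. It amounts to a finite induction on boxes built on \eqref{eq:ev-od}, but it is the part that demands genuine care.
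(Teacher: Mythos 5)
The paper itself offers no proof of Proposition \ref{integral}: it is imported verbatim from \cite{BXX}, so your argument can only be measured against that source, whose route (cell-theoretic computations with Lusztig's $a$-function rather than a direct orbit-dimension count with collapses) is different from the one you outline. Your type $A$ half is essentially fine: $\operatorname{GKdim}L(\lambda)=\tfrac12\dim\mathcal{O}_\lambda$ via Joseph, the identification of the orbit partition with $p(\lambda+\rho)$, and the identity $\sum_j(d^{t}_j)^2=|d|+2\sum_i(i-1)d_i$ all hold and reproduce the stated formula.

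The type $D$ half, however, has a genuine gap, and it sits exactly where you predicted the difficulty would be. Your ``elementary counting identity'' $\sum_j(d^{t}_j)^2-\#\{i: d_i \text{ odd}\}=2n+4\sum_i(i-1)d_i^{ev}$ is \emph{false} for general orthogonal partitions of $2n$: for $d=(3,2,2,1)$ (a legitimate nilpotent orbit of $\mathfrak{so}(8,\mathbb{C})$) one has $d^{t}=(4,3,1)$, so the left side is $26-2=24$ while the right side is $8+4\cdot 3=20$; correspondingly $\tfrac12\dim\mathcal{O}_{(3,2,2,1)}=8$ whereas the tableau expression gives $9$. The collapse-invariance lemma is also false as stated: the $D$-collapse of $(3,2,2,2,1)$ is $(3,2,2,1,1,1)$, an elementary step moving a box from the even position $(4,2)$ to the odd position $(6,1)$, and the statistic $\sum_i(i-1)d_i^{ev}$ drops from $10$ to $7$. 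What makes the proposition true anyway is that the orbits attached to annihilators of highest weight modules are \emph{special}, and the partition $(3,2,2,1)$ is precisely not special (its transpose is not symplectic); on the special orthogonal partitions the identity does appear to hold. But restricting to that class, proving the identity there, and --- the other soft spot --- actually justifying that $\mathcal{O}_\lambda$ is the $D$-collapse of $p((\lambda+\rho)^-)$ is not something you can wave through ``the embedding $W(D_n)\hookrightarrow S_{2n}$'': the type $D$ cell-to-orbit combinatorics (Garfinkle's domino tableaux, very even orbits, singular weights via translation) is exactly the delicate part, and \cite{BXX} avoids the naive collapse for this reason. As it stands, your outline assumes the hard content rather than supplying it.
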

	
	When $\lambda$ is non-integral,  we need some more notations before we give the algorithm of GK dimension.
	
	We define three functions $ F_A $, $ F_D $ as
	\begin{align*}
	F_A(x)&=\sum_{k\geq 1} (k-1) p_k,\\
	F_D(x)&=\sum_{k\geq 1} (k-1) p_k^{\ev},
	\end{align*}
	where $ p(x)=(p_1,p_2,\cdots) $ is the shape of the Young tableau $P(x)$.

	\begin{definition}
		Fix $ \lambda+\rho=(\lambda_1,\cdots,\lambda_n) \in \mathfrak{h}^*$.
		
		For $ \mathfrak g= \mathfrak{sl}(n, \mathbb{C})$,  we define $[\lambda]$ to be the set of  maximal subsequence $ x $ of  $ \lambda+\rho $ such that any two entries of $ x $ has an integral difference. For any $x\in [\lambda]$, we can get a Young tableua $P(x)$ by using the Robinson-Schensted
algorithm. We denote $S(\lambda)=\{P(x)\mid x\in [\lambda]\}$, which is a set of Young tableaux associated to $\lambda$.
		
		For $\mathfrak{g}= \sod $, we define $[\lambda] $ to be the set of  maximal subsequence $ x $ of  $ \lambda+\rho $ such that any two entries of $ x $ have an integral  difference or sum. In this case, we set $ [\lambda]_1 $ (resp. $ [\lambda]_2 $)  to be the subset of $ [\lambda] $ consisting of sequences with  all entries belonging to $ \mathbb{Z} $ (resp. $ \frac12+\mathbb{Z} $).
		Since there is at most one element in $[\lambda]_1 $ and $[\lambda]_2 $, we denote them by  $(\lambda+\rho)_{(0)}$ and $(\lambda+\rho)_{(\frac{1}{2})}$.
		We set $[\lambda]_{1,2}=[\lambda]_1\cup [\lambda]_2, \quad [\lambda]_3=[\lambda]\setminus[\lambda]_{1,2}$.
		
	\end{definition}

	\begin{definition}
		Let $\lambda\in\mathfrak{h}^{\ast}$ and $ x=(\lambda_{i_1}, \lambda_{i_2},\cdots \lambda_{i_r})\in[\lambda]_3 $. Let $  y=(\lambda_{j_1}, \lambda_{j_2},\cdots, \lambda_{j_p}) $ be the maximal subsequence of $ x $ such that $ j_1=i_1 $ and the difference of any two entries of $ y$ is an integer. Let $ z= (\lambda_{k_1}, \lambda_{k_2},\cdots, \lambda_{k_q}) $ be the subsequence obtained by deleting $ y$ from $ x $, which is possible empty.
		Define
		$$  \tilde{x}=(\lambda_{j_1}, \lambda_{j_2},\cdots, \lambda_{j_p}, -\lambda_{k_q}, -\lambda_{k_{q-1}},\cdots,-\lambda_{k_1}).  $$
	\end{definition}

	\begin{prop}[{\cite[Theorem 4.6]{BX1}} and {\cite[Theorem 5.7]{BXX}} ]\label{GKdim}
		The GK dimension of  $ L(\lambda) $  can be computed as follows.
		\begin{enumerate}
			\item If $  \mathfrak{g}= \mathfrak{sl}(n,\mathbb{C})$,
			\[
			\gkd L(\lambda)=\frac{n(n-1)}{2}-\sum _{x\in [\lambda]} F_A(x).
			\]

			\item  If $  \mathfrak{g} = \mathfrak{so}(2n,\mathbb{C}) $,
			\[
			\gkd L(\lambda)=n^2-n-F_D((\lambda+\rho)_{(0)}^-)-F_D((\lambda+\rho)_{(\frac{1}{2})}^-)-\sum _{x\in [\lambda]_3} F_A(\tilde{x}).
			\]
		\end{enumerate}
	\end{prop}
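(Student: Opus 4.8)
The plan is to translate the reducibility question into a Gelfand--Kirillov dimension computation via Lemma~\ref{reducible}, and then to evaluate that dimension explicitly using the Robinson--Schensted algorithms of Propositions~\ref{integral} and~\ref{GKdim}. By the computation of $\dim(\fru)$ recorded in Section~2, every non-maximal two-step nilpotent parabolic of $\mathfrak{so}(2n,\mathbb{C})$ has $\dim(\fru)=\frac{1}{2}(n^2+n-2)$. Since $L(\lambda)$ is a quotient of $M_I(\lambda)$ we always have $\GK L(\lambda)\le\dim(\fru)$, so Lemma~\ref{reducible} says that $M_I(\lambda)$ is reducible if and only if $\GK L(\lambda)<\frac{1}{2}(n^2+n-2)$, which after inserting Proposition~\ref{GKdim}(2) and rearranging becomes
\[
F_D\big((\lambda+\rho)_{(0)}^-\big)+F_D\big((\lambda+\rho)_{(\frac{1}{2})}^-\big)+\sum_{x\in[\lambda]_3}F_A(\tilde x)>\frac{1}{2}(n-1)(n-2).
\]
So the first task is bookkeeping: with $\rho=(n-1,n-2,\dots,1,0)$ and the spin fundamental weights $\xi_{n-1},\xi_{n}$ one writes $\lambda+\rho$ in coordinates for the three admissible pairs. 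For $(p,q)=(1,n-1)$ one gets $(z_1+\frac{z_2}{2}+n-1,\ \frac{z_2}{2}+n-2,\dots,\frac{z_2}{2}+1,\ -\frac{z_2}{2})$; for $(p,q)=(1,n)$ one replaces the last entry $-\frac{z_2}{2}$ by $+\frac{z_2}{2}$; and for $(p,q)=(n-1,n)$ one gets $(\frac{z_1+z_2}{2}+n-1,\dots,\frac{z_1+z_2}{2}+1,\ \frac{z_2-z_1}{2})$. In every case all coordinates but the first one or two form an arithmetic progression of common difference $1$, which is what keeps the tableaux manageable.

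The second task is the case analysis, organised by the pair $(p,q)$ and then by the arithmetic type of $z_1,z_2$. For type $D$ one must record not only whether $z_1,z_2\in\mathbb{Z}$, but also whether $z_1\pm z_2\in\mathbb{Z}$, and whether the coordinates of $\lambda+\rho$ land in $\mathbb{Z}$ or in $\frac{1}{2}+\mathbb{Z}$, because this is exactly what determines how $\lambda+\rho$ breaks into the maximal subsequences that form $[\lambda]_1$, $[\lambda]_2$, $[\lambda]_3$. One observation that collapses the $(1,n-1)$ and $(1,n)$ cases into a single criterion: in the $(1,n-1)$ case the last coordinate $-\frac{z_2}{2}$ has integral sum with every coordinate of the bulk progression, hence always lies in the same $[\lambda]$-subsequence as the bulk, so the two cases end up producing the same tableaux. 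In each subcase I would form the sequences $(\lambda+\rho)_{(0)}^-$, $(\lambda+\rho)_{(\frac{1}{2})}^-$, and the $\tilde x$ for $x\in[\lambda]_3$, determine their Robinson--Schensted shapes --- these shapes can be read off from the lengths of the longest increasing and decreasing subsequences (Greene's theorem) rather than by running the insertion by hand, since each such sequence is a short concatenation of strictly decreasing runs --- and then evaluate $F_D$ and $F_A$. Feeding these into Proposition~\ref{integral} (when $\lambda+\rho$ is integral) or Proposition~\ref{GKdim} and comparing with $\frac{1}{2}(n^2+n-2)$ yields a piecewise reducibility inequality in $z_1,z_2,n$.

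The final task is to repackage these inequalities into the stated form, and Lemma~\ref{GKdown} carries most of the weight here: because $M_I(\lambda)$ reducible forces $M_I(\lambda+\xi_p)$ and $M_I(\lambda+\xi_q)$ reducible, the reducible locus inside $\{\,z_1\xi_p+z_2\xi_q\,\}$ is stable under $z_1\mapsto z_1+1$ and $z_2\mapsto z_2+1$. Together with the finitely many explicit Gelfand--Kirillov dimension computations at the threshold points, this produces exactly the half-lines ``$-c+\mathbb{Z}_{\ge0}$'' and the union structure appearing in the theorem, and it reduces the irreducibility (``only if'') direction to checking $\GK L(\lambda)=\dim(\fru)$ at the maximal elements of the complement. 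I expect the genuinely delicate step to be the shape computations for $(\lambda+\rho)^-$ and $\tilde x$: each of these is a concatenation of a strictly decreasing run with (the negatives of) another strictly decreasing run, so its Robinson--Schensted shape changes discontinuously as soon as an entry of the progression crosses $0$; tracking which run straddles $0$ and how many even boxes the resulting shape carries is precisely what produces the constants $-n+1,-n+2,-n+3,-n+4$ and the parity-of-$n$ dichotomy, and nothing else in the argument is harder than that.
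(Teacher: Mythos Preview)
Your proposal does not address the stated Proposition~\ref{GKdim} at all. That proposition is a cited result (Theorem~4.6 of \cite{BX1} and Theorem~5.7 of \cite{BXX}) giving a combinatorial formula for $\gkd L(\lambda)$; the paper does not prove it, it quotes it. What you have written is instead a proof outline for the main theorem of Section~5, the reducibility criterion for scalar generalized Verma modules of $\mathfrak{so}(2n,\mathbb{C})$, and in that outline you are \emph{using} Proposition~\ref{GKdim} as a black box rather than establishing it. There is no argument here for why the formula
\[
\gkd L(\lambda)=n^2-n-F_D\big((\lambda+\rho)_{(0)}^-\big)-F_D\big((\lambda+\rho)_{(\frac{1}{2})}^-\big)-\sum_{x\in[\lambda]_3}F_A(\tilde x)
\]
holds; proving it would require the associated-variety machinery and the Lusztig $a$-function analysis developed in \cite{BX1,BXX}, none of which appears in your sketch.

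If the intended target was indeed the Section~5 theorem, then your outline matches the paper's approach essentially step for step: reduce reducibility to $\gkd L(\lambda)<\dim(\fru)$ via Lemma~\ref{reducible}, write $\lambda+\rho$ explicitly in coordinates, split into cases according to the integrality type of $z_1,z_2$ (which controls the partition $[\lambda]=[\lambda]_{1,2}\cup[\lambda]_3$), compute the tableau shapes and the resulting $F_A,F_D$ values, and use Lemma~\ref{GKdown} to propagate reducibility from boundary points. The paper carries this out by direct case-by-case insertion rather than invoking Greene's theorem, but the logic is the same.
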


\begin{Rem}From Proposition \ref{GKdim}, we know that the GK dimension of a highest weight module $L(\lambda)$ only depends on the shape of some Young tableaux associated to $\lambda$. So in this paper, 	we write \[\tiny{\begin{tikzpicture}[scale=\domscale+0.1,baseline=-19pt]
			\hobox{0}{0}{2}
			\hobox{0}{1}{4}
			\hobox{0}{2}{1}
			\hobox{1}{0}{3}
	\end{tikzpicture}}:=\tiny{\begin{tikzpicture}[scale=\domscale+0.1,baseline=-19pt]
	\hobox{0}{0}{\lambda_2}
	\hobox{0}{1}{\lambda_4}
	\hobox{0}{2}{\lambda_1}
	\hobox{1}{0}{\lambda_3}
	\end{tikzpicture}}\]
	to represent our Young tableau $Y$, where the element $i$ in the Young tableau $Y$ represents the $i$-th component of $\lambda+\rho$ . For example, the Young tableau for $\lambda+\rho=(5,3,3,1)$ is 	\[Y=
		\tiny{\begin{tikzpicture}[scale=\domscale+0.1,baseline=-19pt]
	\hobox{0}{0}{1}
	\hobox{0}{1}{3}
	\hobox{0}{2}{5}
	\hobox{1}{0}{3}
	\end{tikzpicture}}.
	\]

We will use \[\tiny{\begin{tikzpicture}[scale=\domscale+0.1,baseline=-19pt]
	\hobox{0}{0}{4}
	\hobox{0}{1}{2}
	\hobox{0}{2}{1}
	\hobox{1}{0}{3}
	\end{tikzpicture}}\]
to represent our Young tableau $Y$.
\end{Rem}

\subsection{GKdim of highest weight modules for exceptional types}
First we recall the method of finding out the minimal length element $w_{\lambda}$ for an integral weight $\lambda$ such that $w_{\lam}^{-1}\cdot\lam=\mu$  is antidominant.

 Let $\Pi=\{\alpha_i \mid  1\leq i\leq n\}$ be the simple roots of an exceptional type root system $\Delta$ with corresponding fundamental weights $\{\xi_i\mid 1\leq i\leq n\}$. 
The Cartan matrix is defined by
$A=(A_{ij})_{n\times n}$ with $A_{ij}=(\alpha_i, \alpha_j^\vee)$ and $A_{ij}\in \{0,-1,-2,-3\}$ for $i \neq j$.
 Then  $\alpha_j=\sum\limits_{k=1}^n A_{jk}\xi_k$. For any $\lam\in \mathfrak{h}^*$, we can write $\lam+\rho=\sum\limits_{1\leq i\leq n}k_i\xi_i$, where $k_i=(\lam+\rho, \alpha_i^{\vee})$. Then an integral weight $\lam$ is  antidominant  if and only if   $k_i\in \mathbb{Z}_{\leq 0}$ for  all $1\leq i\leq n$.

Now we want to find the $w_{\lam}$ for an integral weight $\lambda$. When $\lam$ is antidominant (resp. dominant), $w_{\lam}=\mathrm{Id}$ (resp. $-\mathrm{Id}$). Suppose $\lam$ is not antidominant, then there exist some $k_i\in \mathbb{Z}_{>0}$. We choose the largest index  $i_{\lam}={i_1}$ such that $k_{i_1}\in \mathbb{Z}_{>0}$. Denote the simple reflection $s_{\alpha_i}$ by $s_i$. Then we have $$s_{i_1}(\lam+\rho)=\lam+\rho-k_{i_1}\alpha_{i_1}=\lam+\rho-k_{i_1}\sum\limits_{j=1}^n A_{i_1j}\xi_j=\lam+\rho-2k_{i_1}\xi_{i_1}-k_{i_1}\sum\limits_{j=1,j\neq i_1}^n A_{i_1j}\xi_j.$$
 Then the coefficient of $\xi_{i_1}$ in $s_{i_1}\lam$ becomes $-k_{i_1}$. 

The above process is called the \emph{positive  reduction algorithm}.

\begin{lem}[{\cite[Lem. 3.1]{do23}}]\label{w-lambda}
    For any integral weight $\lam$ (regular or singular),   we can get an antidominant weight $\mu$ after finite steps of  the positive reduction algorithm. We multiply all the $s_i$ appeared in these steps and get a $w_{\lambda}$. Then this $w_{\lambda}$ has the minimal length in $W$ such that $w_{\lambda}^{-1}\cdot\lam$ is antidominant.
\end{lem}

To compute the ${\bf a}$ value of the Weyl group elements of an exceptional type, we use the computer algebra package PyCox. Some more details about PyCox can be found in \cite{ge}. For example, the function `klcellrepem'  in PyCox will give us the 
${\bf a}$ value of $w \in W$. Note that $s_is_j \cdots s_k$ is denoted by $[i-1,j-1,\cdots,k-1]$ in PyCox for type $E_6$.

\begin{example}
    Let  $\lam+\rho=(-1,1,-1,-1,-1,1,1,-1)$ be  an integral weight of type $E_6$. We can write $\lam+\rho=-\xi_1+2\xi_3-2\xi_4:=[-1,0,2,-2,0,0]$. The largest index of $\lam+\rho$ with a positive element is $i_1=3$. Recall that $\alpha_3=-\xi_{1}+2\xi_{3}-\xi_{4}=:[-1,0,2,-1,0,0]$. Thus $s_{3}(\lam+\rho)=\lam+\rho-2\alpha_3=[1,0,-2,0,0,0]$.  The largest index of $s_3(\lam+\rho)$ with a positive element is $i_2=1$. Recall that  $\alpha_1=2\xi_{1}-\xi_3$. Thus $s_1s_{3}(\lam+\rho)=s_{3}(\lam+\rho)-\alpha_1=[-1,0,-1,0,0,0]$, which is already antidominant.    Thus we have $w_{\lam}=s_3s_{1}:=[2,0]$ in PyCox.
    Then by using PyCox, we have:
   \begin{verbatim}
    >>> W = coxeter("E", 6)
    >>> print(klcellrepelm(W, [2,0]))
   {'size': 6, 'character': [['6_p', 1]], 'a': 1, 'special': '6_p',
   'index': 2, 'elms': False, 'distinv': False}
\end{verbatim}
Then we know that ${\bf a}(w_{\lambda})=1$.
\end{example}

 When $\lambda$ is non-integral, we recall the algorithm in \cite{BGWX}.

For a root system $\Delta$, we use $\mathfrak{h}_{\Delta}$ to denote the corresponding Cartan subalgebra of the Lie algebra $\mathfrak{g}_\Delta$ associated with $\Delta$. For a simple root $\alpha$, we use $\xi_{\alpha}$ to denote the fundamental weight corresponding to it. 
We use $E_{\Delta}$ to denote the $\mathbb{R}$ linear span of $\Delta$ in $\mathfrak{h}_{\Delta}^*$. 


Recall that we use $\Delta_{[\lam]}$ to denote the integral root system for $\lam\in \mathfrak{h}_{\Delta}^*$ and $\Pi_{[\lambda]}$ to denote the simple system of  $\Delta_{[\lambda]}$ lying in $\Delta_{[\lambda]} \cap \Delta^+$.

Then we have the following algorithm to compute ${\bf a}(w_{\lam})$:

\begin{enumerate}
    \item[(1)] suppose $\Delta_{[\lam]}\simeq \Delta_1\times \Delta_2\times \cdots\times \Delta_k$ is a direct product of irreducible root systems. We use $\phi$ to denote this isomorphism.  Then $\phi$ induces an isomorphism from $E_{\Delta_{[\lam]}}$ to $\prod_{1\leq i\leq k} E_{\Delta_i}$. Suppose the simple system of $\Delta_{i}$ is $\Pi_i$ for $1\leq i\leq k$ and   the simple system of $\Delta_{[\lam]}$ is $\Pi_{[\lam]}=\{\alpha_1,\cdots,\alpha_n\}=\prod_{1\leq i\leq k}\Delta_{[\lam]_i}$, where $\Delta_{[\lam]_i}$ corresponds to $\Delta_i$.  Write $\lambda|_{\mathfrak{h}_{\Delta_{[\lambda]}}^*}=\sum_{\alpha_i\in \Pi_{[\lam]}}k_i\xi_i$, where $k_i=\langle \lam, \alpha_i^{\vee}\rangle$;
    \item[(2)] denote $\bar{\lam}'=\phi(\lam|_{\mathfrak{h}_{\Delta_{[\lambda]}}^*})=\sum_{\alpha_i\in \Pi_{[\lam]}}k_i\phi(\xi_i)$;
    \item[(3)]  when $\Delta_i$ is of classical type, denote $$\bar{\lam}'|_{\Delta_i}=\sum_{\alpha_{i_t}\in \Pi_{[\lam]_i}}k_{i_t}\phi(\xi_{i_t})=(\lam'_{1},\lam'_{2},\cdots,\lam'_{d_i}),$$ 
    where $d_i=\dim E_{\Delta_i}$.
    Then we can get the value of ${\bf a}(w_{\bar{\lam}'|_{{\Delta_i}}})$ without finding out $w_{\bar{\lam}'|_{{\Delta_i}}}$ by using the RS algorithm in \cite{BXX}. When some $\Delta_i$ is of exceptional type, we can find $w_{\bar{\lam}'|_{{\Delta_i}}}$ by using the algorithm in Lemma \ref{w-lambda}, and then ${\bf a}(w_{\bar{\lam}'|_{{\Delta_i}}})$ is given by using PyCox;
     \item[(4)] ${\bf a}(w_{\lam})=\sum_{1\leq i\leq k} {\bf a}(w_{\bar{\lam}'|_{{\Delta_i}}})$.
\end{enumerate}

By using Proposition \ref{pr:main1}, we can get \begin{equation*}
	\gkd L(\lambda)=|\Delta^+|-{\bf a}(w_{\lambda}).
\end{equation*}
From now on, we denote ${\bf a}({\lambda}):={\bf a}(w_{\lambda})$.

\section{Reducibility of scalar  generalized Verma modules for type $A_{n-1}$}

First we recall a useful result which will be used in our proof. 

\begin{prop} [{\cite[Theorem 1.3 $\&$ Theorem 5.1]{BZ}} or {\cite[Theorem 5.5]{Br}}]\label{mthm} 
Let $\mathfrak{g}=\mathfrak{sl}(n,\mathbb{C})$.
Suppose the parabolic subalgebra $\mathfrak{p}$ corresponds to two simple roots $\{\alpha_p, \alpha_q\}$.  A simple  module $L(\lambda)$ (with $\lambda=z_1\xi_p+z_2\xi_q$  and $\xi_p$ (resp. $\xi_q$) being the fundamental weight corresponding to the simple root $\alpha_p$ (resp. $\alpha_q$)) has $\gkd L(\lambda)=\dim (\mathfrak{u})$ if and only if $S(\lambda)$ consists of some Young tableaux whose numbers of boxes in their columns are  $\{c_1,c_2,c_3\}=\{p, q-p, n-q\}$.
\end{prop}
In the following, we will give the reducible points for type $A$. Since we're talking about two-step nilpotent  parabolic subalgebras, we need to compute the $GK$ dimension of the irreducible quotient $L(\lambda)$ of the scalar generalized Verma module 
$M_I(\lambda)$ with $\lambda=z_1\xi_p+z_2\xi_q$.

	Let $\mathfrak{g}=\mathfrak{sl}(n,\mathbb{C})$, and $\Delta^+(\frl) = \{\alpha_1,\dots,\alpha_{p-1},\alpha_{p+1},\dots,\alpha_{q-1},\alpha_{q+1},\dots,\alpha_{n-1}\}$, $\alpha_p=e_p-e_{p+1}$, $\alpha_{q}=e_q-e_{q+1}$.

We have
$$\xi_p=e_1+\cdots+e_p-\frac p n(e_1+\cdots+e_n),$$
$$\xi_q=e_1+\cdots+e_q-\frac q n(e_1+\cdots+e_n).$$

Denote $t=\frac{n-p}n$, $s=\frac{n-q}n$, then we have

\begin{center}
    $\xi_p=(\underbrace{t,\cdots, t}_p,\underbrace{t-1,\cdots, t-1}_{n-p})$, $\xi_q=(\underbrace{s,\cdots,s}_q,\underbrace{s-1,\cdots,s-1}_{n-q})$.
\end{center}

From \cite{EHW}, we have
$$\rho=(\frac{n-1}{2},\frac{n-3}{2},\dots,\frac{-n+3}{2},\frac{-n+1}{2}),$$
\begin{equation}\label{z1-z2}
	\begin{split}
		\lambda+\rho=(&\underbrace{z_1t+z_2s+\frac{n-1}{2},\dots,z_1t+z_2s+\frac{n-2p+1}{2}}_{p},
		\\&\underbrace{(t-1)z_1+z_2s+\frac{n-2p-1}{2},\dots,(t-1)z_1+z_2s+\frac{n-2q+1}{2}}_{q-p},
		\\&\underbrace{(t-1)z_1+(s-1)z_2+\frac{n-2q-1}{2},\dots,(t-1)z_1+(s-1)z_2+\frac{-n+1}{2}}_{n-q}).
	\end{split}
\end{equation}
We denote $\lambda+\rho=(\lambda_1,\lambda_2,\cdots,\lambda_n)$.

Since there are two parameters, we will divide our arguments into the following two cases.
\begin{enumerate}
	\item  $z_1=z_2$;
	\item  $z_1\neq z_2$.
	
\end{enumerate}

Suppose $M_{I}(\lambda)$ is a scalar generalized Verma module with highest weight $\lambda=z_1 \xi_p+z_2\xi_q$.
By Lemma \ref{GKdown}, if $(z_1, z_2)\in \mathbb{C}^2$ is a reducible point for $M_{I}(\lambda)$, then $(z_1+k,z_2+d)$ is also a reducible point for any $k,d\in \mathbb{Z}_{\geq 0}$. When $(z_1,z_2)$ is a reducible point of $M_{I}(\lambda)$ but $(z_1-1,z_2)$ or $(z_1,z_2-1)$ is not a reducible point, we will call $(z_1,z_2)$ {\it a boundary reducible point} of $M_{I}(\lambda)$. When $(z_1,z_1)$ is a reducible point but  $(z_1-1,z_1-1)$ is not a  reducible point, this boundary reducible point $(z_1,z_1)$ is also called {\it a first reducible point} of $M_{I}(\lambda)$ since it will be real and minimal in the diagonal line. See \cite{BJ}.

\subsection{Reducibility when $z_1=z_2$}

When $z_1=z_2$, we denote $z=z_1=z_2$.
Then 	
\begin{equation}\label{z1z2}
\begin{split}
\lambda+\rho=(&\underbrace{(s+t)z+\frac{n-1}{2},\dots,(s+t)z+\frac{n-2p+1}{2}}_{p},
\\&\underbrace{(s+t-1)z+\frac{n-2p-1}{2},\dots,(s+t-1)z+\frac{n-2q+1}{2}}_{q-p},
\\&\underbrace{(s+t-2)z+\frac{n-2q-1}{2},\dots,(s+t-2)z+\frac{-n+1}{2}}_{n-q}).
\end{split}
\end{equation}
We have $\lambda_p-\lambda_{p+1}=z+1$, $\lambda_p-\lambda_{q+1}=2z+1+q-p$ and $\lambda_q-\lambda_{q+1}=z+1$.

\begin{lem} 
		Let $ \mathfrak{g}=\mathfrak{sl}(n,\mathbb{C}) $. $M_{I}(\lambda)$ is a scalar generalized Verma module with highest weight $\lambda=z (\xi_p+\xi_q)$. We write $q-p=k$, $(k\ge 1)$, $m=\min\{p,n-q\}$, $h=\max\{p,n-q\}$.
  Suppose $z\notin \mathbb{Z}$. 
        Then $M_{I}(\lambda)$ is reducible if and only if   $m\ge 1$ and $z\in (\frac{1}{2}+\mathbb{Z})\bigcap (-\frac{k+m}{2},+\infty)$.

\end{lem}

\begin{proof}

 When $z\notin \frac{1}{2} +\mathbb{Z}$, 
from equation (\ref{z1z2}) we know that $[\lambda]$ consists of three elements and $\lambda+\rho$ corresponds to three Young tableaux and these three Young tableaux have only one column. The numbers of boxes in these three columns are $\{p, q-p,n-q\}$. 
Thus $M_I(z(\xi_p+\xi_q))$ is irreducible by Lemma \ref{reducible} and Proposition \ref{mthm}.

 When $z\in \frac{1}{2} +\mathbb{Z}$, equivalently $\lambda_p-\lambda_{q+1}\in \mathbb{Z}$, from equation (\ref{z1z2}) we know that $[\lambda]$ consists of two elements and  $\lambda+\rho$ corresponds to two Young tableaux. One of them  has one column and the number of boxes in this column is $q-p$. The other Young tableau may have two columns. We denote it by $P_2$.

Recall that we denote $m=\min\{p,n-q\}$, $h=\max\{p, n-q\}$. Then we have the follows.

\begin{itemize}
    \item[1)] If $m=0$ (equivalently $n-q=0$), the  Young tableau $P_2$ has one column and the number of boxes in this column is $p$, thus $M_I(z(\xi_p+\xi_q))$ is irreducible  by Lemma \ref{reducible} and Proposition \ref{mthm}. 
    \item[2)] If $m\ge 1$,  the Young tableau $P_2$ may have two columns. 

Firstly, 
when $z>-\frac{k+1}{2}=-\frac{q-p+1}{2}$, equivalently $\lambda_p>\lambda_{q+1}$,   the  Young tableau $P_2$ has one column and the number of boxes in this column is $n-q+p$. So $M_I(z(\xi_p+\xi_q))$ is reducible by Lemma \ref{reducible} and Proposition \ref{mthm}.

Secondly, when $-\frac{k+1}{2}\ge z>-\frac{k+m}{2}$, equivalently $\lambda_p\le \lambda_{q+1}$, $\lambda_p>\lambda_{n}$ (if $m=n-q$) or $\lambda_1>\lambda_{q+1}$ (if $m=p$), the  Young tableau $P_2$ has two columns and the numbers of boxes in these columns are $p+r$ and $n-q-r$, where $0<r<n-q$. So $M_I(z(\xi_p+\xi_q))$ is reducible by Lemma \ref{reducible} and Proposition \ref{mthm}.

When $z=-\frac{k+m}{2}$,   the  Young tableau $P_2$ has two columns and the numbers of boxes in these columns are $p$ and $n-q$, so $M_I(z(\xi_p+\xi_q))$ is irreducible by Lemma \ref{reducible} and Proposition \ref{mthm}.
\end{itemize}

Thus when $z\notin \mathbb{Z}$,   $M_I(z(\xi_p+\xi_q))$ is reducible if and only if $m\geq 1$ and  $z\in (\frac{1}{2}+\mathbb{Z})\bigcap (-\frac{k+m}{2},+\infty)$.

\end{proof}

\begin{lem} \label{m=0}
	Keep notations as above.
  Suppose $z\in \mathbb{Z}$ and $m=\min\{p,n-q\}=0$. 
        Then $M_{I}(\lambda)$ is reducible if and only if  \begin{center}
			    $z \in -\min\{h,k\}+1 + \mathbb{Z}_{\ge 0}$.
			\end{center}

\end{lem}

\begin{proof}

 When $z\in \mathbb{Z}$, $\lambda+\rho$ corresponds to one Young tableau $P(\lambda+\rho)$.

By Lemma \ref{GKdown}  we only need to find out the first reducible point for  $M_I(z(\xi_p+\xi_q))$ since any point larger than it will be a reducible point.

When $z>-1$, the Young tableau $P(\lambda+\rho)$ has only one column, so $M_I(z(\xi_p+\xi_q))$ is reducible by Lemma \ref{reducible} and Proposition \ref{mthm}.

When $z\le-1$, the Young tableau $P(\lambda+\rho)$ may consist of  two or three columns. Let's talk about the reducibility in a
case-by-case way.


\begin{enumerate}
    \item[1)] If $k=q-p=1$, we have $\{p, q-p, n-q\}=\{p, 1, 0\}$ since $m=\min\{p,n-q\}=0$ implies that $n-q=0$. Thus $p=n-1$.

	When $z=-1$, by using R-S algorithm we will have\[P(\lambda +\rho)=
	\tiny{\begin{tikzpicture}[scale=\domscale+0.25,baseline=-30pt]
			\hobox{0}{0}{n-1}
   \hobox{1}{0}{n}
			\hobox{0}{1}{\vdots}
			\hobox{0}{2}{1}
	\end{tikzpicture}}.
	\]
	
 Then by Lemma \ref{reducible} and Proposition \ref{mthm}, $z=-1$ is an irreducible point.

When $z=0$,  the Young tableau $P(\lambda+\rho)$ will consist of  one column. Thus $z=0$ is the first reducible point when $k=1$.

\item[2)]  If $k=q-p=2$, we have $\{p, q-p, n-q\}=\{p, 2, 0\}$ since $m=\min\{p,n-q\}=0$ implies that $n-q=0$. Thus $p=n-2$.

\begin{enumerate}
\item If $p=1$ and
 $z=-1$, we will have
\[
 \tiny{\begin{tikzpicture}[scale=\domscale+0.25,baseline=-15pt]
       \hobox{0}{0}{1}
       \end{tikzpicture}}\to
   \tiny{\begin{tikzpicture}[scale=\domscale+0.25,baseline=-15pt]
   		\hobox{0}{0}{1}
   		\hobox{1}{0}{2}
   \end{tikzpicture}}\to
\tiny{\begin{tikzpicture}[scale=\domscale+0.25,baseline=-15pt]
       \hobox{0}{1}{1}
       \hobox{1}{0}{2}
       \hobox{0}{0}{3}
       \end{tikzpicture}}=P(\lambda +\rho).
\]
Then by Lemma \ref{reducible} and Proposition \ref{mthm}, $z=-1$ is an irreducible point.
So $z=0$  is the first reducible point in this case.

\item If $p\ge 2$ and $z=-1$, we will have
	\[
	\tiny{\begin{tikzpicture}[scale=\domscale+0.25,baseline=-30pt]
			\hobox{0}{0}{n-2}
			\hobox{0}{1}{\vdots}
			\hobox{0}{2}{1}
	\end{tikzpicture}}\to
	\tiny{\begin{tikzpicture}[scale=\domscale+0.25,baseline=-30pt]
			\hobox{0}{0}{n-2}
			\hobox{0}{1}{\vdots}
			\hobox{0}{2}{1}
			\hobox{1}{0}{n-1}
	\end{tikzpicture}}\to
	\tiny{\begin{tikzpicture}[scale=\domscale+0.25,baseline=-30pt]
			\hobox{1}{0}{n-1}
			\hobox{0}{0}{n}
			\hobox{0}{1}{n-2}
			\hobox{0}{2}{\vdots}
			\hobox{0}{3}{1}
	\end{tikzpicture}}=P(\lambda +\rho).
	\]
	Then by Lemma \ref{reducible} and Proposition \ref{mthm},  $z=-1$ is a reducible point.
 
	 If $p\ge 2$ and  $z=-2$, we will have
	\[
	\tiny{\begin{tikzpicture}[scale=\domscale+0.25,baseline=-30pt]
			\hobox{0}{0}{n-2}
			\hobox{0}{1}{\vdots}
			\hobox{0}{2}{1}
	\end{tikzpicture}}\to
	\tiny{\begin{tikzpicture}[scale=\domscale+0.25,baseline=-30pt]
			\hobox{0}{0}{n-2}
			\hobox{0}{1}{\vdots}
			\hobox{0}{2}{1}
			\hobox{1}{0}{n-1}
	\end{tikzpicture}}\to
	\tiny{\begin{tikzpicture}[scale=\domscale+0.25,baseline=-30pt]
			\hobox{0}{0}{n-2}
			\hobox{0}{1}{\vdots}
			\hobox{0}{2}{1}
			\hobox{1}{0}{n}
			\hobox{1}{1}{n-1}
	\end{tikzpicture}}=P(\lambda +\rho).
	\]
	Then by Lemma \ref{reducible} and Proposition \ref{mthm},
$z=-2$ is an irreducible point.

So $z=-1$ is the first reducible point when $p\ge 2$. 

\end{enumerate}



 Thus $z=-\min \{p,k\}+1 $ is the first reducible point when $k=2$.

\item[3)]  If $k>2$, we have $\{p, q-p, n-q\}=\{p, q-p, 0\}$ since $m=\min\{p,n-q\}=0$ implies that $n-q=0$. 
\begin{enumerate}
 \item When $z=-p+1$ (if $p<k=n-p$) and $p<t=2p-1<n$, we will have
 \[
 \tiny{\begin{tikzpicture}[scale=\domscale+0.25,baseline=-30pt]
			\hobox{0}{0}{p}
			\hobox{0}{1}{\vdots}
			\hobox{0}{2}{2}
            \hobox{0}{3}{1}
	\end{tikzpicture}}\to
	\tiny{\begin{tikzpicture}[scale=\domscale+0.25,baseline=-30pt]
			\hobox{0}{0}{p}
			\hobox{0}{1}{\vdots}
			\hobox{0}{2}{2}
            \hobox{0}{3}{1}
            \hobox{1}{0}{t}
            \hobox{1}{1}{\vdots}
            \hobox{1}{2}{p+1}
	\end{tikzpicture}}\to
	\tiny{\begin{tikzpicture}[scale=\domscale+0.25,baseline=-30pt]
			\hobox{0}{0}{t+1}
			\hobox{0}{1}{p}
			\hobox{0}{2}{\vdots}
            \hobox{0}{3}{1}
            \hobox{1}{0}{t}
            \hobox{1}{1}{\vdots}
            \hobox{1}{2}{p+1}
	\end{tikzpicture}}\to
	\tiny{\begin{tikzpicture}[scale=\domscale+0.25,baseline=-30pt]
			\hobox{0}{0}{n}
			\hobox{0}{1}{\vdots}
            \hobox{0}{2}{t+1}
			\hobox{0}{3}{\vdots}
            \hobox{0}{4}{1}
            \hobox{1}{0}{t}
            \hobox{1}{1}{\vdots}
            \hobox{1}{2}{p+1}
	\end{tikzpicture}}=P(\lambda +\rho).
	\]
Then by Lemma \ref{reducible} and Proposition \ref{mthm}, $z=-p+1$ is a reducible point.

\item When $z=-k+1$ (if $k\le p$), we will have
 \[
 \tiny{\begin{tikzpicture}[scale=\domscale+0.25,baseline=-30pt]
			\hobox{0}{0}{p}
			\hobox{0}{1}{\vdots}
			\hobox{0}{2}{2}
            \hobox{0}{3}{1}
	\end{tikzpicture}}\to
	\tiny{\begin{tikzpicture}[scale=\domscale+0.25,baseline=-30pt]
			\hobox{0}{0}{p}
			\hobox{0}{1}{\vdots}
			\hobox{0}{2}{2}
            \hobox{0}{3}{1}
            \hobox{1}{0}{n-1}
            \hobox{1}{1}{\vdots}
            \hobox{1}{2}{p+1}
	\end{tikzpicture}}\to
	\tiny{\begin{tikzpicture}[scale=\domscale+0.25,baseline=-30pt]
			\hobox{0}{0}{n}
            \hobox{0}{1}{p}
			\hobox{0}{2}{\vdots}
            \hobox{0}{3}{2}
            \hobox{0}{4}{1}
            \hobox{1}{0}{n-1}
            \hobox{1}{1}{\vdots}
            \hobox{1}{2}{p+1}
	\end{tikzpicture}}=P(\lambda +\rho).
	\]
Then by Lemma \ref{reducible} and Proposition \ref{mthm},  $z=-k+1$ is a reducible point.

 \item When $z=-\min \{p,k\}$, the Young tableau $P(\lambda +\rho)$ has two columns and the numbers of boxes in  these columns are $k$ and $p$. 

 So by Lemma \ref{reducible} and Proposition \ref{mthm},  $z=-\min \{p,k\}$ is an irreducible point. 

\end{enumerate}

Thus  $z=-\min \{p,k\}+1$ is the first reducible point when $m=0$. 
\end{enumerate}

\end{proof}

\begin{lem} 
	Keep notations as above.
  Suppose $z\in \mathbb{Z}$ and $0< m=\min\{p,n-q\}< k-1=q-p-1$. 
        Then $M_{I}(\lambda)$ is reducible if and only if  
        \[	z\in\left\{
		\begin{array}{ll}
			-\max \{ \lceil  {\frac{k+m}{2}}\rceil, h \}+1 + \mathbb{Z}_{\ge 0},&\textnormal{if $h<k$},\\	     	  	
			-k+1 + \mathbb{Z}_{\ge 0}, &\textnormal{if $h\ge k$},\\	
		\end{array}	
		\right.
		\]
  equivalently,
   \[	z\in\left\{
		\begin{array}{ll}
			   	-\lceil  {\frac{k+m}{2}}\rceil+1+\mathbb{Z}_{\ge 0},&\textnormal{if $h\leq \lceil  {\frac{k+m}{2}}\rceil$},\\
-h+1 + \mathbb{Z}_{\ge 0},&\textnormal{if $\lceil  {\frac{k+m}{2}}\rceil<h<k$},\\	  

   	-k+1 + \mathbb{Z}_{\ge 0}, &\textnormal{if $h\ge k$}.\\	
		\end{array}	
		\right.
		\]

\end{lem}

\begin{proof}

Similar to the arguments in Lemma \ref{m=0}, we give the proof by induction on the value of $k$.

If $k=1$ or $2$, $m$ does not exist, so we start with $k=3$.
	
If $k=3$, we will have $m=1$. It is easy to see that the first reducible point is:
	\begin{center}
    $z=\begin{cases}
		-1, \qquad \text{if~} h<3=k, \\ -2, \qquad  \text{if~} h\ge 3=k.
	\end{cases}$
\end{center} 
	
	 If $k=4$, we will have the follows.

  \begin{enumerate}
  \item[1)] 
If $m=1$ and $h<4$, we will have $\{p,q-p,n-q\}=\{1,k,h\}=\{1,4,h\}$.


		When $z=-2$, the Young tableau $P(\lambda+\rho)$ has two columns and the numbers of boxes in these columns are $\{2, n-2\}$ or $\{3,n-3\}$, so by Lemma \ref{reducible} and Proposition \ref{mthm}, 
 $z=-2$ is a reducible point.

		When $z=-3$, the Young tableau $P(\lambda+\rho)$ has three columns and the numbers of boxes in these columns are $\{p, n-q,k=q-p\}$, so by Lemma \ref{reducible} and Proposition \ref{mthm}, 
$z=-3$ is an irreducible point.

		So   $z=-2$ is the first reducible point when $h<4$.
		
		 If $m=1$ and  $h\ge 4$, we have the follows.
		


		When $z=-3$, the Young tableau $P(\lambda+\rho)$ has three columns and the numbers of boxes in these columns are $\{1, 3,n-4\}$, so by Lemma \ref{reducible} and Proposition \ref{mthm},
 $z=-3$ is a reducible point.

		When $z=-4$, the Young tableau $P(\lambda+\rho)$ has three columns and the numbers of boxes in these columns are $\{1, 4,n-3\}$, so by Lemma \ref{reducible} and Proposition \ref{mthm},  $z=-4$ is an irreducible point.

		So  $z=-3$ is the first reducible point when $h\ge 4$.

 Thus   when $m=1$, the first reducible point is:

 \begin{center}
 $z=\begin{cases}
		-2,  \qquad \text{if~} h<4,  \\ -3,  \qquad \text{if~}h\ge 4.
	\end{cases}$
 \end{center}

\item[2)]
	 If $m=2$ and $h<4$, we will have $\{p,q-p,n-q\}=\{2,k,h\}=\{2,4,h\}$.
	
	 When $z=-2$, the Young tableau $P(\lambda+\rho)$ has two columns, so by Lemma \ref{reducible} and Proposition \ref{mthm},  $z=-2$ is a reducible point.
	
	When $z=-3$, the Young tableau $P(\lambda+\rho)$ has three columns and the numbers of boxes in these columns are $\{2, 4,h\}$, so by Lemma \ref{reducible} and Proposition \ref{mthm}, 
     $z=-3$ is an irreducible point.

	If $m=2$ and $h\ge 4$, we have the follows.

	When $z=-3$, the Young tableau $P(\lambda+\rho)$ has three columns and the numbers of boxes in these columns are $\{2, 3,n-5\}$, so by Lemma \ref{reducible} and Proposition \ref{mthm}, 
     $z=-3$ is a reducible point.

	When $z=-4$, the Young tableau $P(\lambda+\rho)$  has three columns and the numbers of boxes in these columns are $\{2, 4,h\}$, so by Lemma \ref{reducible} and Proposition \ref{mthm}, 
    $z=-4$ is an irreducible point.

Thus when $m=2$, the first reducible point is: 
\begin{center}
    $z=\begin{cases}
	-2,  \qquad\text{if}~ h<4,  \\ -3,  \qquad\text{if}~ h\ge 4.
\end{cases}$
\end{center}

\end{enumerate}

So we can conclude that: when $k=4$, the first reducible point is:
\begin{center}
   $z=$ $\begin{cases}
	-2,  \qquad\text{if}~ h<4,  \\ -3,  \qquad\text{if}~ h\ge 4.
\end{cases}$
\end{center}
	
 If $k=5$, we will have the follows.

\begin{enumerate}
	\item[1)] If $m=1$, we will have $\{p,q-p,n-q\}=\{1,k,h\}=\{1,5,h\}$.
	
	When $z=-2$, for any $h$,  the Young tableau $P(\lambda+\rho)$ has two columns, so by Lemma \ref{reducible} and Proposition \ref{mthm}, $z=-2$ is a  reducible point.

		When $z=-3$ and $h\leq \lceil  {\frac{k+m}{2}}\rceil=3$, the Young tableau $P(\lambda+\rho)$ has three columns and the numbers of boxes in these columns are $\{1, h,5\}$, so by Lemma \ref{reducible} and Proposition \ref{mthm}, $z=-3$ is an irreducible point.
		Thus $z=-2$ is the first reducible point when $h\leq \lceil  {\frac{k+m}{2}}\rceil=3$.

		When $z=-3$ and $h=4$, the Young tableau $P(\lambda+\rho)$ has three columns and the numbers of boxes in these columns are $\{1, 3,n-4\}=\{1, 3, 6\}$, so by Lemma \ref{reducible} and Proposition \ref{mthm}, 
      $z=-3$ is a reducible point.

		When $z=-4$   and $h=4$, the Young tableau $P(\lambda+\rho)$  has three columns and the numbers of boxes in these columns are $\{1, 4,n-5\}=\{1, 4,5\}$, so by Lemma \ref{reducible} and Proposition \ref{mthm},  $z=-4$ is an irreducible point.
Thus $z=-3$ is the first reducible point when $h=4$.

		When $z=-4$ and $h\ge 5$, the Young tableau $P(\lambda+\rho)$  has three columns and the numbers of boxes in these columns are $\{1, 4,n-5\}$, so by Lemma \ref{reducible} and Proposition \ref{mthm}, 
       $z=-4$ is a reducible point.

  
	When $z=-5$ and $h\ge 5$, the Young tableau $P(\lambda+\rho)$  has three columns and the numbers of boxes in these columns are $\{1, 5,n-6\}$, so by Lemma \ref{reducible} and Proposition \ref{mthm}, 
       $z=-5$  is an irreducible point.
  Thus $z=-4$ is the first reducible point when $h\ge 5$.

  Therefore when $m=1$, the first reducible point is:
    \begin{center}
    $z=$
      $\begin{cases}-2, \qquad\text{if}~ h\leq \lceil  {\frac{k+m}{2}}\rceil=3,\\ -3, \qquad\text{if}~ h=4, \\
	 	-4, \qquad\text{if}~ h\ge 5.
	 \end{cases}$
  \end{center}

 \item[2)] If $m=2$ or $3$, similarly we can get that  the first reducible point is:  
 \begin{center}
    $z=$ $\begin{cases} -3, \qquad\text{if}~ h\leq \lceil  {\frac{k+m}{2}}\rceil=4<k,\\ -4, \qquad\text{if}~ h\ge 5=k.
	\end{cases}$
 \end{center}

 \end{enumerate}	

So we can conclude that: when $k=5$, the first reducible point is:
\begin{center}
    $z=\begin{cases}-\max\{\lceil \frac{5+m}{2}\rceil,h\}+1,  \qquad& \text{if}~h<5,\\ -4=-5+1, \qquad& \text{if}~h\ge 5.
\end{cases}$
\end{center}

The above arguments can be generalized to the general case. Thus when $0<m<k-1$, the first reducible point is:
\begin{center}
   $z=\begin{cases}-\max\{\lceil \frac{k+m}{2}\rceil,h\}+1,  \qquad &\text{if}~h<k,\\ -k+1, \qquad &\text{if}~h\ge k.
\end{cases}$
\end{center}


\end{proof}

\begin{lem} 
	Keep notations as above.
  Suppose $z\in \mathbb{Z}$ and $m\ge k-1=q-p-1$. 
        Then $M_{I}(\lambda)$ is reducible if and only if  
       
	\[	z\in\left\{
		\begin{array}{ll}
			-\lceil {\frac {m}{2}}\rceil  -\lfloor {\frac{k-1}{2}}\rfloor + \mathbb{Z}_{\ge 0},&\textnormal{if $k$ is even},\\	     	  	
			-\lfloor {\frac {m}{2}}\rfloor  -\lfloor {\frac{k-1}{2}}\rfloor + \mathbb{Z}_{\ge 0}, &\textnormal{if $k$ is odd}.	
		\end{array}	
		\right.
		\]

\end{lem}

\begin{proof}

 If $k=1$, then $m\ge 0$  and $\{p,q-p,n-q\}=\{m,1,h\}$. We will have the follows.
	\begin{enumerate}
	    \item[1)] If $m=0$, from Lemma \ref{m=0} we  know that $z=0=-\lfloor {\frac {0}{2}} \rfloor$ is the first reducible point. 
     \item[2)] If $m=1$ and $z=-1$, the Young tableau $P(\lambda+\rho)$ has three columns and the numbers of boxes in these columns are $\{1,1,n-2\}=\{m,1,h\}$, so by Lemma \ref{reducible} and Proposition \ref{mthm},  $z=0=-\lfloor {\frac {1}{2}} \rfloor$ is the first reducible point.
	\item[3)] If $m>1$ and $z=-\lfloor {\frac {m}{2}}\rfloor$, the Young tableau $P(\lambda+\rho)$ has three columns and the numbers of boxes in these columns are $\{1,m-1,n-m\}$, so by Lemma \ref{reducible} and Proposition \ref{mthm}, $z=-\lfloor {\frac {m}{2}} \rfloor$ is a reducible point.
	
 If $m>1$ and $z=-\lfloor {\frac {m}{2}}\rfloor-1$, the Young tableau $P(\lambda+\rho)$ has three columns and the numbers of boxes in these columns are $\{1,m,n-m-1\}$, so by Lemma \ref{reducible} and Proposition \ref{mthm}, $z=-\lfloor {\frac {m}{2}} \rfloor-1$ is an irreducible point.
 
	\end{enumerate}
	
Thus $z=-\lfloor {\frac {m}{2}} \rfloor=-\lfloor {\frac {m}{2}} \rfloor-\lfloor {\frac {1-1}{2}} \rfloor$ is the first reducible point when $k=1$. 
	
	 If $k=2$, then $m\ge 1$  and $\{p,q-p,n-q\}=\{m,2,h\}$. We will have the follows.
	\begin{enumerate}
		\item[1)] If $m=1$, we will have $\{p,q-p,n-q\}=\{1,2,n-3\}$.
		
		When $z=-1$, the Young tableau $P(\lambda+\rho)$ has two columns and the numbers of boxes in these columns are $\{2, n-2\}$, so by Lemma \ref{reducible} and Proposition \ref{mthm}, 
		    $z=-1$ is a reducible point.

		When $z=-2$, the Young tableau $P(\lambda+\rho)$ has three columns and the numbers of boxes in these columns are $\{1,2,n-3\}$, so by Lemma \ref{reducible} and Proposition \ref{mthm},  $z=-2$ is an irreducible point.

		Thus $z=-1=-\lceil {\frac {1}{2}}\rceil$ is the first reducible point when $m=1$.
		
		\item[2)] If $m=2$, we will have $\{p,q-p,n-q\}=\{2,2,n-4\}$.

		When $z=-1$,  the Young tableau $P(\lambda+\rho)$ has two columns and the numbers of boxes in these columns are $\{2,n-2\}$, so by Lemma \ref{reducible} and Proposition \ref{mthm}, 
		   $z=-1$ is a reducible point.

		When $z=-2$, the Young tableau $P(\lambda+\rho)$ has three columns and the numbers of boxes in these columns are $\{2,2,n-4\}$, so by Lemma \ref{reducible} and Proposition \ref{mthm}, $z=-2$ is an irreducible point.

		Thus $z=-1=-\lceil {\frac {2}{2}}\rceil$ is the first reducible point when $m=2$.
		
		\item[3)] If $m>2$, we will have the follows. 
		
		When $z=-\lceil {\frac {m}{2}}\rceil$, the Young tableau $P(\lambda+\rho)$ has two columns and the numbers of boxes in these columns are $\{2,n-2\}$, so by Lemma \ref{reducible} and Proposition \ref{mthm}, 
		   $z=-1$ is a reducible point.

		When $z=-\lceil {\frac {m}{2}}\rceil-1$, the Young tableau $P(\lambda+\rho)$ has three columns and the numbers of boxes in these columns are $\{m,2,n-m-2\}$, so by Lemma \ref{reducible} and Proposition \ref{mthm},  $z=-\lceil {\frac {m}{2}}\rceil-1$ is an irreducible point.

		Thus $z=-\lceil {\frac {m}{2}}\rceil$ is the first reducible point when $m>2$.

	\end{enumerate}
	
	Therefore $z=-\lceil {\frac {m}{2}}\rceil=-\lceil {\frac {m}{2}}\rceil-\lfloor {\frac {2-1}{2}} \rfloor$ is the first reducible point when $k=2$.

If $k\geq 3$ and $m\ge k-1$, by similar arguments we can get that the first reducible point is:
\begin{center}
   $z=\begin{cases}  -\lceil {\frac {m}{2}} \rceil -\lfloor {\frac {k-1}{2}} \rfloor, &~\text{if}~ k ~\text{is even},\\
   -\lfloor {\frac {m}{2}} \rfloor-\lfloor {\frac {k-1}{2}} \rfloor, &~\text{if}~ k ~\text{is odd}.
\end{cases}$
\end{center}
\end{proof}


	


	

Combined the above lemmas, we complete the proof of  part (1) in Theorem \ref{thm-a}.

\begin{example}
	Let $\frg=\frsl(8, \mathbb{C})$. We consider the parabolic subalgebra $\frq$ corresponding to the subset $\Pi\setminus\{\alpha_2, \alpha_5\}$.
	In this case $p=2$, $q-p=3$ and $n-q=3$. So the scalar generalized Verma module $M_{I}(z(\xi_2+\xi_5))$  is reducible if and only if
	$z\in (-2+\mathbb{Z}_{\ge 0})\bigcup (-\frac{3}{2}+\mathbb{Z}_{\ge 0})=-2+\frac{1}{2}\mathbb{Z}_{\ge 0}$.
\end{example}

\subsection{Reducibility when $z_1\neq z_2$}

First we recall a useful result in \cite{BJ}. 

\begin{prop}[{\cite[Theorem 4.3]{BJ}}]\label{a-reducibility]}
	Let $ \mathfrak{g}=\mathfrak{sl}(n,\mathbb{C}) $. $M_{I}(z\xi_p)$ is a scalar generalized Verma module with highest weight $z\xi_p$, where $\xi_p$ is the fundamental weight corresponding to $\alpha_{p}=e_p-e_{p+1}$. Then $M_{I}(z\xi_p)$ is reducible if and only if $z\in 1-\min\{p,n-p\}+\mathbb{Z}_{\ge 0}$.
\end{prop}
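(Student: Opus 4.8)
The plan is to read off reducibility from Lemma~\ref{reducible}: since $\mathfrak{p}$ is the maximal parabolic attached to $\alpha_p$ we have $\dim\mathfrak{u}=p(n-p)$, so $M_I(z\xi_p)$ is reducible precisely when $\operatorname{GKdim} L(z\xi_p)<p(n-p)$, and everything reduces to computing $\operatorname{GKdim} L(z\xi_p)$ via the Robinson--Schensted recipes of Propositions~\ref{integral} and~\ref{GKdim}. Computing $\lambda+\rho$ with $\lambda=z\xi_p$ exactly as in the text (using $t=\tfrac{n-p}{n}$), one finds that $\lambda+\rho$ consists of two strictly decreasing runs of lengths $p$ and $n-p$, separated by the single ``gap'' $\lambda_p-\lambda_{p+1}=z+1$. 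By Lemma~\ref{GKdown} reducibility is monotone in $z$ (adding $\xi_p$ sends $z$ to $z+1$), so it suffices to locate the threshold. I would split into the cases $z\notin\mathbb{Z}$ and $z\in\mathbb{Z}$.

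When $z\notin\mathbb{Z}$, the two runs lie in different cosets of $\mathbb{Z}$, so $[\lambda]$ consists of exactly these two subsequences. Each is strictly decreasing, hence is inserted into a single column, and $F_A$ of a column of height $\ell$ is $\binom{\ell}{2}$; Proposition~\ref{GKdim} then gives $\operatorname{GKdim} L(z\xi_p)=\binom{n}{2}-\binom{p}{2}-\binom{n-p}{2}=p(n-p)=\dim\mathfrak{u}$, so $M_I(z\xi_p)$ is irreducible. This is consistent with the statement, as $1-\min\{p,n-p\}+\mathbb{Z}_{\ge 0}\subset\mathbb{Z}$.

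When $z\in\mathbb{Z}$ all entries of $\lambda+\rho$ lie in one coset, so $P(\lambda+\rho)$ is a single tableau; the combinatorial heart is to determine its shape. Because each run is strictly decreasing, a longest weakly increasing subsequence of $\lambda+\rho$ uses at most one entry from each run, so it has length $\le 2$, and it has length $2$ exactly when some entry of the first run is $\le$ some entry of the second, i.e.\ iff $z\le -1$; hence $P(\lambda+\rho)$ has at most two columns. If $z\ge 0$ then $\lambda+\rho$ is itself strictly decreasing, the tableau is a single column of height $n$, and $\operatorname{GKdim} L(z\xi_p)=0<p(n-p)$, so the module is reducible. If $z\le -1$ the shape has exactly two columns, say of heights $r\ge n-r$, where $r$ is the length of a longest strictly decreasing subsequence of $\lambda+\rho$; a direct computation from a two-column shape gives $\operatorname{GKdim} L(z\xi_p)=\binom{n}{2}-\binom{r}{2}-\binom{n-r}{2}$. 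Since $x\mapsto\binom{x}{2}+\binom{n-x}{2}$ is strictly increasing for $x\ge n/2$ and $r\ge h:=\max\{p,n-p\}$ always (the longer run is already a strictly decreasing subsequence), we get $\operatorname{GKdim} L(z\xi_p)=p(n-p)=\binom{n}{2}-\binom{h}{2}-\binom{m}{2}$ (with $m=\min\{p,n-p\}$, $h+m=n$) exactly when $r=h$, and $M_I(z\xi_p)$ is reducible otherwise. Finally, an elementary analysis of the candidate decreasing subsequences --- a prefix $a_1>\cdots>a_i$ of the first run followed by a suffix $b_j>\cdots$ of the second, admissible exactly when the joining inequality $a_i>b_j$ holds --- shows $r=\max\{h,\,z+n\}$ for $z\le -1$; thus $r=h$ iff $z+n\le h$ iff $z\le -m$, and so, together with the case $z\ge 0$ already treated, for integral $z$ the module is reducible precisely when $z\ge 1-m=1-\min\{p,n-p\}$. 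Combined with the non-integral case this is exactly the claimed set $1-\min\{p,n-p\}+\mathbb{Z}_{\ge 0}$.

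The step I expect to be the main obstacle is this last combinatorial one: proving rigorously that the insertion tableau has at most two columns and that its first column height equals the longest strictly decreasing subsequence $r=\max\{h,z+n\}$ of $\lambda+\rho$, and then handling the boundary regime (notably $z\le 1-n$, where no admissible first-run/second-run combination exists at all and one checks $r=h$ directly) so that the monotonicity estimate $\binom{r}{2}+\binom{n-r}{2}>\binom{h}{2}+\binom{m}{2}$ converts cleanly into the explicit inequality $z\ge 1-\min\{p,n-p\}$. In the present one-fundamental-weight situation this should be noticeably cleaner than in the general two-weight setting, since no ``$2z$'' term occurs in $\lambda+\rho$ and hence no floor/ceiling corrections appear; one could alternatively phrase the criterion $r=h$ through a one-parabolic analogue of Proposition~\ref{mthm}.
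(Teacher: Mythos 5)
Your proposal is correct. Note that the paper itself offers no proof of this statement—it is quoted verbatim from \cite{BJ}—and your argument is exactly the kind of proof used there and throughout Section 4 of this paper: reduce via Lemma \ref{reducible} to comparing $\operatorname{GKdim}L(z\xi_p)$ with $\dim\mathfrak{u}=p(n-p)$, and compute the Robinson--Schensted shape of $\lambda+\rho$ (two decreasing runs of lengths $p$ and $n-p$), using that the first column length is the longest strictly decreasing subsequence, $r=\max\{h,z+n\}$ for integral $z\le-1$, and that $\binom{r}{2}+\binom{n-r}{2}$ exceeds $\binom{h}{2}+\binom{m}{2}$ exactly when $r>h$, which reproduces the threshold $z\ge 1-\min\{p,n-p\}$. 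So this is essentially the same approach as the cited source, carried out correctly.
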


\begin{lem} 
	Let $ \mathfrak{g}=\mathfrak{sl}(n,\mathbb{C}) $. $M_{I}(\lambda)$ is a scalar generalized Verma module with highest weight $\lambda=z_1 \xi_p+z_2\xi_q$ ($z_1\neq z_2$).  We write $m=\min \{p, n-q\}$, $k=q-p$. 
 Suppose $z_1\notin \mathbb{Z}$ and $z_2\notin \mathbb{Z}$. Then   $M_I(z_1\xi_p+z_2\xi_q)$ is reducible if and only if $$m\geq 1 \text{~and~} z_1+z_2\in -k-m+\mathbb{Z}_{>0}.$$

\end{lem}

\begin{proof}
Recall that

\begin{equation*}
	\begin{split}
		\lambda+\rho=(&\underbrace{z_1t+z_2s+\frac{n-1}{2},\dots,z_1t+z_2s+\frac{n-2p+1}{2}}_{p},
		\\&\underbrace{(t-1)z_1+z_2s+\frac{n-2p-1}{2},\dots,(t-1)z_1+z_2s+\frac{n-2q+1}{2}}_{q-p},
		\\&\underbrace{(t-1)z_1+(s-1)z_2+\frac{n-2q-1}{2},\dots,(t-1)z_1+(s-1)z_2+\frac{-n+1}{2}}_{n-q}).
	\end{split}
\end{equation*}
Thus $\lambda_p-\lambda_{p+1}=z_1+1$, $\lambda_p-\lambda_{q+1}=z_1+z_2+1+q-p$ and $\lambda_q-\lambda_{q+1}=z_2+1$.

	When  $z_1\notin \mathbb{Z}$ and $ z_2\notin \mathbb{Z}$, we have $\lambda_p-\lambda_{p+1}\notin \mathbb{Z}$ and $\lambda_q-\lambda_{q+1}\notin \mathbb{Z}$. Then we have the follows.
\begin{enumerate}
    \item[1)]  When $z_1+z_2\notin \mathbb{Z}$, 
	we know that $[\lambda]$ consists of three elements and $\lambda+\rho$ corresponds to  three Young tableaux and these three Young tableaux have only one column. The numbers of boxes in these columns  are $\{p, q-p,n-q\}$. 
 So  $M_I(z_1\xi_p+z_2\xi_q)$ is irreducible by Lemma \ref{reducible} and Proposition \ref{mthm}.
 \item[2)] When $z_1+z_2\in \mathbb{Z}$, equivalently $\lambda_p-\lambda_{q+1}\in \mathbb{Z}$, we know that $[\lambda]$ consists of two elements and $\lambda+\rho$ corresponds to  two Young tableaux. One of  these Young tableaux has only one column and the number of boxes in this column  is $q-p$. The other Young tableau may have two columns. We denote it by $P_2$.

If $m=0$ (equivalently $n-q=0$), the  Young tableau $P_2$ has one column and the number of boxes in this column is $p$, so $M_I(z_1\xi_p+z_2\xi_q)$ is irreducible by Lemma \ref{reducible} and Proposition \ref{mthm}.  

If $m\ge 1$,  the Young tableau $P_2$ may have two columns. 

Firstly, 
when $z_1+z_2>-k-m$,  the  Young tableau $P_2$ has two columns and the numbers of boxes in these columns are $\{p+r,n-q-r\}$, where $0<r\leq n-q$. So $M_I(z_1\xi_p+z_2\xi_q)$ is reducible by Lemma \ref{reducible} and Proposition \ref{mthm}.

Secondly, when $z_1+z_2=-k-m$,   the  Young tableau $P_2$ has two columns and the numbers of boxes in these columns are $\{p,n-q\}$. So $M_I(z_1\xi_p+z_2\xi_q)$ is irreducible by Lemma \ref{reducible} and Proposition \ref{mthm}.
 
\end{enumerate}

 Thus when $z_1,z_2\notin \mathbb{Z}$,   $M_I(z_1\xi_p+z_2\xi_q)$ is reducible if and only if $m\geq 1$ and $z_1+z_2\in -k-m+\mathbb{Z}_{>0}$.
 
\end{proof}

\begin{lem} 
	Keep notations as above. Suppose $z_1\notin \mathbb{Z}$ and $ z_2\in \mathbb{Z}$.  Then $M_I(z_1\xi_p+z_2\xi_q)$ is reducible if and only if 
 $$m\geq 1 \text{~and~} z_2\in 1-\min\{k, n-q\}+ \mathbb{Z}_{\ge 0}.$$

\end{lem}

\begin{proof}

When $z_1\notin \mathbb{Z}$ and $ z_2\in \mathbb{Z}$, equivalently  $\lambda_p-\lambda_{p+1}\notin \mathbb{Z}$ and $\lambda_q-\lambda_{q+1}\in \mathbb{Z}$,  we know that $[\lambda]$ consists of two elements and $\lambda+\rho$ corresponds to  two Young tableaux. Then we have the follows.
	\begin{enumerate}
		\item[1)] If $m=0$ (equivalently $n-q=0$), 
		  each  Young tableau has one column. So $M_I(z_1\xi_p+z_2\xi_q)$ is irreducible by Lemma \ref{reducible} and Proposition \ref{mthm}.  
		\item[2)] If $n-q\geq m>0$,
		one of these Young tableaux has one column, and the number of boxes in this column is $p$.
		The other one has one column or two columns.
  
  By Proposition \ref{a-reducibility]},  $M_I(z_1\xi_p+z_2\xi_q)$  is reducible if and only if 
		\begin{center}
		    $z_2\in 1-\min\{q-p, n-q\}+ \mathbb{Z}_{\ge0}$. 
		\end{center}
		
	\end{enumerate}
This finishes the proof.

\end{proof}

\begin{lem} 
	Keep notations as above. Suppose  $ z_1\in \mathbb{Z}$.  Then $M_I(z_1\xi_p+z_2\xi_q)$ is reducible if and only if 
 $$m\geq 1, z_2\notin \mathbb{Z} \text{~and~} z_1\in 1-\min\{p, q-p\}+ \mathbb{Z}_{\ge 0},$$
 or 
  $$m=0, z_2\in \mathbb{C} \text{~and~} z_1\in 1-\min\{p, n-p\}+ \mathbb{Z}_{\ge 0}.$$

\end{lem}

\begin{proof}

 If $m\geq 1$, 
	 when $z_2\notin \mathbb{Z}$ and $ z_1\in \mathbb{Z}$,  we know that $[\lambda]$ consists of two elements and $\lambda+\rho$ corresponds to  two Young tableaux. One of these Young tableaux has one column, and the number of boxes in this column is $n-q$.
	The other one has one column or two columns.
 
 By Proposition \ref{a-reducibility]},  $M_I(z_1\xi_p+z_2\xi_q)$  is reducible if and only if 
 \begin{center}
	   $z_1\in 1-\min\{p, q-p\}+ \mathbb{Z}_{\ge0}$.
 \end{center}

If $m=0$ (equivalently $n-q=0$), when $z_2\in \mathbb{Z}$ and $ z_1\in \mathbb{Z}$,
$\lambda +\rho$ corresponds to  one Young tableau.
By Proposition \ref{a-reducibility]}, $M_I(z_1\xi_p+z_2\xi_q)$  is reducible if and only if 
 \begin{center}
	   $z_1\in 1-\min\{p, n-p\}+ \mathbb{Z}_{\ge0}$.
 \end{center}

\end{proof}

\begin{lem} 
	Keep notations as above. Suppose 
$m\geq 1$, $z_1\neq z_2$, $z_1\in \mathbb{Z}$ and $ z_2\in \mathbb{Z}$.  Then $M_I(z_1\xi_p+z_2\xi_q)$ is reducible if and only if

				\begin{enumerate}
					\item $z_1\in \mathbb{Z}$, $z_2>-1$, or 
     \item $z_1>-1$, $z_2\in \mathbb{Z}$, or
					
					\item   $z_1\le -1, z_2\le -1$, and
    	\begin{enumerate}
		    \item  $z_1+z_2 >-k-m$, or
     \item $ z_1>-\min \{k, p\}$, $z_1\neq z_2$, or
     \item $z_2>-\min \{k, n-q\}$, $z_1\neq z_2$.
	\end{enumerate}
					
				\end{enumerate}

\end{lem}

\begin{proof}
 
    When $z_1\in \mathbb{Z} $ 
 and $z_2\in \mathbb{Z}$, $\lambda +\rho$ corresponds to  one Young tableau.

	If $n-q\geq m\ge 1$, we will have the following.
	
		 When  $z_1\in \mathbb{Z}$ and $z_2>-1$ or $z_1>-1$ and $z_2\in \mathbb{Z}$, the Young tableau has two columns, so by Lemma \ref{reducible} and Proposition \ref{mthm},
	 $M_I(z_1\xi_p+z_2\xi_q)$ will be reducible .

		
		 When $z_1\le -1$ and $z_2\le -1$,
 we fix the value of $k=q-p$ and change the values of $p$ and $n-q$.  
 
 If $k=1$,  we will have $\{p,q-p,n-q\}=\{p,1,n-p-1\}$. Then we have the following.

\begin{enumerate}
    \item[1)]
 If $m=1$, when $(z_1,z_2)=(-1, -2)$ or $(z_1,z_2)=(-2, -1)$,  the Young tableau $P(\lambda+\rho)$ has three columns and the numbers of boxes in these columns are $\{1,1,n-2\}$.
 By Lemma \ref{reducible} and Proposition \ref{mthm}, $(z_1,z_2)$ is an irreducible point.

\item[2)]  If $m=2$, when $(z_1,z_2)=(-1, -2)$ or $(z_1,z_2)=(-2, -1)$,  the Young tableau $P(\lambda+\rho)$ has three columns and the numbers of boxes in these columns are $\{1,2,n-3\}$.
 By Lemma \ref{reducible} and Proposition \ref{mthm}, $(z_1,z_2)$ is an irreducible point.

   Therefore $M_I(z_1\xi_p+z_2\xi_q)$ is  irreducible when $m=1 ~\text{or}~2$ for all $z_1\le -1$, $z_2\le -1$ and $z_1\neq z_2$.

\item[3)] If $m=3$, when $(z_1,z_2)=(-1, -2)$ or $(z_1,z_2)=(-2, -1)$,  the Young tableau $P(\lambda+\rho)$ has three columns and the numbers of boxes in these columns are $\{1,2,n-3\}$.
 By Lemma \ref{reducible} and Proposition \ref{mthm}, $(z_1,z_2)$ is a reducible point.

 When $(z_1,z_2)=(-1, -3)$ or $(z_1,z_2)=(-3, -1)$,  the Young tableau $P(\lambda+\rho)$ has three columns and the numbers of boxes in these columns are $\{1,3,n-4\}$.
 By Lemma \ref{reducible} and Proposition \ref{mthm}, $(z_1,z_2)$ is an irreducible point.

       Therefore  $(z_1,z_2) $ is a boundary reducible point of   $M_I(z_1\xi_p+z_2\xi_q)$ if and only if $z_1+z_2=-3>-k-m=-1-3=-4$.

\item[4)] If $m\geq 4$, when $z_1+z_2>-(m+1)$, by similar arguments with above, $M_I(z_1\xi_p+z_2\xi_q)$ is reducible.

    	When $z_1+z_2=-1-m$, 
    	the Young tableau $P(\lambda+\rho)$ has three columns, and the numbers of boxes in these columns are $\{1,m,n-m-1\}$.
    	 By Lemma \ref{reducible} and Proposition \ref{mthm}, $(z_1,z_2)$ is an  irreducible point.

    \end{enumerate}
   
    Thus when $m\geq 4$, $M_I(z_1\xi_p+z_2\xi_q)$ is reducible if and only if  $z_1+z_2>-(m+1)$. 

Therefore if $k=1$,  the reducible points are:

\begin{center}
    $ \begin{cases}
         \emptyset, &~\text{if}~ m=1,2,\\
 z_1+z_2>-(m+1), &~ \text{if} ~m\ge 3,
    \end{cases} $
\end{center}
for all $z_1\le -1$, $z_2\le -1$, $z_1\neq z_2$.

 If $k=2$,  we will have $\{p,q-p,n-q\}=\{p,2,n-p-2\}$. Then we have the following.

		\begin{enumerate}
  \item[1)]  If $m=\min\{p,n-q\}=p=n-q=1$,
  it is easy to check that $M_I(z_1\xi_p+z_2\xi_q)$ is irreducible.
		
   If $p=1, n-q>1$. Then we have the follows.
			
When $(z_1,z_2)=(-1,-2)$, the Young tableau $P(\lambda+\rho)$ has three columns and the numbers of boxes in these columns are $\{1,2,n-3\}$.
    By Lemma \ref{reducible} and Proposition \ref{mthm}, $(z_1,z_2)$ is an irreducible point.
				

When $(z_1,z_2)=(-s,-1) $ ($s\ge 2$), the Young tableau $P(\lambda+\rho)$ has three columns and the numbers of boxes in these columns are $\{1,1,n-2\}$.
 By Lemma \ref{reducible} and Proposition \ref{mthm}, $(z_1,z_2)$  is a reducible point.

				When $(z_1,z_2)=(-2,-3) $ or $(-3,-2)$, the Young tableau $P(\lambda+\rho)$ has three columns and the numbers of boxes in these columns are $\{1,2,n-3\}$.
    By Lemma \ref{reducible} and Proposition \ref{mthm}, $(z_1,z_2)$ is an irreducible point.
 
		Thus when $p=1$ and $ n-q>1$,
  $M_I(z_1\xi_p+z_2\xi_q)$ is reducible if and only if $(z_1,z_2)=(-s,-1) $ ($s\ge2$).
  
		 If $p>1, n-q=1$, by similar arguments, we can get that
       $M_I(z_1\xi_p+z_2\xi_q)$ is reducible if and only if  $(z_1,z_2)=(-1,-l) $ ($l\ge2$).
   
  Therefore if $m=1$, the boundary  reducible points of $M_I(z_1\xi_p+z_2\xi_q)$ are: 
\begin{center}
$(z_1,z_2)=\begin{cases} 
\emptyset, &\text{if}~p=n-q=1,\\
(-s, -1),  &\text{if}~ n-q\ge 2,\\
(-1, -l),   &\text{if}~ p\ge 2,
\end{cases}$
\end{center}
where $s$, $l\in \mathbb{Z}_{\ge 2}$.

   \item[2)]   If $m=2~\text{or}~3$, by similar arguments, we can get that $M_I(z_1\xi_p+z_2\xi_q)$ is reducible if and only if  $\max\{z_1, z_2\}=-1$.

\item[3)] If $m\ge 4$, we will have the following.

When $\max\{z_1, z_2\}=-1$, it is easy to check that $M_I(z_1\xi_p+z_2\xi_q)$ is reducible.

If $p=4, n-q\ge 4$,  we have the follows. 

When $(z_1, z_2)=(-2,-3)$ or $(-3,-2)$, the Young tableau $P(\lambda+\rho)$ has three columns and the numbers of boxes in these columns are $\{n-5,3,2\}$.  By Lemma \ref{reducible} and Proposition \ref{mthm}, $(z_1,z_2)$  is a reducible point.

When $(z_1, z_2)=(-2,-4)$ or $(-4,-2)$, the Young tableau $P(\lambda+\rho)$ has three columns and the numbers of boxes in these columns are $\{n-6,4,2\}$.  By Lemma \ref{reducible} and Proposition \ref{mthm}, $(z_1,z_2)$ is an irreducible point.

  	Thus when $p=4, n-q\ge 4$,
       $M_I(z_1\xi_p+z_2\xi_q)$ is reducible if and only if $z_1+z_2>-(2+4)=-6$ or $\max\{z_1, z_2\}=-1$.

		 If $p=m> 4, n-q\ge m $, the arguments are similar.
	

  Thus if $m\geq 4$,
     $M_I(z_1\xi_p+z_2\xi_q)$ is reducible if and only if
  $z_1+z_2> -(2+m)$ or $\max \{ z_1, z_2 \}=-1$.

 \end{enumerate}

Therefore when $k=2$, $(z_1,z_2) $ is a first reducible point of  $M_I(z_1\xi_p+z_2\xi_q)$ if and only if it satisfies the following
\begin{center}
   $\begin{cases}
 (z_1, z_2)= \begin{cases}
 \emptyset, &\text{if}~p=n-q=1,\\
(-s,-1),   &\text{if}~ n-q\ge 2,\\ (-1,-l),   &\text{if}~p\ge 2,
  \end{cases}   &\text{if}~m=1, \\ \max \{z_1,z_2\}=-1,  &\text{if}~m=2,3,  \\ z_1+z_2>-(2+m), \text{~or~} \max \{z_1,z_2\}=-1  &\text{if}~m\ge 4,
\end{cases}$
\end{center}
where  $s,l \in \mathbb{Z}_{\geq 2}$.

  If $k\ge 3$, by similar arguments, 
  we can get that the reducible points of $M_I(z_1\xi_p+z_2\xi_q)$ will satisfy one of the following conditions:
	\begin{enumerate}
		    \item[(a)]  $z_1+z_2 >-k-m$.
     \item[(b)] $ z_1>-\min \{k, p\}$.
     \item[(c)] $z_2>-\min \{k, n-q\}$.
	\end{enumerate}
  Here $z_1\le -1$, $z_2\le -1$, $z_1\neq z_2$.

This finishes the proof.

		
					
					
     


\end{proof}

\begin{Cor}
   Keep notations as above. The scalar generalized Verma module $M_I(z_1\xi_p+z_2\xi_q)$  ($z_1\neq z_2$) is reducible if and only if one of the following holds. 
    \begin{enumerate}
		\item If $n-q=0$, we have  $z_1\in 1-\min\{p, q-p\}+ \mathbb{Z}_{\ge 0},  z_2\in \mathbb{C}$.
		
		\item If $n-q\ge 1$, we have
		\begin{enumerate}
			\item  $z_2\in 1-\min\{q-p, n-q\}+ \mathbb{Z}_{\ge 0}$, $z_1\in \mathbb{C}$, or
			\item  $z_1\in 1-\min\{p, q-p\}+ \mathbb{Z}_{\ge 0}$, $z_2\in \mathbb{C}$, or
			\item  $(z_1,z_2)\in \mathbb{C}\times \mathbb{C}$, and
				 $z_1+z_2 \in -q+p-\min\{p,n-q\}+\mathbb{Z}_{>0}$.
	\end{enumerate}
					
				\end{enumerate}

\end{Cor}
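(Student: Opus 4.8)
The plan is to deduce this from the preceding Theorem (the $z_1\neq z_2$ version) by a change of notation and a comparison of cases, showing that the reducible locus described there coincides with the one described here. I set $k=q-p$ and $m=\min\{p,n-q\}$, so that $-q+p-\min\{p,n-q\}=-k-m$, $\min\{q-p,n-q\}=\min\{k,n-q\}$, and, when $n-q=0$, $\min\{p,n-p\}=\min\{p,q-p\}$. Hence the $n-q=0$ clauses of the two statements are literally the same, and for $n-q\ge 1$ the present statement asserts that the reducible locus, inside $\{z_1\neq z_2\}$, equals $L_1\cup L_2\cup L_3$, where $L_1$ is the set of $(z_1,z_2)$ with $z_1\in 1-\min\{p,k\}+\mathbb{Z}_{\ge 0}$, $L_2$ the set with $z_2\in 1-\min\{k,n-q\}+\mathbb{Z}_{\ge 0}$, and $L_3$ the set with $z_1+z_2\in -k-m+\mathbb{Z}_{>0}$. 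I would first record the elementary facts that membership in $L_1$ forces $z_1\in\mathbb{Z}$, membership in $L_2$ forces $z_2\in\mathbb{Z}$, on $L_3$ one has $z_1+z_2\in\mathbb{Z}$ (so $z_1\in\mathbb{Z}$ if and only if $z_2\in\mathbb{Z}$), and $\min\{p,k\},\min\{k,n-q\}\ge 1$ since $p,k,n-q\ge 1$.

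For the inclusion that reducibility in the sense of the Theorem implies $(z_1,z_2)\in L_1\cup L_2\cup L_3$, I would run through the Theorem's cases. Case (2)(a) is contained in $L_3$, case (2)(b) in $L_2$, and case (2)(c) in $L_1$, by inspection. In case (2)(d), integrality of $z_1,z_2$ together with $z_2>-1$ gives $z_2\ge 0\ge 1-\min\{k,n-q\}$, hence $z_2\in L_2$, and symmetrically $z_1>-1$ gives $z_1\in L_1$; this settles (2)(d)(i) and (2)(d)(ii). In (2)(d)(iii), integrality upgrades the strict inequalities to coset membership: (A) gives $z_1+z_2\ge -k-m+1$, i.e. $L_3$; (B) gives $z_1\ge 1-\min\{p,k\}$, i.e. $L_1$; (C) gives $z_2\in L_2$.

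For the converse I would reverse the argument. Given $(z_1,z_2)\in L_1$ with $z_1\neq z_2$ (so $z_1\in\mathbb{Z}$): if $z_2\notin\mathbb{Z}$ this is case (2)(c); if $z_2\in\mathbb{Z}$, it is (2)(d)(i) when $z_2>-1$, (2)(d)(ii) when $z_1>-1$, and otherwise (when $z_1,z_2\le -1$) the membership $z_1\in 1-\min\{p,k\}+\mathbb{Z}_{\ge 0}$ reads $z_1>-\min\{k,p\}$, which is (2)(d)(iii)(B), the clause $z_1\neq z_2$ there being automatic under the standing hypothesis. Points of $L_2$ I would treat symmetrically via (2)(b), (2)(d)(i), (2)(d)(ii), (2)(d)(iii)(C). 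Given $(z_1,z_2)\in L_3$ (so $z_1+z_2\in\mathbb{Z}$): if $z_1,z_2\notin\mathbb{Z}$ this is case (2)(a); if $z_1,z_2\in\mathbb{Z}$, it is (2)(d)(i) when $z_2>-1$, (2)(d)(ii) when $z_1>-1$, and when $z_1,z_2\le -1$ the membership $z_1+z_2\in -k-m+\mathbb{Z}_{>0}$ reads $z_1+z_2>-k-m$, which is (2)(d)(iii)(A). This would show that the two loci agree, hence the statement.

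I do not anticipate a genuine obstacle here: the argument is combinatorial bookkeeping once the Theorem is available. The one point requiring care is matching the coset-membership descriptions of $L_1,L_2,L_3$ against the inequality-type conditions ($z_2>-1$, $z_1+z_2>-k-m$, $z_1>-\min\{k,p\}$, and so on) appearing in case (2)(d) of the Theorem, which is legitimate precisely because the relevant parameters are integers in those sub-cases; keeping track, in each sub-case, of which of $z_1,z_2$ lies in $\mathbb{Z}$ is what makes the comparison transparent.
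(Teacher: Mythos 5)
Your proposal is correct, and it is essentially the paper's (implicit) argument: the Corollary is obtained from the preceding Theorem by exactly the bookkeeping you describe — identifying $\min\{p,n-p\}=\min\{p,q-p\}$ when $n-q=0$, absorbing the Theorem's non-integral cases (2)(a)–(c) into the three coset conditions, and using integrality in case (2)(d) to convert the strict inequalities into the corresponding coset memberships.
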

\begin{example}
	Let $\frg=\frsl(10, \mathbb{C})$. We consider the parabolic subalgebra $\frq$ corresponding to the subset $\Pi\setminus\{\alpha_3, \alpha_6\}$.
	In this case $p=3$, $q-p=3$ and $n-q=4$. So the scalar generalized Verma module $M_{I}(z_1\xi_3+z_2\xi_6)$  is reducible if and only if one of the following holds. 
	\begin{enumerate}
		\item $z_1\in \mathbb{C}$, $z_2\in 1-\min\{3, 4\}+ \mathbb{Z}_{\ge 0}=-2+\mathbb{Z}_{\ge0}$.
		\item $z_2\in \mathbb{C}$, $z_1\in 1-\min\{3, 3\}+ \mathbb{Z}_{\ge 0}=-2+\mathbb{Z}_{\ge0}$.
		\item  $(z_1,z_2)\in \mathbb{C}\times \mathbb{C}$, and
				 $z_1+z_2 \in-6+\mathbb{Z}_{>0}$.
		\item $z_1=z_2\in (-2+ \mathbb{Z}_{\ge 0})\bigcup (-\frac{5}{2}+\mathbb{Z}_{\ge 0})=-\frac{5}{2}+\frac{1}{2}\mathbb{Z}_{\ge 0}$.
	\end{enumerate}
 For this example, we can draw these reducible points in the following complex plane $\mathbb{C}^2$:

\hspace{1cm}

\begin{center}
{
\psset{unit=1.4cm}
   \tiny{ \begin{pspicture}(-4,-4)(2.5,2)
        \psset{linecolor=black}
        \cnode*(0,0){0.06}{a4}
        \cnode*(-2,-1){0.06}{a14}
        \cnode*(-1,-1){0.06}{a15}
        \cnode*(-2,-2){0.06}{b4}
        \cnode*(-1,-2){0.06}{b5}
        \cnode*(-2.5,-2.5){0.06}{b7}
        \cnode*(-1.5,-1.5){0.06}{b8}
        \cnode*(-0.5,-0.5){0.06}{b9}
        \psset{linecolor=black}
        \psline[linewidth=1pt](-5,1)(2,1)
        \psline[linewidth=1pt](-5,0)(2,0)
        \psline[linewidth=1pt](-5,-1)(2,-1)
        \psline[linewidth=1pt](-5,-2)(2,-2)
        \psline[linewidth=1pt](1,-4)(1,2)
        \psline[linewidth=1pt](0,-4)(0,2)
        \psline[linewidth=1pt](-1,-4)(-1,2)
        \psline[linewidth=1pt](-2,-4)(-2,2)
        \psline[linewidth=1pt](-5.2,0.2)(-0.8,-4.2)
        \psline[linewidth=1pt](-5.2,1.2)(0.2,-4.2)
        \psline[linewidth=1pt](-4.2,1.2)(1.2,-4.2)
        \psline[linewidth=1pt](-3.2,1.2)(1.2,-3.2)
        \psline[linewidth=1pt](-2.2,1.2)(1.2,-2.2)
        \psline[linewidth=1pt](-1.2,1.2)(1.2,-1.2)
        \psline[linewidth=1pt](-0.2,1.2)(1.2,-0.2)
        \psline[linewidth=1pt](0.8,1.2)(1.2,0.8)
        \uput[u](-1,1.9){\scriptsize{$z_1=-1$}}
        \uput[r](1.9,-1){\scriptsize{$z_2=-1$}}
 \uput[r](1.9,0)
 {\scriptsize{$z_2=0$}}

        \uput[r](-1.5,-1.2){\scriptsize(-1,-1)}
        \uput[r](-2.5,-1.2){\scriptsize(-2,-1)}
        \uput[r](-1.5,-2.2){\scriptsize(-1,-2)}
        \uput[r](-0.47,-0.2){\scriptsize(0,0)}
        \uput[d](-2.7,-2.5){\scriptsize(-2.5,-2.5)}
        \uput[d](-2.2,-1.9){\scriptsize(-2,-2)}
        \uput[u](-3,1){\scriptsize{$\vdots$}}
        \uput[u](-3,1.4){\scriptsize{$\vdots$}}
        \uput[u](-3,1.75){\scriptsize{$\cdot$}}
        \uput[u](-4,1){\scriptsize{$\vdots$}}
        \uput[u](-4,1.4){\scriptsize{$\vdots$}}
        \uput[u](-4,1.75){\scriptsize{$\cdot$}}
        \uput[r](1,-3){\scriptsize{$\cdots$}}
        \uput[r](1.4,-3){\scriptsize{$\cdots$}}
        \uput[r](1.75,-3){\scriptsize{$\cdot$}}

    \end{pspicture}}}
\end{center}
In the above picture, a vertical or horizontal line means a complex line $\mathbb{C}$.
\end{example}
\begin{example}
	Let $\frg=\frsl(11, \mathbb{C})$. We consider the parabolic subalgebra $\frq$ corresponding to the subset $\Pi\setminus\{\alpha_3, \alpha_9\}$.
	In this case $p=3$, $q-p=6$ and $n-q=2$. So the scalar generalized Verma module $M_{I}(z_1\xi_3+z_2\xi_9)$ is reducible if and only if one of the following
holds.
	\begin{enumerate}
		\item $z_1\in \mathbb{C}$, $z_2\in 1-\min\{6, 2\}+ \mathbb{Z}_{\ge 0}=-1+\mathbb{Z}_{\ge0}$.
		\item $z_2\in \mathbb{C}$, $z_1\in 1-\min\{3, 6\}+ \mathbb{Z}_{\ge 0}=-2+\mathbb{Z}_{\ge0}$.
		\item  $(z_1,z_2)\in \mathbb{C}\times \mathbb{C}$, and
				 $z_1+z_2 \in -8+\mathbb{Z}_{>0}$.
     \item $z_1=z_2\in (-4+ \mathbb{Z}_{\ge 0})\bigcup (-\frac{7}{2}+\mathbb{Z}_{\ge 0})=-4+\frac{1}{2}\mathbb{Z}_{\ge 0}$.
	\end{enumerate}

We can draw these reducible points in the following complex plane $\mathbb{C}^2$:

\hspace{1cm}

\begin{center}
\psset{unit=1.6cm}
		\begin{pspicture}(-5.6,-5.6)(1.7,1.7)
			\psset{linecolor=black}
			\cnode*(-1.6,-0.8){0.05}{a6}
			\cnode*(-0.8,-0.8){0.05}{a7}
			\cnode*(-4,-1.6){0.05}{b3}
			\cnode*(-3.2,-1.6){0.05}{b4}
			\cnode*(-2.4,-1.6){0.05}{b5}
			\cnode*(-1.6,-1.6){0.05}{b6}
			\cnode*(-0.8,-1.6){0.05}{b7}
            \cnode*(-3.2,-3.2){0.05}{b10}
            \cnode*(-2.8,-2.8){0.05}{b11}
            \cnode*(-2,-2){0.05}{b12}
            \cnode*(-1.6,-1.6){0.05}{b13}
            \cnode*(-1.2,-1.2){0.05}{b14}
            \cnode*(-0.4,-0.4){0.05}{b15}
			\cnode*(-3.2,-2.4){0.05}{c1}
			\cnode*(-2.4,-2.4){0.05}{c2}
			\cnode*(-2.4,-3.2){0.05}{c12}
			\psline[linewidth=1pt](-0.8,1.5)(-0.8,-4.8)
			\psline[linewidth=1pt](0,-4.8)(0,1.5)
			\psline[linewidth=1pt](0.8,-4.8)(0.8,1.5)
            \psline[linewidth=1pt](-1.6,1.5)(-1.6,-4.8)
			\psline[linewidth=1pt](-5.6,-0.8)(1.5,-0.8)
			\psline[linewidth=1pt](-5.6,0)(1.5,0)
			\psline[linewidth=1pt](-5.6,0.8)(1.5,0.8)
            \psline[linewidth=1pt](-5.75,0.15)(-0.65,-4.95)
            \psline[linewidth=1pt](-5.75,0.95)(0.15,-4.95)
            \psline[linewidth=1pt](-4.95,0.95)(0.95,-4.95)
            \psline[linewidth=1pt](-4.15,0.95)(0.95,-4.15)
            \psline[linewidth=1pt](-3.35,0.95)(0.95,-3.35)
            \psline[linewidth=1pt](-2.55,0.95)(0.95,-2.55)
            \psline[linewidth=1pt](-1.75,0.95)(0.95,-1.75)
            \psline[linewidth=1pt](-0.95,0.95)(0.95,-0.95)
            \psline[linewidth=1pt](-0.15,0.95)(0.95,-0.15)
            \psline[linewidth=1pt](0.65,0.95)(0.95,0.65)
        \uput[u](-0.85,1.4){\scriptsize{$z_1=-1$}}
        \uput[r](1.4,-0.85){\scriptsize{$z_2=-1$}}
        \uput[u](-2.4,0.78){\scriptsize{$\vdots$}}
        \uput[u](-2.4,1.05){\scriptsize{$\vdots$}}
        \uput[u](-2.4,1.3){\scriptsize{$\cdot$}}
        \uput[u](-3.2,0.78){\scriptsize{$\vdots$}}
        \uput[u](-3.2,1.05){\scriptsize{$\vdots$}}
        \uput[u](-3.2,1.3){\scriptsize{$\cdot$}}
        \uput[u](-4,0.78){\scriptsize{$\vdots$}}
        \uput[u](-4,1.05){\scriptsize{$\vdots$}}
        \uput[u](-4,1.3){\scriptsize{$\cdot$}}
        \uput[u](-4.8,0.78){\scriptsize{$\vdots$}}
        \uput[u](-4.8,1.05){\scriptsize{$\vdots$}}
        \uput[u](-4.8,1.3){\scriptsize{$\cdot$}}
     
        \uput[r](0.78,-1.6){\scriptsize{$\cdots$}}
        \uput[r](1.05,-1.6){\scriptsize{$\cdots$}}
        \uput[r](1.3,-1.6){\scriptsize{$\cdot$}}
        
        \uput[r](0.78,-2.4){\scriptsize{$\cdots$}}
        \uput[r](1.05,-2.4){\scriptsize{$\cdots$}}
        \uput[r](1.3,-2.4){\scriptsize{$\cdot$}}
        
        \uput[r](0.78,-3.2){\scriptsize{$\cdots$}}
        \uput[r](1.05,-3.2){\scriptsize{$\cdots$}}
        \uput[r](1.3,-3.2){\scriptsize{$\cdot$}}
        
        \uput[r](0.78,-4){\scriptsize{$\cdots$}}
        \uput[r](1.05,-4){\scriptsize{$\cdots$}}
        \uput[r](1.3,-4){\scriptsize{$\cdot$}}
			\uput[d](-0.7,-0.8){\tiny(-1,-1)}
			\uput[d](-1.7,-0.8){\tiny(-2,-1)}
			\uput[d](-0.7,-1.6){\tiny(-1,-2)}
   \uput[d](-3.3,-3.15){\tiny(-4,-4)}
	\uput[d](-3.1,-2.75){\tiny(-3.5,-3.5)}		
		\end{pspicture}
	\end{center}

 \end{example}

\section{Reducibility of scalar  generalized Verma modules for type $D_n$}
In this section, we write $n=2k$ if $n$ is an even integer and  $n=2k+1$ if $n$ is an odd integer.

First we recall a useful result which will be used in our proof. 

\begin{prop} [{\cite[Theorem 1.5 $\&$ Corollary 5.4]{BZ}}]\label{son-socular} 
Let $\mathfrak{g}=\mathfrak{so}(2n,\mathbb{C})$.
Suppose the parabolic subalgebra $\mathfrak{p}$ corresponds to two simple roots $\{\alpha_p, \alpha_{q}\}$ $(p,q\in \{1,n-1,n\})$.  A simple integral  module $L(\lambda)$ (with $\lambda=z_1\xi_p+z_2\xi_{q}$ being integral) has $\gkd L(\lambda)=\dim (\mathfrak{u})$ if and only if $p((\lambda+\rho)^-)^{\ev}$ has the same shape with $(2, \underbrace{1,\cdots,1}_{n-2})$.
\end{prop}

\subsection{Reducibility when $p=1$ and $q=n-1$ or $n$}

\begin{lem}\label{z1z2z} 
		Let $ \mathfrak{g}=\mathfrak{so}(2n,\mathbb{C}) $ with $n\geq 4$. $M_{I}(\lambda)$ is a scalar generalized Verma module with highest weight $\lambda=z_1 \xi_p+z_2\xi_q$ $(p,q\in \{1,n-1,n\})$.
 Suppose $p=1$ and $q=n-1$ or $n$.  
  When $z_1=z_2=z\in \mathbb{Z}$,  $M_{I}(z (\xi_p+\xi_q))$ is reducible if and only if
       \begin{center}
           $z\in \begin{cases}-n+3+\mathbb{Z}_{\ge 0},   &\textnormal{if}~n=2k+1,\\ -1+\mathbb{Z}_{\ge 0},   &\textnormal{if}~n=4,\\
           -n+4+\mathbb{Z}_{\ge 0},  &\textnormal{if}~n=2k\ge 6.
\end{cases}$
       \end{center}

\end{lem}

\begin{proof}
	
 Firstly, we consider the case $p=1,q=n-1$.

		Now we have  $\Delta^+(\frl) = \{\alpha_2,\dots,\alpha_{n-2},\alpha_{n}\}$, where $\alpha_i=e_i-e_{i+1}$ $(1\le i\le n-1)$ and  $\alpha_n=e_{n-1}+e_n$.
	Correspondingly we have
	$\xi_1=(1,0,\cdots,0)$ and
	$\xi_{n-1}=(\frac{1}{2},\frac{1}{2},\cdots,\frac{1}{2},-\frac{1}{2})$.
 
	Since 
	$\rho=(n-1,n-2,\dots,1,0)$, we have
	$$\lambda+\rho=(z_1+\frac{1}{2}z_2+n-1,\frac{1}{2}z_2+n-2,\cdots,\frac{1}{2}z_2+1,-\frac{1}{2}z_2).$$
	
	 When $z_1=z_2=z$,    we  have
	\begin{center}
		$\lambda+\rho=(\frac{3}{2}z+n-1,\frac{1}{2}z+n-2,\cdots,\frac{1}{2}z+1,-\frac{1}{2}z)$.
		
	\end{center}

	When $z\in \mathbb{Z}$, equivalently $\lambda+\rho$ is integral,  we have the following.
	\begin{enumerate}
	\item[1)] When $z\ge 0$,  we will have
\begin{center}
	    $p((\lambda+\rho)^-)^{\ev}=(1,\underbrace{0,1,0,1,0,1,\cdots,0,1}_{2n-2}).$
	\end{center}
Then by Lemma \ref{reducible} and Proposition \ref{son-socular}, $M_{I}(z (\xi_p+\xi_q))$ is reducible.	
  
  \item[2)] Suppose $n$ is odd. Then we have the follows.

		When $z=-n+3$, $(\lambda+\rho)^-=(-\frac{1}{2}n+\frac{7}{2},\cdots,\frac{1}{2}n-\frac{3}{2},-\frac{1}{2}n+\frac{3}{2},\cdots,\frac{1}{2}n-\frac{7}{2})$.
			Then we  have 
			$$p((\lambda+\rho)^-)^{\ev}=(2,\underbrace{1,1,1,1,\cdots,1}_{n-3},0,1).$$
					By Lemma \ref{reducible} and  Proposition \ref{son-socular},
 $M_I(z (\xi_1+\xi_{n-1}))$	is reducible.

     When $z=-n+2$, $(\lambda+\rho)^-=(-\frac{1}{2}n+2,\cdots,\frac{1}{2}n-1,-\frac{1}{2}n+1,\cdots,\frac{1}{2}n-2)$.
         Then we  have $$p((\lambda+\rho)^-)^{\ev}=(2,\underbrace{1,1,1,1,1,\cdots,1}_{n-2}).$$
          By Lemma \ref{reducible} and  Proposition \ref{son-socular},
 $M_I(z (\xi_1+\xi_{n-1}))$		is irreducible.

Therefore  $z=-n+3$ is the first reducible point when $n$ is odd.

		\item[3)] Suppose $n$ is even. Then we have the follows.
			
            
            
            
            
            
	
               When $n=4$, it is easy to find that
 the first reducible point of $M_I(z (\xi_1+\xi_{n-1}))$	is $z=-1$.

   When $z=-n+4$ ($n>4$),
			 $(\lambda+\rho)^-=(-\frac{1}{2}n+5,\cdots,\frac{1}{2}n-2,-\frac{1}{2}n+2,\cdots,\frac{1}{2}n-5)$.	Then we  have 
				\begin{center}
				   $p((\lambda+\rho)^-)^{\ev}=(2,\underbrace{1,1,1,1,\cdots,1}_{n-4},0,1,0,1).$ 
				\end{center}
			   By Lemma \ref{reducible} and  Proposition \ref{son-socular},
 $M_I(z (\xi_1+\xi_{n-1}))$	is reducible.
			
			When $z=-n+3$, $(\lambda+\rho)^-=(-\frac{1}{2}n+\frac{7}{2},\cdots,\frac{1}{2}n-\frac{3}{2},-\frac{1}{2}n+\frac{3}{2},\cdots,\frac{1}{2}n-\frac{7}{2})$.		Then we  have 
		\begin{center}
   $p((\lambda+\rho)^-)^{\ev}=(2,\underbrace{1,1,1,1,\cdots,1}_{n-3},1).$
			\end{center}
	   By Lemma \ref{reducible} and  Proposition \ref{son-socular},
 $M_I(z (\xi_1+\xi_{n-1}))$	is irreducible.

	Thus
	when $n$ is even, the first reducible point is:

 \begin{center}
$z=$    $\begin{cases}-1,  \qquad &\text{if}~n=4,\\ -n+4, \qquad &\text{if}~n\ge 6.
	\end{cases}$
 \end{center}

	\end{enumerate}


For the case $p=1$ and $q=n$,  the arguments are similar to the previous case. So we omit the details here.

\end{proof}

\begin{lem} 
		Let $ \mathfrak{g}=\mathfrak{so}(2n,\mathbb{C}) $ with $n\geq 4$. $M_{I}(\lambda)$ is a scalar generalized Verma module with highest weight $\lambda=z_1 \xi_p+z_2\xi_q$ $(p,q\in \{1,n-1,n\})$.
 Suppose $p=1$ and $q=n-1$ or $n$.  
  When $z_1=z_2=z\notin \mathbb{Z}$,  $M_{I}(z (\xi_p+\xi_q))$ is reducible if and only if
  $$  z\in  -\left\lceil {\frac {n}{2}}\right\rceil +\frac{3}{2}+\mathbb{Z}_{\ge 0}.$$   

\end{lem}

\begin{proof}
Similar to Lemma \ref{z1z2z}, we only need to consider the case $p=1,q=n-1$.

When $z\in \frac{1}{2}+\mathbb{Z}$, we  have $\lambda+\rho\in [\lambda]_3$.

 Denote $x:=\lambda+\rho$. Thus $\tilde{x}=(\frac{3}{2}z+n-1,-\frac{1}{2}z,-\frac{1}{2}z-1,-\frac{1}{2}z-2,\cdots,-\frac{1}{2}z-n+2)$. Then we have the following.
	\begin{enumerate}
		\item[1)] Suppose $n$ is odd. Then we have the follows.
		
   When $z> -\frac{1}{2}(n-1)$, we will have 
			$p(\widetilde{x})=(1,\underbrace{1,1,\cdots,1}_{n-1}).$
				Then by Proposition \ref{GKdim}, we have
 \begin{equation*}
\begin{split}
\GK\:L(z (\xi_1+\xi_{n-1}))&=n^2-
n-(1+2+3+4+\cdots+n-1)\\&=\frac{1}{2}n^2-\frac{1}{2}n<\dim(\mathfrak{u}).
\end{split}
\end{equation*} 
  So by Lemma \ref{reducible},  $ M_I(z (\xi_1+\xi_{n-1}))$	is reducible.
			
			When $z= -\frac{1}{2}n$, we will have 
			$p(\widetilde{x})=(2,\underbrace{1,1,\cdots,1}_{n-2}).$
			Then by Proposition \ref{GKdim}, we have
 $$\GK\:L(z (\xi_1+\xi_{n-1}))=\frac{1}{2}n^2+\frac{1}{2}n-1=\dim(\mathfrak{u}).$$ 
   So by Lemma \ref{reducible},  $M_I(z (\xi_1+\xi_{n-1}))$	is irreducible.

Therefore  $z=-\frac{n}{2}+1$ is the first reducible point of $M_I(z (\xi_1+\xi_{n-1}))$ for this case.

  \item[2)] Suppose $n$ is even. Then we have the follows.
		
   When $z> -\frac{1}{2}(n-1)$, similarly $M_I(z (\xi_1+\xi_{n-1}))$	is reducible.
			
			When $z= -\frac{1}{2}(n-1)$, we will have 
			$p(\widetilde{x})=(2,\underbrace{1,1,\cdots,1}_{n-2}).$
	Then by Proposition \ref{GKdim}, we have
 $$\GK\:L(z (\xi_1+\xi_{n-1}))=\frac{1}{2}n^2+\frac{1}{2}n-1=\dim(\mathfrak{u}).$$ 
 So  by Lemma \ref{reducible}, $M_I(z (\xi_1+\xi_{n-1}))$	is irreducible.

		Therefore  $z=-\frac{n}{2}+\frac{3}{2}$ is the first reducible point of $M_I(z (\xi_1+\xi_{n-1}))$ for this case.
	 
		\end{enumerate}

  When $z\notin \frac{1}{2}\mathbb{Z}$, we  have 
				\begin{center}
		    $x_1=(\frac{1}{2}z+n-2,\frac{1}{2}z+n-3\cdots,\frac{1}{2}z+1,-\frac{1}{2}z)$, $x_2=(\frac{3}{2}z+n-1)\in [\lambda]_3$.
		\end{center} 
		  So $p(\widetilde{x_1})=(1,\underbrace{1,1,\cdots,1}_{n-2})$ and $p(\widetilde{x_2})=(1).$ 
		  Then by Proposition \ref{GKdim}, we have 
 $$\GK\:L(z (\xi_1+\xi_{n-1}))=\frac{1}{2}n^2+\frac{1}{2}n-1=\dim(\mathfrak{u}).$$
   So by Lemma \ref{reducible},  $M_I(z (\xi_1+\xi_{n-1}))$	is irreducible.

    Therefore, when  $z\notin \mathbb{Z} $,
    $M_I(z (\xi_1+\xi_{n-1}))$	is reducible if and only if
    $z\in -\left\lceil {\frac {n}{2}}\right\rceil+\frac{3}{2}+\mathbb{Z}_{\ge 0}$.

\end{proof}
\begin{lem} 
		Let $ \mathfrak{g}=\mathfrak{so}(2n,\mathbb{C}) $ with $n\geq 4$. $M_{I}(\lambda)$ is a scalar generalized Verma module with highest weight $\lambda=z_1 \xi_p+z_2\xi_q$ $(p,q\in \{1,n-1,n\})$.
 Suppose $p=1$ and $q=n-1$ or $n$.  
  When $z_1, z_2 \in \mathbb{Z}$ and $z_1\neq z_2 $,  $M_{I}(z_1\xi_p+z_2\xi_q)$ is reducible if and only if one of the following holds:
 \begin{enumerate}
      \item $z_1>-1, z_2\in\mathbb{Z}$.
      \item $ z_1=-1,z_2\in -n+3+\mathbb{Z}_{\ge 0}$.
      \item $z_1<-1,z_2\in \begin{cases}
         -n+3+\mathbb{Z}_{\ge 0}, &\textnormal{if}~n=2k+1,\\
         -n+4+\mathbb{Z}_{\ge 0}, &\textnormal{if}~n=2k.
     \end{cases}$
     \end{enumerate}
 
\end{lem}

\begin{proof}
Similar to Lemma \ref{z1z2z}, we only need to consider the case $p=1,q=n-1$.

 When $z_1,z_2\in \mathbb{Z}$ and $z_1\neq z_2 $,  $\lambda+\rho$ is integral, then we have 
$$(\lambda+\rho)^-=(z_1+\frac{1}{2}z_2+n-1,\cdots,-\frac{1}{2}z_2,\frac{1}{2}z_2,\cdots,-z_1-\frac{1}{2}z_2-n+1).$$

\begin{enumerate}
	\item[1)] When $z_1>-1,z_2>-1$, we will have 
	
	\begin{center}
	    $p((\lambda+\rho)^-)^{\ev}=(\underbrace{1,0,1,0,\cdots,1,0}_{2n-2},1).$
	\end{center}
Then by Lemma \ref{reducible} and Proposition \ref{son-socular}, $M_{I}(z_1\xi_p+z_2\xi_q)$ is reducible.

	 When $z_1>-1,z_2=-1$, we will have 
	\begin{center}
	    $p((\lambda+\rho)^-)^{\ev}=(1,1,\underbrace{1,0,1,0,\cdots,1,0}_{2n-4}).$
	\end{center}
Then by Lemma \ref{reducible} and Proposition \ref{son-socular}, $M_{I}(z_1\xi_p+z_2\xi_q)$ is reducible.		
	
	 When $z_1>-1,z_2<-1$, we will have 
	
	\begin{center}
	    $\begin{cases}
		p((\lambda+\rho)^-)^{\ev}=(\underbrace{1,1,1,1,\cdots,1}_{n-1},0,1), \quad&\text{if}~ n=2k,
		\\ p((\lambda+\rho)^-)^{\ev}=(\underbrace{1,1,1,1,\cdots,1}_{n-1},1),
		\qquad&\text{if}~ n=2k+1.
	\end{cases}$
	\end{center}
Then by Lemma \ref{reducible} and Proposition \ref{son-socular}, $M_{I}(z_1\xi_p+z_2\xi_q)$ is reducible.		
	
	\item[2)] When $z_1\le -1,z_2>-1$, we will have 
	
	$p((\lambda+\rho)^-)^{\ev}=(1,1,\underbrace{1,0,1,0,\cdots,1,0}_{2n-6},1)$ 
	or $p((\lambda+\rho)^-)^{\ev}=(2,1,\underbrace{1,0,1,0,\cdots,1,0}_{2n-6},1).$
Then by Lemma \ref{reducible} and Proposition \ref{son-socular}, $M_{I}(z_1\xi_p+z_2\xi_q)$ is reducible.


		\item[3)] When $z_1=-1, z_2< -1$, we will have the following.
		
		When $z_2=-2$, we will have  
			\begin{center}
			    $\begin{cases}
				p((\lambda+\rho)^-)^{\ev}=(2,1), &\text{if}~n=3,\\p((\lambda+\rho)^-)^{\ev}=(2,1,1), &\text{if}~n=4, \\p((\lambda+\rho)^-)^{\ev}=(1,1,1,1,\underbrace{1,0,1,0\cdots,1}_{2n-9}),  &\text{if}~n\ge 5.
			\end{cases}$
			\end{center}
	Then by Lemma \ref{reducible} and Proposition \ref{son-socular}, we can get that $$\begin{cases}
			M_I(z_1\xi_1+z_2\xi_{n-1})~	\text{is} ~ \text {irreducible}, &~\text{if}~n=3,4,\\M_I(z_1\xi_1+z_2\xi_{n-1})~	\text{is} ~ \text {reducible}, &~\text{if}~n\ge 5.
		\end{cases}$$
  
		 When $z_2=-n+3$, we will have	
		\begin{center}
	$p((\lambda+\rho)^-)^{\ev}=(\underbrace{1,1,1,1,\cdots,1,1}_{n}).$
		\end{center}
Then by Lemma \ref{reducible} and Proposition \ref{son-socular}, $M_{I}(z_1\xi_p+z_2\xi_q)$ is reducible.		
  
		 When $z_2=-n+2$, we will have
		\begin{center}
  $p((\lambda+\rho)^-)^{\ev}=(2,\underbrace{1,1,1,\cdots,1}_{n-2}).$
		\end{center}
Then by Lemma \ref{reducible} and Proposition \ref{son-socular}, $M_{I}(z_1\xi_p+z_2\xi_q)$ is 	irreducible.

  Thus  $(z_1,z_2)=(-1,z_2)$ is a reducible point of $M_{I}(z_1\xi_p+z_2\xi_q)$ if and only if $z_2\in -n+3+\mathbb{Z}_{\ge 0}$.

		\item[4)] When $z_1<-1, z_2=-1$, we will have 
		  \begin{center}
		      $\begin{cases}
		  	p((\lambda+\rho)^-)^{\ev}=(2,1),\quad &\text{if}~n=3,\\p((\lambda+\rho)^-)^{\ev}=(2,1,1),\qquad &\text{if}~n=4, \\p((\lambda+\rho)^-)^{\ev}=(2,1,\underbrace{1,0,1,0\cdots,1}_{2n-5}), \quad &\text{if}~n\ge 5.
		  \end{cases}$
		  \end{center}
Then by Lemma \ref{reducible} and Proposition \ref{son-socular},
  we can get that $$\begin{cases}
			M_I(z_1\xi_1+z_2\xi_{n-1})~	\text{is} ~ \text {irreducible}, &~\text{if}~n=3,4,\\M_I(z_1\xi_1+z_2\xi_{n-1})~	\text{is} ~ \text {reducible}, &~\text{if}~n\ge 5.
		\end{cases}$$
		
		\item[5)] When $z_1<-1,z_2< -1$ and $n$ is odd, we will have the following.
		
		 When $(z_1,z_2)=(-d,-n+3)$ for some $d\in \mathbb{Z}_{>1}$, we will have 
			\begin{center}
			$p((\lambda+\rho)^-)^{\ev}=(2,\underbrace{1,1,1,\cdots,1}_{n-3},0,1).$
			\end{center}
Then by Lemma \ref{reducible} and Proposition \ref{son-socular}, $M_I(z_1 \xi_1+z_2\xi_{n-1})$	is reducible.
   
			When $(z_1,z_2)=(-d,-n+2)$, we will have 
			\begin{center}
			    $p((\lambda+\rho)^-)^{\ev}=(2,\underbrace{1,1,1,\cdots,1}_{n-2}).$
			\end{center}
Then by Lemma \ref{reducible} and Proposition \ref{son-socular}, $M_I(z_1 \xi_1+z_2\xi_{n-1})$	is irreducible.

  Thus when $z_1<-1,z_2< -1$ and $n$ is odd,  $(z_1,z_2)$ is a reducible point of $M_{I}(z_1\xi_p+z_2\xi_q)$ if and only if $z_2\in -n+3+\mathbb{Z}_{\ge 0}$.

	   \item[6)] When $z_1<-1,z_2< -1$ and $n$ is even, similarly we can find that $(z_1,z_2)$ is a reducible point of $M_{I}(z_1\xi_p+z_2\xi_q)$ if and only if $z_2\in -n+4+\mathbb{Z}_{\ge 0}$.

\end{enumerate}

This finishes the proof.


\end{proof}

\begin{lem} 
		Let $ \mathfrak{g}=\mathfrak{so}(2n,\mathbb{C}) $ with $n\geq 4$. $M_{I}(\lambda)$ is a scalar generalized Verma module with highest weight $\lambda=z_1 \xi_p+z_2\xi_q$ $(p,q\in \{1,n-1,n\})$.
 Suppose $p=1$ and $q=n-1$ or $n$.  
  When $z_1\notin \mathbb{Z}$ or $ z_2 \notin \mathbb{Z}$ and $z_1\neq z_2 $,  $M_{I}(z_1\xi_p+z_2\xi_q)$ is reducible if and only if
      \begin{enumerate}
      \item $ z_1\in \mathbb{Z}_{\ge 0}, z_2\notin \mathbb{Z}$.

      \item  $z_1\notin \mathbb{Z}$, 
      $ z_2\in \begin{cases}
         -n+3+\mathbb{Z}_{\ge 0}, &\textnormal{if}~n=2k+1,\\
         -n+4+\mathbb{Z}_{\ge 0}, &\textnormal{if}~n=2k.
     \end{cases}$
          \item $ z_1+z_2\in -n+2+\mathbb{Z}_{\ge 0}    \textnormal{~if}~z_1,z_2\notin \mathbb{Z}$.

  \end{enumerate}
   
\end{lem}

\begin{proof}
Similar to Lemma \ref{z1z2z}, we only need to consider the case $p=1,q=n-1$.

 When $z_1\in \mathbb{Z}$ and $z_2\notin \mathbb{Z}$, we  have 
    	$\lambda+\rho \in [\lambda]_3$.
     Denote $x:=\lambda+\rho$. Thus $$\tilde{x}=(z_1+\frac{1}{2}z_2+n-1,\frac{1}{2}z_2+n-2,\cdots,\frac{1}{2}z_2+1,\frac{1}{2}z_2).$$ Then we have the following.	
     
     \begin{enumerate}
    		\item[1)] When $z_1>-1$, we will have $p(\widetilde{x})=(1,\underbrace{1,1,\cdots,1}_{n-1}).$
    			Then by Proposition \ref{GKdim},
 $$\GK\:L(z_1 \xi_1+z_2\xi_{n-1})=\frac{1}{2}n^2-\frac{1}{2}n<\dim(\mathfrak{u}).$$ 
   So by Lemma \ref{reducible},  $M_I(z_1 \xi_1+z_2\xi_{n-1})$	is reducible.
    		
    		\item[2)] When $z_1=-1$, we will have $p(\widetilde{x})=(2,\underbrace{1,1,\cdots,1}_{n-2}).$
    			Then by Proposition \ref{GKdim},
 $$\GK\:L(z_1 \xi_1+z_2\xi_{n-1})=\frac{1}{2}n^2+\frac{1}{2}n-1=\dim(\mathfrak{u}).$$
  So by Lemma \ref{reducible},  $M_I(z_1 \xi_1+z_2\xi_{n-1})$	is irreducible.
    	\end{enumerate}
   
     Thus $M_I(z_1 \xi_1+z_2\xi_{n-1})$ is reducible if and only if $z_1\in \mathbb{Z}_{\geq 0}$ when $z_1\in \mathbb{Z},z_2\notin \mathbb{Z}$.
    	
    	 When $z_2\in \mathbb{Z}, z_1\notin \mathbb{Z}$, we will have the following. 
    	
    	\begin{enumerate}
    		\item[1)] When $z_1\in \frac{1}{2}+\mathbb{Z}$, we will have  $[\lambda]_3=\emptyset$.
    	   \begin{enumerate}
    	   	\item When $z_2>-1$, we will  have
    	   	$p((\lambda+\rho)^-_{(0)})^{\ev}+ p((\lambda+\rho)^-_{(\frac{1}{2})})^{\ev}=(2,\underbrace{0,1,\cdots,0,1}_{2n-4})$. 
    	     	Then by Proposition \ref{GKdim},
 \begin{equation*}
\begin{split}
\GK\:L(z_1 \xi_1+z_2\xi_{n-1})&=n^2-
n-(2+4+6+\cdots+2n-4)\\&=2n-2<\dim(\mathfrak{u}).
\end{split}
\end{equation*} 
  So by Lemma \ref{reducible},  $M_I(z_1 \xi_1+z_2\xi_{n-1})$	is reducible.
    	   	
    	   	\item When $z_2=-n+4$, we will have 
    	  \begin{center}  	
    	   	     $\begin{cases}
    	   	 	p((\lambda+\rho)^-_{(0)})^{\ev}+ p((\lambda+\rho)^-_{(\frac{1}{2})})^{\ev}=(2,\underbrace{1,1,\cdots,1,1}_{n-4},0,1,0,1), \quad &\text{if}~n=2k,\\ p((\lambda+\rho)^-_{(0)})^{\ev}+ p((\lambda+\rho)^-_{(\frac{1}{2})})^{\ev}=(2,\underbrace{1,1,\cdots,1,1}_{n-4},1,0,1), \quad &\text{if}~n=2k+1.
    	   	 \end{cases}$
    	   	 \end{center} 
            Then by Proposition \ref{GKdim},        when $n=2k$, we have
 $$\GK\:L(z_1 \xi_1+z_2\xi_{n-1})=\frac{1}{2}n^2+\frac{1}{2}n-4<\dim(\mathfrak{u}),$$  and 
when $n=2k+1$, we have
$$\GK\:L(z_1 \xi_1+z_2\xi_{n-1})=\frac{1}{2}n^2+\frac{1}{2}n-2<\dim(\mathfrak{u}).$$ So by Lemma \ref{reducible},  $M_I(z_1 \xi_1+z_2\xi_{n-1})$	is reducible.
    	   	 
    	   	\item When $z_2=-n+3$, we will have
    	 \begin{center}   	
    	   	$\begin{cases}
    	   		p((\lambda+\rho)^-_{(0)})^{\ev}+ p((\lambda+\rho)^-_{(\frac{1}{2})})^{\ev}=(2,\underbrace{1,1,\cdots,1,1}_{n-3},1), \quad &\text{if}~n=2k,\\ p((\lambda+\rho)^-_{(0)})^{\ev}+ p((\lambda+\rho)^-_{(\frac{1}{2})})^{\ev}=(2,\underbrace{1,1,\cdots,1,1}_{n-3},0,1), \quad &\text{if}~n=2k+1.
    	   	\end{cases}$
    	 \end{center}   	
    	   Then by Proposition \ref{GKdim},
          when $n=2k$, we have
 $$\GK\:L(z_1 \xi_1+z_2\xi_{n-1})=\frac{1}{2}n^2+\frac{1}{2}n-1=\dim(\mathfrak{u}),$$ and 
when $n=2k+1$, we have
$$\GK\:L(z_1 \xi_1+z_2\xi_{n-1})=\frac{1}{2}n^2+\frac{1}{2}n-2<\dim(\mathfrak{u}).$$

So by Lemma \ref{reducible},  we can get that
    	   	    $$\begin{cases}
    	M_I(z_1\xi_1+z_2\xi_{n-1}) ~	\text{is}~ \text{irreducible},  &~\text{if}~n=2k,\\
          M_I(z_1\xi_1+z_2\xi_{n-1}) ~	\text{is}~ \text{reducible},  &~\text{if}~n=2k+1.
    	   	\end{cases}$$
       
    	   	\item When $z_2=-n+2$, we have
    	   	$p((\lambda+\rho)^-_{(0)})^{\ev}+ p((\lambda+\rho)^-_{(\frac{1}{2})})^{\ev}=(2,\underbrace{1,1,\cdots,1,1}_{n-2})$. 
    	   	Then by Proposition \ref{GKdim},
 $$\GK\:L(z_1 \xi_1+z_2\xi_{n-1})=\frac{1}{2}n^2+\frac{1}{2}n-1=\dim(\mathfrak{u}).$$ 
 So by Lemma \ref{reducible},  $M_I(z_1 \xi_1+z_2\xi_{n-1})$	is irreducible.
    	   	
    	   \end{enumerate}
    
      Thus when $z_1\in \frac{1}{2}+\mathbb{Z}, z_2\in \mathbb{Z}$, $M_I(z_1 \xi_1+z_2\xi_{n-1})$ is reducible if and only if  \begin{center}
        $z_2=$  $\begin{cases}
    			-n+4, \quad &\text{if}~n=2k,\\ -n+3, \quad &\text{if}~n=2k+1.
    		\end{cases}$
      \end{center}

    		\item[2)] When $z_1\notin \frac{1}{2}+\mathbb{Z}$, we will have
    		
    		  \begin{center}
    		      $(z_1+\frac{1}{2}z_2+n-1)\in [\lambda]_3$,
    		
    		$(\frac{1}{2}z_2+n-2,\cdots,\frac{1}{2}z_2+1,-\frac{1}{2}z_2,\frac{1}{2}z_2,\cdots,-\frac{1}{2}z_2-n+2)\in [\lambda]_{1,2}$.
    		  \end{center}
    		
    		Then the result is the   same with the case for $z_1\in \frac{1}{2}+\mathbb{Z}$.

    	\end{enumerate}
    	So when $z_1\notin \mathbb{Z}$, and $ z_2\in \mathbb{Z}$, $M_I(z_1 \xi_1+z_2\xi_{n-1})$ is reducible if and only if      
     \begin{center}
       $z_2\in$  $\begin{cases}
    		 -n+4+\mathbb{Z}_{\ge0}, \quad &\text{if}~n=2k,\\  -n+3+\mathbb{Z}_{\ge 0}, \quad &\text{if}~n=2k+1.
    	\end{cases}$
     \end{center}

  	 When $z_1\notin \mathbb{Z}$ and $z_2\notin \mathbb{Z}$, we will have the following.

    	\begin{enumerate}
            \item[1)] When $z_1+z_2\in \mathbb{Z}$, we will have
    		$\lambda +\rho \in [\lambda]_3$.
            Denote $x:=\lambda+\rho$. Thus $$\tilde{x}=(z_1+\frac{1}{2}z_2+n-1,-\frac{1}{2}z_2, -\frac{1}{2}z_2-1,\cdots,-\frac{1}{2}z_2-n+2).$$ Then we have the following:			
    		\begin{center}
    		     $\begin{cases}
    			p(\widetilde{x})=(\underbrace{1,1,\cdots,1}_{n}),  &~\text{if}~z_1+z_2>-n+1,\\
    			p(\widetilde{x})=(2,\underbrace{1,1,\cdots,1}_{n-2}),  &~\text{if}~z_1+z_2\le -n+1.
    		\end{cases}$
    	
    		\end{center}
       Then by Proposition \ref{GKdim},   when $z_1+z_2>-n+1$, we have
 $$\GK\:L(z_1 \xi_1+z_2\xi_{n-1})=\frac{1}{2}n^2+\frac{1}{2}n-2<\dim(\mathfrak{u}),$$ and 
when $z_1+z_2\le -n+1$, we have
 $$\GK\:L(z_1 \xi_1+z_2\xi_{n-1})=\frac{1}{2}n^2+\frac{1}{2}n-1=\dim(\mathfrak{u}).$$
    So by Lemma \ref{reducible},  $M_I(z_1\xi_1+z_2\xi_{n-1})$	is reducible if and only if $z_1+z_2>-n+1$ when $z_1+z_2\in \mathbb{Z}$.

    		\item[2)] When
      $z_1+z_2\notin \mathbb{Z}$ and $z_1+\frac{1}{2}z_2\in \frac{1}{2}\mathbb{Z}$, we will have 
    			$$(z_1+\frac{1}{2}z_2+n-1)\in [\lambda]_{1,2} \text{~and~} 
    		x:=(\frac{1}{2}z_2+n-2,\cdots,\frac{1}{2}z_2+1,-\frac{1}{2}z_2)\in [\lambda]_3.$$
    		Thus $p(\widetilde{x})=(\underbrace{1,1,\cdots,1}_{n-1})$.
    		Then by Proposition \ref{GKdim},      $$\GK\:L(z_1 \xi_1+z_2\xi_{n-1})=\frac{1}{2}n^2+\frac{1}{2}n-1=\dim(\mathfrak{u}).$$ 
 So by Lemma \ref{reducible},  $M_I(z_1\xi_1+z_2\xi_{n-1})$	is irreducible.

    		\item[3)] When $z_1+z_2\notin \mathbb{Z}$ and $z_1+\frac{1}{2}z_2\notin \frac{1}{2}\mathbb{Z}$, we will have 
    		$$(z_1+\frac{1}{2}z_2+n-1)\text{~and}~ x:=(\frac{1}{2}z_2+n-2,\cdots,\frac{1}{2}z_2+1,-\frac{1}{2}z_2)\in [\lambda]_3.$$
    			 Thus $p(\widetilde{x})=(2,\underbrace{1,1,\cdots,1}_{n-2})$.
    			Then by Proposition \ref{GKdim},
 $$\GK\:L(z_1 \xi_1+z_2\xi_{n-1})=\frac{1}{2}n^2+\frac{1}{2}n-1=\dim(\mathfrak{u}).$$ 
 So  by Lemma \ref{reducible},  $M_I(z_1\xi_1+z_2\xi_{n-1})$	is irreducible.
    	\end{enumerate}
  
    Thus we can conclude that
    when $z_1\notin \mathbb{Z}$ and $z_2\notin \mathbb{Z}$, $M_I(z_1 \xi_1+z_2\xi_{n-1})$ is reducible if and only if         
    \begin{center}
       $z_1+z_2\in -n+2+\mathbb{Z}_{\ge 0}$. 
    \end{center}


This finishes the proof.
\end{proof}

Combined the above lemmas, we complete the proof of  part (1) in Theorem \ref{son-thm}.

\subsection{Reducibility when $p=n-1$ and $q=n$}

\begin{lem}\label{z1z2-so}
		Let $ \mathfrak{g}=\mathfrak{so}(2n,\mathbb{C}) $ with $n\geq 4$. $M_{I}(\lambda)$ is a scalar generalized Verma module with highest weight $\lambda=z_1\xi_{n-1}+z_2\xi_{n}$.
 When $z_1=z_2=z$,  $M_{I}(z (\xi_{n-1}+\xi_n))$ is reducible if and only if 
 $$z\in \begin{cases}
        -\frac{n}{2}+\frac{1}{2}+\frac{1}{2}\mathbb{Z}_{\ge 0}, &\textnormal{if}~n=2k+1,\\-\frac{n}{2}+1+\frac{1}{2}\mathbb{Z}_{\ge 0},&\textnormal{if}~n=2k.
    \end{cases} $$

\end{lem}

\begin{proof}
	
	We have  $\Delta^+(\frl) = \{\alpha_1,\alpha_2,\cdots,\alpha_{n-2}\}$, where $\alpha_i=e_i-e_{i+1}$ $(1\le i\le n-1)$ and  $\alpha_n=e_{n-1}+e_n$.
	Correspondingly we have
	$\xi_{n-1}=(\frac{1}{2},\frac{1}{2},\cdots,\frac{1}{2},-\frac{1}{2})$,
	$\xi_{n}=(\frac{1}{2},\frac{1}{2},\cdots,\frac{1}{2},\frac{1}{2})$ and 
	$$\lambda+\rho=(\frac{1}{2}(z_1+z_2)+n-1,\frac{1}{2}(z_1+z_2)+n-2,\cdots,\frac{1}{2}(z_1+z_2)+1,-\frac{1}{2}z_1+\frac{1}{2}z_2).$$
 
	 When  $z_1=z_2=z\in \mathbb{Z}$, equivalently $\lambda+\rho$ is integral, we have
	\begin{center}
		$\lambda+\rho=(z+n-1,z+n-2,\cdots,z+1,0)$.
	\end{center}
Then  we have the following.
	
	\begin{enumerate}
		\item[1)] Suppose $n$ is odd. Then we have the follows.
		
   When $z>-1$, we will have
			$$p((\lambda+\rho)^-)^{\ev}=(1,\underbrace{0,1,0,1\cdots,1}_{2n-2}).$$
 By Lemma \ref{reducible} and  Proposition \ref{son-socular},
 $M_I(z (\xi_{n-1}+\xi_{n}))$	is reducible.
			
			 When $z=-\lfloor \frac{n}{2} \rfloor=-\frac{n-1}{2}=-\frac{n}{2}+\frac{1}{2}$, we will have 
			$$p((\lambda+\rho)^-)^{\ev}=(2,\underbrace{1,1,\cdots,1}_{n-3},0,1).$$
				By Lemma \ref{reducible} and  Proposition \ref{son-socular},
 $M_I(z (\xi_{n-1}+\xi_{n}))$	is reducible.
			
			When $z=-\lfloor \frac{n}{2} \rfloor-1=-\frac{n-1}{2}-1$, we will have 
			$$p((\lambda+\rho)^-)^{\ev}=(2,\underbrace{1,1,\cdots,1}_{n-2}).$$
		By Lemma \ref{reducible} and  Proposition \ref{son-socular},
 $M_I(z (\xi_{n-1}+\xi_{n}))$	is irreducible.
		
		\item[2)] Suppose $n$ is even. Then we have the follows.
		
   When $z>-1$, it is the same with the case for $n$ being odd.
        So $M_I(z (\xi_1+\xi_{n-1}))$	is reducible.
			
			When  $z=-\frac{n}{2}+1$, we will have 
			$$p((\lambda+\rho)^-)^{\ev}=(2,\underbrace{1,1,\cdots,1}_{n-4},0,1,0,1).$$
By Lemma \ref{reducible} and  Proposition \ref{son-socular},
 $M_I(z (\xi_{n-1}+\xi_{n}))$	is reducible.
			
			When $z=-\frac{n}{2}$, we will have 
			$$p((\lambda+\rho)^-)^{\ev}=(2,\underbrace{1,1,\cdots,1}_{n-2}).$$
	By Lemma \ref{reducible} and  Proposition \ref{son-socular},
 $M_I(z (\xi_{n-1}+\xi_{n}))$	is irreducible.
	\end{enumerate}
 
	Thus when $z\in \mathbb{Z}$, the first reducible point of $M_I(z (\xi_{n-1}+\xi_{n}))$ is:
 \begin{center}
	  $z=\begin{cases}
		-\frac{n}{2}+\frac{1}{2},   &~\text{if}~n=2k+1,	\\
  -\frac{n}{2}+1,  &~\text{if}~n=2k.
	\end{cases}$
	\end{center}

 When $z\notin \mathbb{Z}$,  we have the following.
 
	\begin{enumerate}
	\item[1)] When $z\notin \frac{1}{2}+\mathbb{Z}$, we will have 
		
		\begin{center}
		    $(\lambda+\rho)_{(0)}=(0)$, $(\lambda+\rho)_{(\frac{1}{2})}=\emptyset$ and $x:=(z+n-1,z+n-2,\cdots,z+1)\in [\lambda]_3$.

		\end{center}
So $p(\widetilde{x})=(\underbrace{1,1,\cdots,1}_{n-1})$.
		Then by Proposition \ref{GKdim},
  $$\GK\:L(z(\xi_{n-1}+\xi_{n}))=\frac{1}{2}n^2+\frac{1}{2}n-1=\dim(\mathfrak{u}).$$
So by Lemma \ref{reducible},  $M_I(z(\xi_{n-1}+\xi_{n}))$	is irreducible.

  \item[2)] When $z\in \frac{1}{2}+\mathbb{Z}$, we will have 
	\begin{center}
		 $[\lambda]_3=\emptyset$,   $(\lambda+\rho)_{(0)}=(0)$ and $x:=(z+n-1,z+n-2,\cdots,z+1)=(\lambda+\rho)_{(\frac{1}{2})}$.
      \end{center}

			\begin{enumerate}
				\item Suppose $n$ is odd. Then we have the follows.
				
     When $z>-1$, we will have
					\begin{center}
				   $p((\lambda+\rho)^-_{(\frac{1}{2})})^{\ev}=(\underbrace{1,0,1,0,\cdots,0,1}_{2n-3})$. 
					\end{center}
			Then by Proposition \ref{GKdim},
 \begin{equation*}
\begin{split}
\GK\:L(z(\xi_{n-1}+\xi_{n}))&=n^2-
n-(2+4+\cdots+2n-4)\\&=2n-2<\dim(\mathfrak{u}).
\end{split}
\end{equation*}
So by Lemma \ref{reducible},  $ M_I(z(\xi_{n-1}+\xi_{n}))$	is reducible.
					
					When $z=-\frac{n}{2}+1$, we will have
					\begin{center}
					 $p((\lambda+\rho)^-_{(\frac{1}{2})})^{\ev}=(\underbrace{1,1,1,\cdots,1}_{n-2},0,1)$.
					\end{center}
	Then by Proposition \ref{GKdim},
 $$\GK\:L(z(\xi_{n-1}+\xi_{n}))=\frac{1}{2}n^2+\frac{1}{2}n-2<\dim(\mathfrak{u}).$$ 
 So by Lemma \ref{reducible},  $M_I(z(\xi_{n-1}+\xi_{n}))$	is reducible.
					
					When $z=-\frac{n}{2}$, we will have				
					\begin{center}
					    $p((\lambda+\rho)^-_{(\frac{1}{2})})^{\ev}=(\underbrace{1,1,\cdots,1}_{n-1})$.
					\end{center}
			Then by Proposition \ref{GKdim},
 $\GK\:L(z(\xi_{n-1}+\xi_{n}))=\frac{1}{2}n^2+\frac{1}{2}n-1=\dim(\mathfrak{u}).$ 
 
So by Lemma \ref{reducible},  $M_I(z(\xi_{n-1}+\xi_{n}))$	is irreducible.

				\item Suppose $n$ is even. Then we have the follows.
				
     When $z>-1$, it is the   same with the case for $n$ being odd, so  $M_I(z(\xi_{n-1}+\xi_{n}))$ is reducible.
					
					When $z=-\frac{n}{2}+\frac{3}{2}$, we will have
					
					\begin{center}
					    $p((\lambda+\rho)^-_{(\frac{1}{2})})^{\ev}=(\underbrace{1,1,1,\cdots,1}_{n-3},0,1,0,1)$.
					\end{center}
			Then by Proposition \ref{GKdim},
 \begin{equation*}
\begin{split}
\GK\:L(z(\xi_{n-1}+\xi_{n}))&=n^2-
n-(1+2+3+\cdots+n-4+n-2+n)\\&=\frac{1}{2}n^2+\frac{1}{2}n-4<\dim(\mathfrak{u}).
\end{split}
\end{equation*}
So by Lemma \ref{reducible},  $ M_I(z(\xi_{n-1}+\xi_{n}))$	is  reducible.
					
					When $z=-\frac{n}{2}+\frac{1}{2}$, we will have
					
					\begin{center}
					    $p((\lambda+\rho)^-_{(\frac{1}{2})})^{\ev}=(\underbrace{1,1,\cdots,1}_{n-2},1)$.
					\end{center}
		Then by Proposition \ref{GKdim},
 $$\GK\:L(z(\xi_{n-1}+\xi_{n}))=\frac{1}{2}n^2+\frac{1}{2}n-1=\dim(\mathfrak{u}).$$
So by Lemma \ref{reducible},   $M_I(z(\xi_{n-1}+\xi_{n}))$	is irreducible.
					
				\end{enumerate}

 Thus when $z\in \frac{1}{2}+\mathbb{Z}$, the first reducible point of $M_I(z(\xi_{n-1}+\xi_{n}))$ is:
  \begin{center}
      $z=$ 
			$\begin{cases}
				-\frac{n}{2}+1,  &~\text{if}~n=2k+1,\\-\frac{n}{2}+\frac{3}{2}, &~\text{if}~n=2k.
			\end{cases}$ 
   \end{center}

	\end{enumerate}

This finishes the proof.

\end{proof}

\begin{lem} 
	Keep notations as above.
 When $z_1, z_2 \in \mathbb{Z}$ and $z_1\neq z_2 $,  $M_{I}(z_1 \xi_{n-1}+z_2\xi_{n})$ is reducible if and only if

\begin{enumerate}
    \item $ z_1\in \mathbb{Z}_{\ge 0}, z_2\in \mathbb{Z}$.
    \item $z_2\in \mathbb{Z}_{\ge 0}, z_1\in \mathbb{Z}$.

    \item $z_1+z_2\in \begin{cases}
          -n+1+\mathbb{Z}_{\ge 0}, &\textnormal{if}~n=2k+1,\\
           -n+2+\mathbb{Z}_{\ge 0}, &\textnormal{if}~n=2k.
       \end{cases} $
\end{enumerate}

\end{lem}

\begin{proof}
 When $z_1\neq z_2$, we will have	
	$$\lambda+\rho=(\frac{1}{2}(z_1+z_2)+n-1,\frac{1}{2}(z_1+z_2)+n-2,\cdots,\frac{1}{2}(z_1+z_2)+1,-\frac{1}{2}z_1+\frac{1}{2}z_2).$$

	When $z_1$  and $z_2\in \mathbb{Z}$, equivalently $\lambda+\rho$ is integral, we have the following.
	
	\begin{enumerate}
		\item[1)] When $\max \{z_1,z_2\}<-1$, we will have the following.
		\begin{enumerate}
			\item Suppose $n$ is odd. Then we have the follows.
			
    When $z_1+z_2=-n+1$, we will have 
		$$p((\lambda+\rho)^-)^{\ev}=(2,\underbrace{1,1,\cdots,1}_{n-3},0,1).$$
	By Lemma \ref{reducible} and Proposition \ref{son-socular},
 $M_I(z_1\xi_{n-1}+z_2\xi_{n})$	is reducible.

				When $z_1+z_2=-n$, we will have $$p((\lambda+\rho)^-)^{\ev}=(2,\underbrace{1,1,\cdots,1}_{n-2}).$$
By Lemma \ref{reducible} and Proposition \ref{son-socular},
 $M_I(z_1\xi_{n-1}+z_2\xi_{n})$		is irreducible.

		\item Suppose $n$ is even. Then we have the follows. 
		
   When $z_1+z_2=-n+2$, we will have
		$$p((\lambda+\rho)^-)^{\ev}=(2,\underbrace{1,1,\cdots,1}_{n-4},0,1,0,1).$$
	By Lemma \ref{reducible} and Proposition \ref{son-socular},
 $M_I(z_1\xi_{n-1}+z_2\xi_{n})$		is reducible.

		When  $z_1+z_2=-n+1$, we will have 
			$$p((\lambda+\rho)^-)^{\ev}=(2,\underbrace{1,1,\cdots,1}_{n-3},1).$$
	By Lemma \ref{reducible} and Proposition \ref{son-socular},
 $M_I(z_1\xi_{n-1}+z_2\xi_{n})$		is irreducible.
		\end{enumerate}

	Thus when $\max \{z_1,z_2\}<-1$, $(z_1,z_2)$ is a  reducible point of $M_I(z_1\xi_{n-1}+z_2\xi_{n})$ if and only if    
  \begin{center}
      $z_1+z_2\in $
		$\begin{cases}
			-n+1+\mathbb{Z}_{\geq 0},  &~\text{if}~n=2k+1,\\-n+2+\mathbb{Z}_{\geq 0},  &~\text{if}~n=2k.
		\end{cases}$
  \end{center}

		\item[2)] When $z_1=-1$, we will have the following.
		\begin{enumerate}
			\item Suppose $n$ is odd. Then we have the follows.
			
    When $z_2>-1$, we will have 
		$$p((\lambda+\rho)^-)^{\ev}=(1,1,\underbrace{1,0,1,0,\cdots,1,0}_{2n-4}).$$
	By Lemma \ref{reducible} and Proposition \ref{son-socular},
 $M_I(z_1\xi_{n-1}+z_2\xi_{n})$		is reducible.

    When $z_2=-n+2$, we will have 
		$$p((\lambda+\rho)^-)^{\ev}=(2,\underbrace{1,1,\cdots,1}_{n-3},0,1).$$
	By Lemma \ref{reducible} and Proposition \ref{son-socular},
 $M_I(z_1\xi_{n-1}+z_2\xi_{n})$		is reducible.

    When If $z_2=-n+1$, we will have 
		$$p((\lambda+\rho)^-)^{\ev}=(2,\underbrace{1,1,\cdots,1}_{n-2}).$$
	By Lemma \ref{reducible} and Proposition \ref{son-socular},
 $M_I(z_1\xi_{n-1}+z_2\xi_{n})$		is irreducible.

			\item Suppose $n$ is even. Then we have the follows.
			
    When $z_2>-1$, the result is the   same with the case for $n$ being odd, so $M_I(z_1\xi_{n-1}+z_2\xi_{n})$	is reducible.
			
    When $z_2=-n+3$,  we will have
    $$p((\lambda+\rho)^-)^{\ev}=(2,\underbrace{1,1,\cdots,1}_{n-4},0,1,0,1).$$
	By Lemma \ref{reducible} and Proposition \ref{son-socular},
 $M_I(z_1\xi_{n-1}+z_2\xi_{n})$		is reducible.
	
   When $z_2=-n+2$, we will have				$$p((\lambda+\rho)^-)^{\ev}=(2,\underbrace{1,1,\cdots,1}_{n-3},1).$$
		By Lemma \ref{reducible} and Proposition \ref{son-socular},
 $M_I(z_1\xi_{n-1}+z_2\xi_{n})$		is irreducible.

		\end{enumerate}
	
  Thus  when $z_1=-1$, $(z_1,z_2)$ is a reducible point of $M_I(z_1\xi_{n-1}+z_2\xi_{n})$ if and only if  
		\begin{center}
		  $z_2\in \begin{cases}
			-n+2+\mathbb{Z}_{\geq 0},   &~\text{if}~n=2k+1.\\-n+3+\mathbb{Z}_{\geq 0},   &~\text{if}~n=2k.
		\end{cases}$
		\end{center}
	
	Similarly, when $z_2=-1$, $(z_1,z_2)$ is a  reducible point of $M_I(z_1\xi_{n-1}+z_2\xi_{n})$ if and only if  
		\begin{center}
		  $z_1\in \begin{cases}
			-n+2+\mathbb{Z}_{\geq 0},  &~\text{if}~n=2k+1,\\-n+3+\mathbb{Z}_{\geq 0},  \qquad &~\text{if}~n=2k.
		\end{cases}$
		\end{center}

	\item[3)] When $z_1>-1,z_1\neq z_2$, we will have the following.
	 
   When $z_1>z_2$,  we will have the following:	  	
$$p((\lambda+\rho)^-)^{\ev}=\begin{cases}
	  		(\underbrace{1,0,1,0,\cdots,1,0}_{2n-2},1),&~\text{if}~z_2>-1,
     \\
	  		(\underbrace{1,1,\cdots,1}_{n-s},0,1,\cdots), &~\text{if}~ -1\ge z_2\ge -n+2,
     \\
	  		(\underbrace{1,1,\cdots,1}_{n}),&~\text{if}~ z_2<-n+2.
	  	\end{cases}$$
By Lemma \ref{reducible} and Proposition \ref{son-socular},
 $M_I(z_1\xi_{n-1}+z_2\xi_{n})$		is reducible.

	   When $z_1<z_2$, we will have 
	  	$$p((\lambda+\rho)^-)^{\ev}=(\underbrace{1,0,1,0,\cdots,1,0}_{2n}).$$
By Lemma \ref{reducible} and Proposition \ref{son-socular},
 $M_I(z_1\xi_{n-1}+z_2\xi_{n})$		is reducible.

 Thus  $M_I(z_1\xi_{n-1}+z_2\xi_n)$	is reducible for all $z_1>-1$ and $z_2\neq z_1$.

	\item[4)] When $z_1<-1,z_2>-1$,  we  will have the following:	$$p((\lambda+\rho)^-)^{\ev}=\begin{cases}
		(\underbrace{1,1,\cdots,1}_{n}),&~\text{if}~z_1\le -n+1,\\
		(\underbrace{1,1,\cdots,1}_{n-s},0,1,\cdots),&~\text{if}~-1>z_1>-n+1, 
	\end{cases}$$
	where $0<s\le n-2$.
 By Lemma \ref{reducible} and Proposition \ref{son-socular},
 $M_I(z_1\xi_{n-1}+z_2\xi_{n})$		is reducible.

	\end{enumerate}

 Thus when $z_1,z_2\in \mathbb{Z}$, $(z_1,z_2)$ is a  reducible point of $M_I(z_1\xi_{n-1}+z_2\xi_{n})$ if and only if  
\begin{center}
    $z_1+z_2\in \begin{cases}
    -n+1+\mathbb{Z}_{\ge 0}, &~\text{if}~n=2k+1,\\-n+2+\mathbb{Z}_{\ge 0}, &~\text{if}~n=2k,
\end{cases}$
or $\begin{cases} z_2>-1,&~\text{if}~z_1<-1,\\ z_2\in \mathbb{Z}, &~\text{if}~z_1>-1.
\end{cases}$
\end{center}

\end{proof}

\begin{lem}\label{z1z2-so-2}
	Keep notations as above.
 When $z_1$ or $z_2 \notin \mathbb{Z}$ and $z_1\neq z_2 $,  $M_{I}(z_1 \xi_{n-1}+z_2\xi_{n})$ is reducible if and only if

\begin{enumerate}
    \item $ z_1\in \mathbb{Z}_{\ge 0}, z_2\notin \mathbb{Z}$.
    \item $z_2\in \mathbb{Z}_{\ge 0}, z_1\notin \mathbb{Z}$.

    \item $z_1+z_2\in \begin{cases}
          -n+1+\mathbb{Z}_{\ge 0}, &\textnormal{if}~n=2k+1,\\
           -n+2+\mathbb{Z}_{\ge 0}, &\textnormal{if}~n=2k.
       \end{cases} $
\end{enumerate}

\end{lem}

\begin{proof}
By the isomorphism of root systems of $D_n$, we only need to consider the case 
 $z_2\notin \mathbb{Z}$.

When $z_1\in \mathbb{Z}$ and  $z_2\notin \mathbb{Z}$, we will have
 $x:=\lambda+\rho \in [\lambda]_3$. Then we have the following.

		 \begin{enumerate}
		 	\item[1)] When $z_1>-1$, we will have 
		 	$p(\widetilde{x})=(\underbrace{1,1,\cdots,1}_{n})$.
		 	Then by Proposition \ref{GKdim},
 $$\GK\:L(z_1 \xi_{n-1}+z_2\xi_{n})=\frac{1}{2}n^2-\frac{1}{2}n<\dim(\mathfrak{u}).$$
  So by Lemma \ref{reducible},   $M_I(z_1\xi_{n-1}+z_2\xi_{n})$	is reducible.

    \item[2)] When $z_1\le-1$, we will have 
		 	$p(\widetilde{x})=(2,\underbrace{1,1,\cdots,1}_{n-2})$.
	 	Then by Proposition \ref{GKdim},
 $$\GK\:L(z_1 \xi_{n-1}+z_2\xi_{n})=\frac{1}{2}n^2+\frac{1}{2}n-1=\dim(\mathfrak{u}).$$
 So  by Lemma \ref{reducible}, $M_I(z_1\xi_{n-1}+z_2\xi_{n})$ is irreducible.
		 \end{enumerate}
		
Thus, by symmetry, we complete the proof of (1) and (2).	
		
		When $z_1\notin \mathbb{Z}$ and $z_2\notin \mathbb{Z}$, we will have the following.
		\begin{enumerate}

			\item When $z_1+z_2\notin \mathbb{Z}$ and $z_1-z_2\in \mathbb{Z}$, we have  $(-\frac{1}{2}z_1+\frac{1}{2}z_2)\in [\lambda]_{1,2}$ and 
    \begin{center}
       $x:=( \frac{1}{2}(z_1+z_2)+n-1,\frac{1}{2}(z_1+z_2)+n-2,\cdots,\frac{1}{2}(z_1+z_2)+1 ) \in [\lambda]_3$.
   \end{center}
	So $p(\widetilde{x})=(1,\underbrace{1,1,\cdots,1}_{n-2})$.			
			Then by Proposition \ref{GKdim},
 $$\GK\: L(z_1\xi_{n-1}+z_2\xi_n)=\frac{1}{2}n^2+\frac{1}{2}n-1=\dim(\mathfrak{u}).$$
 So by Lemma \ref{reducible},  $M_I(z_1\xi_{n-1}+z_2\xi_n)$	is  irreducible.
			
			\item When $z_1+z_2\notin \mathbb{Z}$ and $z_1-z_2\notin \mathbb{Z}$, we have 
    $$x_1:=( \frac{1}{2}(z_1+z_2)+n-1,\cdots,\frac{1}{2}(z_1+z_2)+1 ), x_2:=(-\frac{1}{2}z_1+\frac{1}{2}z_2)\in [\lambda]_3.$$
			So $p(\widetilde{x_1})=(1,\underbrace{1,1,\cdots,1}_{n-2})$ and $p(\widetilde{x_2})=(1)$.
	Then by Proposition \ref{GKdim},
 $$\GK\: L(z_1\xi_{n-1}+z_2\xi_n)=\frac{1}{2}n^2+\frac{1}{2}n-1=\dim(\mathfrak{u}).$$
  So by Lemma \ref{reducible}$, M_I(z_1\xi_{n-1}+z_2\xi_n)$	is  irreducible.

   \item When $z_1\pm z_2\in \mathbb{Z}$, we will have $[\lambda]_3=\emptyset$. Then we have the follows.
		
  \begin{enumerate}
			\item  When $z_1+z_2>-2$, we will have 
	\begin{center}
				    $p((\lambda+\rho)^-_{(0)})^{\ev}+p((\lambda+\rho)^-_{(\frac{1}{2})})^{\ev} =
         \begin{cases}
           (2,\underbrace{0,1,\cdots,0,1}_{2n-4}),&~\text{if}~z_2-z_1\le 0,\\ (1,1,\underbrace{1,0,1,0,\cdots,1}_{2n-5}),&~\text{if}~z_2-z_1> 0.
        \end{cases}$ 
				\end{center}
				
				Then by Proposition \ref{GKdim}, 
    \begin{center}
        $\GK\: L(\lambda)$=$\begin{cases}n^2-n-(2+4+\cdots+2n-4)=2n-2,&~\text{if}~z_2-z_1\le 0,\\
                n^2-n-(1+2+4+\cdots+2n-4)=2n-3,&~\text{if}~z_2-z_1> 0.
				\end{cases}$
    \end{center}
       So $\GK\: L(z_1\xi_{n-1}+z_2\xi_n)<\dim(\mathfrak{u})$. Thus by Lemma \ref{reducible},  $M_I(z_1\xi_{n-1}+z_2\xi_n)$	is  reducible.
				
			\item	When $z_1+z_2=-n+2$, we will have 
			$$p((\lambda+\rho)^-_{(0)})^{\ev}+p((\lambda+\rho)^-_{(\frac{1}{2})})^{\ev}= \begin{cases}
				(2,\underbrace{1,1,\cdots,1}_{n-4},1,0,1),  &~\text{if}~n=2k+1,\\
    (2,\underbrace{1,1,\cdots,1}_{n-4},0,1,0,1),  &~\text{if}~n=2k.
				\end{cases}$$				Then by Proposition \ref{GKdim},
    when $n=2k+1$, we will have
 $$\GK\: L(z_1\xi_{n-1}+z_2\xi_n)=\frac{1}{2}n^2+\frac{1}{2}n-2<\dim(\mathfrak{u}),$$
 and  when $n=2k$, we will have
 $$\GK\: L(z_1\xi_{n-1}+z_2\xi_n)=\frac{1}{2}n^2+\frac{1}{2}n-4<\dim(\mathfrak{u}).$$
 So by Lemma \ref{reducible}$, M_I(z_1\xi_{n-1}+z_2\xi_n)$	is  reducible.
				
		\item When $z_1+z_2=-n+1$, we will have 
				$$p((\lambda+\rho)^-_{(0)})^{\ev}+p((\lambda+\rho)^-_{(\frac{1}{2})})^{\ev}=\begin{cases}
				(2,\underbrace{1,1,\cdots,1}_{n-3},0,1),  &~\text{if}~n=2k+1,\\
    	(2,\underbrace{1,1,\cdots,1}_{n-2}),  &~\text{if}~n=2k.
				\end{cases}$$
			
			Then by Proposition \ref{GKdim},  when $n=2k+1$, we will have
 $$\GK\: L(z_1\xi_{n-1}+z_2\xi_n)=\frac{1}{2}n^2+\frac{1}{2}n-2<\dim(\mathfrak{u}),$$
     and  when $n=2k$, we will have
 $$\GK\: L(z_1\xi_{n-1}+z_2\xi_n)=\frac{1}{2}n^2+\frac{1}{2}n-1=\dim(\mathfrak{u}).$$
So by Lemma \ref{reducible}, we can get that 
  $$\begin{cases}
          M_I(z_1\xi_1+z_2\xi_{n-1}) ~	\text{is}~ \text{reducible},  &~\text{if}~n=2k+1,\\
          M_I(z_1\xi_1+z_2\xi_{n-1}) ~	\text{is}~ \text{irreducible},  &~\text{if}~n=2k.
    	   	\end{cases}$$

				\item When $z_1+z_2=-n$, we will have 
				
	\begin{center}
				    $p((\lambda+\rho)^-_{(0)})^{\ev}+p((\lambda+\rho)^-_{(\frac{1}{2})})^{\ev}=(2,\underbrace{1,1,\cdots,1}_{n-2})$ .
				\end{center}
				
				Then by Proposition \ref{GKdim},
 $$\GK\: L(z_1\xi_{n-1}+z_2\xi_n)=\frac{1}{2}n^2+\frac{1}{2}n-1=\dim(\mathfrak{u}).$$
  So by Lemma \ref{reducible}$, M_I(z_1\xi_{n-1}+z_2\xi_n)$	is  irreducible.
			\end{enumerate}

Therefore,  when $z_1\pm z_2\in \mathbb{Z}$, $(z_1,z_2)$ is a  reducible point of $M_I(z_1\xi_{n-1}+z_2\xi_{n})$ if and only if  
\begin{center}
    $z_1+z_2\in \begin{cases}
      -n+1+\mathbb{Z}_{\geq 0}, &~\text{if}~n=2k+1,\\
       -n+2+\mathbb{Z}_{\geq 0}, &~\text{if}~n=2k.
    \end{cases}$
\end{center}

\item When $z_1+z_2\in \mathbb{Z}$ and $z_1-z_2\notin \mathbb{Z}$, we have $(-\frac{1}{2}z_1+\frac{1}{2}z_2)\in [\lambda]_3$, and  the result is the   same with the case for $z_1\pm z_2\in \mathbb{Z}$.
				
		\end{enumerate}

\end{proof}

Combined the above lemmas, we complete the proof of  part (2) in Theorem \ref{son-thm}.

From the arguments in the proof of Theorem \ref{son-thm}, we can get the following result.

\begin{Cor}
For $ \mathfrak{g}=\mathfrak{so}(2n,\mathbb{C}) $, $M_{I}(\lambda)$ is a scalar generalized Verma module with highest weight $\lambda=z_1 \xi_p+z_2\xi_q$ $(p,q\in \{1,n-1,n\})$, where $\xi_p$ is the fundamental weight corresponding to $\alpha_{p}=e_p-e_{p+1}$, $\xi_q$ is the fundamental weight corresponding to $\alpha_{q}=e_q-e_{q+1}$. Then $M_{I}(\lambda)$ is irreducible if and only if 
there are two elements $x\in [\lambda]_{1,2}$ and $y\in [\lambda]_3$, such that one of the following holds:
\begin{enumerate}
    \item  $p(x^-)^{\ev}=(2, \underbrace{1,\cdots,1}_{n-2})$.
    \item $p(x^-)^{\ev}=(1, \underbrace{1,\cdots,1}_{n-2})$.
    \item $p(\widetilde{y})=(2, \underbrace{1,\cdots,1}_{n-2})$.
    \item $p(\widetilde{y})=(1, \underbrace{1,\cdots,1}_{n-2})$.
\end{enumerate}

\end{Cor}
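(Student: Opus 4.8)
The plan is to recast irreducibility as a statement about Gelfand--Kirillov dimension and then to extract the claim from the casework in the proof of the preceding theorem. By Lemma~\ref{reducible}, $M_{I}(\lambda)$ is irreducible if and only if $\GK L(\lambda)=\dim(\mathfrak{u})=\frac{1}{2}(n^{2}+n-2)$; since $L(\lambda)$ is a quotient of $M_{I}(\lambda)$, whose Gelfand--Kirillov dimension equals $\dim(\mathfrak{u})$ by the PBW filtration, one always has $\GK L(\lambda)\le\dim(\mathfrak{u})$. Substituting the formula of Proposition~\ref{GKdim}(2) and using $n^{2}-n-\dim(\mathfrak{u})=\frac{(n-1)(n-2)}{2}$, irreducibility becomes equivalent to
\[
F_{D}\big((\lambda+\rho)_{(0)}^{-}\big)+F_{D}\big((\lambda+\rho)_{(\frac{1}{2})}^{-}\big)+\sum_{x\in[\lambda]_{3}}F_{A}(\widetilde{x})=\frac{(n-1)(n-2)}{2},
\]
where, by the inequality above, the left-hand side is always $\ge\frac{(n-1)(n-2)}{2}$; so the task is to say exactly when this minimum is attained.

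First I would pin down the structure of $[\lambda]$ for $\lambda=z_{1}\xi_{p}+z_{2}\xi_{q}$ with $p,q\in\{1,n-1,n\}$. In each case $\lambda+\rho$ is, up to a common additive scalar, one exceptional entry followed by a strictly decreasing arithmetic progression of length $n-1$ and step $1$ (this is exactly what is written down at the start of Step~1 and Step~3 in the proof of the theorem). Consequently $[\lambda]$ has a single \emph{principal} block $B$ --- of length $n-1$ or $n$, whose image under the relevant surgery ($B^{-}$ when $B\in[\lambda]_{1,2}$, and $\widetilde{B}$ when $B\in[\lambda]_{3}$) is again essentially an arithmetic progression --- together with at most one further block, which is then a singleton $\{a\}$. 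A one-line check shows a singleton contributes nothing to the sum above: $(a,-a)$ has $F_{D}=0$, and $\widetilde{(a)}=(a)$ is a single box, so $F_{A}=0$. Hence the left-hand side of the displayed equation equals $F_{D}(B^{-})$ if $B\in[\lambda]_{1,2}$ and equals $F_{A}(\widetilde{B})$ if $B\in[\lambda]_{3}$, and irreducibility is equivalent to this single quantity being $\frac{(n-1)(n-2)}{2}$.

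It remains to identify which tableau shapes of the principal block realize the value $\frac{(n-1)(n-2)}{2}$. For the four listed shapes a direct computation gives $F_{D}$ (respectively $F_{A}$) equal to $0\cdot 2+1+2+\dots+(n-2)=\frac{(n-1)(n-2)}{2}$ when $p(x^{-})^{ev}$ or $p(\widetilde{y})$ is $(2,\underbrace{1,\dots,1}_{n-2})$, and equal to $0+1+\dots+(n-2)=\frac{(n-1)(n-2)}{2}$ when it is $(\underbrace{1,\dots,1}_{n-1})$; by the previous paragraph each of these forces irreducibility. For the converse one must check that no other shape of the principal block gives this value, and here I would invoke the proof of the preceding theorem directly: in each of its sub-cases ($p=1$ with $q=n-1$ or $n$, and $p=n-1$, $q=n$; $z_{1}=z_{2}$ or $z_{1}\ne z_{2}$; integral, half-integral, and generic parameters) the proof exhibits the Young tableau of the principal block at the first reducible point and at the parameter value immediately preceding it, and the unique irreducible parameter in each family is the latter, at which $p((\lambda+\rho)^{-})^{ev}$ or $p(\widetilde{x})$ is precisely one of the two shapes above. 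Collecting these computations over all cases yields the asserted list. The main obstacle is exactly this last verification: one has to be certain the four shapes are exhaustive, i.e.\ that no further shape of the principal block occurs at an irreducible point, and this cannot be dispatched by a soft extremality argument, since the principal block's tableau can acquire more than two columns --- it genuinely requires the full case analysis carried out in the theorem's proof.
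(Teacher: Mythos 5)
Your proposal is correct and follows essentially the same route as the paper: the paper derives this corollary directly from the case-by-case computations in the proof of the type $D_n$ theorem, which is exactly what you do for the necessity direction, while your sufficiency argument (GK dimension is maximal iff the deficit attains its minimum $\frac{(n-1)(n-2)}{2}$, singleton blocks contribute nothing, and the four listed shapes each realize that minimum) is just a slightly more explicit packaging of the same computations via Lemma \ref{reducible} and Proposition \ref{GKdim}.
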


\begin{example}
  Let $\mathfrak{g}=\mathfrak{so}(12,\mathbb{C})$, we consider the parabolic subalgebra $\frq$ corresponding to the subset $\Pi\setminus\{\alpha_1, \alpha_5\}$.
	In this case, the scalar generalized Verma module $M_{I}(z_1\xi_1+z_2\xi_5)$ is reducible if and only if one of the following holds.
 \begin{enumerate}
    \item  
     $z_1\in \mathbb{Z}_{\ge 0}, z_2\in \mathbb{C}$.
     \item $z_1+z_2\in -4+\mathbb{Z}_{\ge 0}$ if
     $z_1=-1~\text{or}~ z_1,z_2\notin \mathbb{Z}$.
     \item 
     $z_1=z_2\in -\frac{3}{2}+\mathbb{Z}_{\ge 0} $.
     \item $z_2\in -2+\mathbb{Z}_{\ge 0}$ if
     $z_1\in \mathbb{C}~\text{or}~ z_1=z_2\in \mathbb{Z}$.
 \end{enumerate}

 We can draw these reducible points in the following complex plane $\mathbb{C}^2$:

 \hspace{1cm}
 
\begin{center}
\psset{unit=1.3cm}
    \begin{pspicture}(-5,-5)(2.4,2.4)
        \psset{linecolor=black}

        \cnode*(0,0){0.06}{a1}
        \cnode*(-0.8,-0.8){0.06}{d1}
        \cnode*(-1.6,-1.6){0.06}{d3}
        \cnode*(0.8,0){0.06}{d10}
        \cnode*(-0.8,-0.8){0.06}{e3}

        \psline[linewidth=1pt](-5.3,2.1)(2.1,-5.3)
        \psline[linewidth=1pt](-4.5,2.1)(2.1,-4.5)
        \psline[linewidth=1pt](-3.7,2.1)(2.1,-3.7)
        \psline[linewidth=1pt](-2.9,2.1)(2.1,-2.9)
        \psline[linewidth=1pt](-2.1,2.1)(2.1,-2.1)
        \psline[linewidth=1pt](-1.3,2.1)(2.1,-1.3)
        \psline[linewidth=1pt](-0.5,2.1)(2.1,-0.5)
        \psline[linewidth=1pt](0.3,2.1)(2.1,0.3)
        \psline[linewidth=1pt](1.1,2.1)(2.1,1.1)
        \psline[linewidth=1pt](-5.4,-0.8)(2.2,-0.8)
        \psline[linewidth=1pt](-5.4,-1.6)(2.2,-1.6)
        \psline[linewidth=1pt](-5.4,0)(2.2,0)
        \psline[linewidth=1pt](-5.4,0.8)(2.2,0.8)
        \psline[linewidth=1pt](-5.4,1.6)(2.2,1.6)
        \psline[linewidth=1pt](0.,2.2)(0,-5.4)
        \psline[linewidth=1pt](1.6,2.2)(1.6,-5.4)
        \psline[linewidth=1pt](0.8,2.2)(0.8,-5.4)

        \uput[u](0,2.){\scriptsize{$z_1=0$}}
        \uput[r](2.05,0){\scriptsize{$z_2=0$}}
           \uput[r](2.05,-1.6){\scriptsize{$z_2=-2$}}
        \uput[d](-0.1,0.0){\tiny(0,0)}
        \uput[d](-0.8,-0.76){\tiny(-1,-1)}
        \uput[d](-1.6,-1.56){\tiny(-2,-2)}
        \uput[d](0.95,0.0){\tiny(1,0)}

        \end{pspicture}
\end{center}

So the reducible points contain the following set:
\begin{enumerate}
     \item  
     $z_1\in \mathbb{Z}_{\ge 0}, z_2\in \mathbb{C}$.
     \item $z_2\in -2+\mathbb{Z}_{\ge 0}$, $z_1\in \mathbb{C}$.
     \item 
     $z_1+z_2\in -4+\mathbb{Z}_{\ge 0}$.
     
\end{enumerate}

\end{example}

\begin{example}
  Let $\mathfrak{g}=\mathfrak{so}(14,\mathbb{C})$, We consider the parabolic subalgebra $\frq$ corresponding to the subset $\Pi\setminus\{\alpha_6, \alpha_7\}$.
	In this case, the scalar generalized Verma module $M_{I}(z_1\xi_6+z_2\xi_7)$ is reducible if and only if one of the following holds.
 \begin{enumerate}
     \item $z_1\in \mathbb{Z}_{\ge 0}, z_2\in \mathbb{C}$.
     \item $z_2\in \mathbb{Z}_{\ge 0}, z_1\in \mathbb{C}$.
     \item $(z_1,z_2)\in \mathbb{C}\times\mathbb{C}, ~\text{and}~ z_1+z_2\in 
          -6+\mathbb{Z}_{\ge 0}$.
     \item $z_1=z_2\in -3+\frac{1}{2}\mathbb{Z}_{\ge 0}$.
 \end{enumerate}
    
    We can draw these reducible points in the following complex plane $\mathbb{C}^2$: 

\hspace{1cm}

\begin{center}
\psset{unit=1.2cm}
    \begin{pspicture}(-6.8,-6.8)(2.4,2.4)
        \psset{linecolor=black}
        \cnode*(0,0){0.06}{e1}
        \cnode*(-0.8,-0.8){0.06}{g1}
        \cnode*(-1.6,-1.6){0.06}{i1}
        \cnode*(-2.4,-2.4){0.06}{k1}
        \cnode*(0.8,0){0.06}{c}

        \psline[linewidth=1pt](-7,0)(2.2,0)
        \psline[linewidth=1pt](-6.9,2.1)(2.1,-6.9)
        \psline[linewidth=1pt](-6.1,2.1)(2.1,-6.1)
        \psline[linewidth=1pt](-5.3,2.1)(2.1,-5.3)
        \psline[linewidth=1pt](-4.5,2.1)(2.1,-4.5)
        \psline[linewidth=1pt](-3.7,2.1)(2.1,-3.7)
        \psline[linewidth=1pt](-2.9,2.1)(2.1,-2.9)
        \psline[linewidth=1pt](-2.1,2.1)(2.1,-2.1)
        \psline[linewidth=1pt](-1.3,2.1)(2.1,-1.3)
        \psline[linewidth=1pt](-0.5,2.1)(2.1,-0.5)
        \psline[linewidth=1pt](0.3,2.1)(2.1,0.3)
        \psline[linewidth=1pt](1.1,2.1)(2.1,1.1)
        \psline[linewidth=1pt](0,2.2)(0,-7)
        \psline[linewidth=1pt](-7,0.8)(2.2,0.8)
        \psline[linewidth=1pt](-7,1.6)(2.2,1.6)
        \psline[linewidth=1pt](0.8,2.2)(0.8,-7)
        \psline[linewidth=1pt](1.6,2.2)(1.6,-7)

        \uput[u](0,2.05){\tiny{$z_1=0$}}
        \uput[r](2.05,0){\scriptsize{$z_2=0$}}
        \uput[d](-0.1,0.1){\tiny(0,0)}
        \uput[d](-0.8,-0.70){\tiny(-1,-1)}
        \uput[d](0.7,0.1){\tiny(1,0)}
        \uput[d](-2.4,-2.3){\tiny(-3,-3)}
        
        \end{pspicture}
\end{center}

So the reducible points contain the following set:
\begin{enumerate}
     \item  
     $z_1\in \mathbb{Z}_{\ge 0}, z_2\in \mathbb{C}$.
     \item $z_2\in \mathbb{Z}_{\ge 0}$, $z_1\in \mathbb{C}$.
     \item 
     $z_1+z_2\in -6+\mathbb{Z}_{\ge 0}$.
     
\end{enumerate}

\end{example}

\section{Reducibility of scalar  generalized Verma modules for type $E_{6}$}
In this section, we will give the reducible points for type $E_6$. 

Let $M_{I}(\lambda)$ be a scalar generalized Verma module with highest weight $\lambda=z_1 \xi_1+z_2\xi_6$. 
From \S \ref{nilpotent}, we know that $\dim(\fru)=24$ and $M_{I}(z (\xi_1+\xi_6))$ is reducible if and only if $\GK\: L(z_1\xi_{1}+z_2\xi_6)<24$ by Lemma \ref{reducible}, equivalently ${\bf a}({\lambda})>12$.

\begin{lem}\label{e6z1z2z} 
		Let $ \mathfrak{g}=E_6$. $M_{I}(\lambda)$ is a scalar generalized Verma module with highest weight $\lambda=z_1 \xi_1+z_2\xi_6$.
   When $z_1=z_2=z\in \mathbb{Z}$,  $M_{I}(z (\xi_1+\xi_6))$ is reducible if and only if
       \begin{center}
           $z\in -3+\mathbb{Z}_{\ge 0}.$
       \end{center}

\end{lem}
\begin{proof}

Since 
	$\rho=\xi_1+\cdots+\xi_6$, we have
	\begin{align*}
	    \lambda+\rho&=(1+z_1)\xi_1+\xi_2+\cdots+\xi_5+(1+z_2)\xi_6:=[1+z_1,1,1,1,1,1+z_2]\\
     &=(0,1,2,3,z_2+4,-\frac{2z_1+z_2}{3}-4,-\frac{2z_1+z_2}{3}-4,\frac{2z_1+z_2}{3}+4).
	\end{align*}
	
	 When $z_1=z_2=z$,    we  have
	\begin{center}
		$\lambda+\rho=[1+z,1,1,1,1,1+z]=(0,1,2,3,z+4,-z-4,-z-4,z+4)$.
		
	\end{center}

	When $z\in \mathbb{Z}$, equivalently $\lambda+\rho$ is integral,  we have the following.
	\begin{enumerate}
	\item[1)] When $z=-3$,  we will have
$\lambda+\rho=[-2,1,1,1,1,-2]=(0,1,2,3,1,-1,-1,1)$. From Lemma \ref{w-lambda}, we have $w_{\lambda}=[4,3,4,2,3,4,5,1,3,4,5,2,3,4,1,3,2,0,2,3,4,5,1,3,4,2,3,0]$. By using PyCox, we have ${\bf a}(w_{\lambda})=15>12$. Thus $M_{I}(z (\xi_1+\xi_6))$ is reducible.
 
\item[2)] When $z=-4$,  we will have
$\lambda+\rho=[-3,1,1,1,1,-3]=(0,1,2,3,0,0,0,0)$. From Lemma \ref{w-lambda}, we have $w_{\lambda}=[4,3,4,2,3,4,1,3,4,5,2,3,4,1,3,0,2,3,4,1,3]$. By using PyCox, we have ${\bf a}(w_{\lambda})=12$. Thus $M_{I}(z (\xi_1+\xi_6))$ is irreducible.

\end{enumerate}

Therefore, by Lemma \ref{GKdown}, $M_{I}(z (\xi_1+\xi_6))$ is reducible if and only if
       \begin{center}
           $z\in -3+\mathbb{Z}_{\ge 0}.$
       \end{center}

\end{proof}

\begin{lem} 
		Let $ \mathfrak{g}=E_6$. $M_{I}(\lambda)$ is a scalar generalized Verma module with highest weight $\lambda=z_1 \xi_1+z_2\xi_6$.
  When $z_1=z_2=z\notin \mathbb{Z}$,  $M_{I}(z (\xi_1+\xi_6))$ is reducible if and only if
$$z\in -\frac{7}{2}+\mathbb{Z}_{\ge 0}.$$

\end{lem}
\begin{proof}
    When $z_1=z_2=z$,    we  have
	\begin{center}
		$\lambda+\rho=[1+z,1,1,1,1,1+z]=(0,1,2,3,z+4,-z-4,-z-4,z+4)$.
		
	\end{center}
When $z\notin \mathbb{Z}$, equivalently $\lambda+\rho$ is not integral,  we have the following.
	\begin{enumerate}
	\item[1)] When $z\notin \frac{1}{2}\mathbb{Z}$, we have $\Delta_{[\lambda]}\simeq D_4$ (denote this isomorphism by $\phi$) and the simple system is $\Pi_{[\lambda]}=\{\alpha_2,\alpha_3,\alpha_4,\alpha_5\}$.
Denote $\bar{\lam}'=\phi(\lam|_{\mathfrak{h}_{\Delta_{[\lambda]}}^*})$. Then $\bar{\lam}'|_{D_4}=\xi_1+\xi_2+\xi_3+\xi_4=(3,2,1,0)$. Thus ${\bf a}(\lambda)={\bf a}(\bar{\lam}'|_{D_4})=12$ by Proposition \ref{integral}, which implies that $M_{I}(z (\xi_1+\xi_6))$ is irreducible.

\item[2)] When $z\in \frac{1}{2}+\mathbb{Z}$, we have the following. 
\begin{enumerate}
	\item When $z=-\frac{9}{2}$, we have $\Delta_{[\lambda]}\simeq D_5$ (denote this isomorphism by $\phi$) and the simple system is $\Pi_{[\lambda]}=\{\frac{1}{2}(-e_1-e_2-e_3-e_4+e_5-e_6-e_7+e_8),\alpha_2,\alpha_4,\alpha_5,\alpha_3\}$.
Denote $\bar{\lam}'=\phi(\lam|_{\mathfrak{h}_{\Delta_{[\lambda]}}^*})$. Then $\bar{\lam}'|_{D_5}=-4\xi_1+\xi_2+\xi_3+\xi_4+\xi_5=(-1,3,2,1,0)$. Thus ${\bf a}(\lambda)={\bf a}(\bar{\lam}'|_{D_5})=12$ by Proposition \ref{integral}, which implies that $M_{I}(z (\xi_1+\xi_6))$ is irreducible.

\item When $z=-\frac{7}{2}$, we have $\Delta_{[\lambda]}\simeq D_5$ (denote this isomorphism by $\phi$) and the simple system is $\Pi_{[\lambda]}=\{\gamma,\alpha_2,\alpha_4,\alpha_5,\alpha_3\}$ where $\gamma=\frac{1}{2}(-e_1-e_2-e_3-e_4+e_5-e_6-e_7+e_8)$.
Denote $\bar{\lam}'=\phi(\lam|_{\mathfrak{h}_{\Delta_{[\lambda]}}^*})$. Then $\bar{\lam}'|_{D_5}=-2\xi_1+\xi_2+\xi_3+\xi_4+\xi_5=(1,3,2,1,0)$. Thus ${\bf a}(\lambda)={\bf a}(\bar{\lam}'|_{D_5})=13>12$ by Proposition \ref{integral}, which implies that $M_{I}(z (\xi_1+\xi_6))$ is reducible.
\end{enumerate}
\end{enumerate}

Therefore, by Lemma \ref{GKdown}, $M_{I}(z (\xi_1+\xi_6))$ is reducible if and only if
       $$z\in -\frac{7}{2}+\mathbb{Z}_{\ge 0}.$$
 
\end{proof}



\begin{lem} 
		Let $ \mathfrak{g}=E_6$. $M_{I}(\lambda)$ is a scalar generalized Verma module with highest weight $\lambda=z_1 \xi_1+z_2\xi_6$.
  When $z_1, z_2 \in \mathbb{Z}$ and $z_1\neq z_2 $,  $M_{I}(z_1\xi_1+z_2\xi_6)$ is reducible if and only if one of the following holds:
 \begin{enumerate}
      \item $z_1\in -3+\mathbb{Z}_{\geq 0}$ and $z_2\in\mathbb{Z}$.
      \item $z_2\in -3+\mathbb{Z}_{\geq 0}$ and $z_1\in \mathbb{Z}$.
     \end{enumerate}
 
\end{lem}
\begin{proof}
    Recall that we  have
	\begin{align*}
	    \lambda+\rho&=(1+z_1)\xi_1+\xi_2+\cdots+\xi_5+(1+z_2)\xi_6:=[1+z_1,1,1,1,1,1+z_2]\\
     &=(0,1,2,3,z_2+4,-\frac{2z_1+z_2}{3}-4,-\frac{2z_1+z_2}{3}-4,\frac{2z_1+z_2}{3}+4).
	\end{align*}
When $z_1$, $z_2 \in \mathbb{Z}$ and $z_1\neq z_2 $, we have the following.
\begin{enumerate}
\item[1)] When $z_1=-4$ and $z_2=-3$,  we will have
$\lambda+\rho=[-3,1,1,1,1,-2]=(0,1,2,3,1,-\frac{1}{3},-\frac{1}{3},\frac{1}{3})$. From Lemma \ref{w-lambda}, we have $w_{\lambda}=[4,3,4,2,3,4,5,1,3,4,5,2,3,4,1,3,2,0,2,3,4,5,1,3,2]$. By using PyCox, we have ${\bf a}(w_{\lambda})=13>12$. Thus $M_{I}(z_1\xi_1+z_2\xi_6)$ is reducible.

\item[2)] When $z_1=-5$ and $z_2=-3$,  we will have
$\lambda+\rho=[-4,1,1,1,1,-2]=(0,1,2,3,1,\frac{1}{3},\frac{1}{3},-\frac{1}{3})$. From Lemma \ref{w-lambda}, we have $w_{\lambda}=[4,3,4,2,3,4,5,1,3,4,5,2,3,4,1,3,2,0,2,3,4,1]$. By using PyCox, we have ${\bf a}(w_{\lambda})=13>12$. Thus $M_{I}(z_1\xi_1+z_2\xi_6)$ is reducible.

\item[3)] When $z_1=-6$ and $z_2=-3$,  we will have
$\lambda+\rho=[-5,1,1,1,1,-2]=(0,1,2,3,1,1,1,-1)$. From Lemma \ref{w-lambda}, we have $w_{\lambda}=[4,3,4,2,3,4,5,1,3,4,5,2,3,4,1,3,2,0,2,3]$. By using PyCox, we have ${\bf a}(w_{\lambda})=13>12$. Thus $M_{I}(z_1\xi_1+z_2\xi_6)$ is reducible.

\item[4)] When $z_1=-7$ and $z_2=-3$,  we will have
$\lambda+\rho=[-6,1,1,1,1,-2]=(0,1,2,3,1,\frac{5}{3},\frac{5}{3},-\frac{5}{3})$. From Lemma \ref{w-lambda}, we have $w_{\lambda}=[4,3,4,2,3,4,5,1,3,4,5,2,3,4,1,3,2,0]$. By using PyCox, we have ${\bf a}(w_{\lambda})=13>12$. Thus $M_{I}(z_1\xi_1+z_2\xi_6)$ is reducible.
 
\item[5)] When $z_1\leq -8$ and $z_2=-3$,  we will have
$\lambda+\rho=[z_1+1,1,1,1,1,-2]=(0,1,2,3,1,-3-\frac{2z_1}{3},-3-\frac{2z_1}{3},3+\frac{2z_1}{3})$. From Lemma \ref{w-lambda}, we have $w_{\lambda}=[4,3,4,2,3,4,5,1,3,4,5,2,3,4,1,3,2]$. By using PyCox, we have ${\bf a}(w_{\lambda})=13>12$. Thus $M_{I}(z_1\xi_1+z_2\xi_6)$ is reducible.

\item[6)] When $z_1\geq -2$ and $z_2=-3$ or $z_2\geq -2$ and $z_1=-3$,   $M_{I}(z_1\xi_1+z_2\xi_6)$ is reducible by Lemma \ref{e6z1z2z} and Lemma \ref{GKdown}.

\item[7)] When $z_1\leq -5$ and $z_2=-4$,   $M_{I}(z_1\xi_1+z_2\xi_6)$ is irreducible by Lemma \ref{e6z1z2z} and Lemma \ref{GKdown} since $M_{I}(-4\xi_1-4\xi_6)$ is irreducible.

\end{enumerate}
By the symmetry of $z_1$ and $z_2$, we can get that 
$M_{I}(z_1\xi_1+z_2\xi_6)$ is irreducible if and only if
$z_1\leq -4$ and $z_2\leq -4$ when $z_1$, $z_2 \in \mathbb{Z}$ and $z_1\neq z_2 $.
Equivalently, $M_{I}(z_1\xi_1+z_2\xi_6)$ is reducible if and only if
$z_1\geq -3$ or $z_2\geq -3$ when $z_1$, $z_2 \in \mathbb{Z}$ and $z_1\neq z_2 $.

\end{proof}

\begin{lem} 
	Keep notations as above.
 When $z_1$ or $z_2 \notin \mathbb{Z}$ and $z_1\neq z_2 $,  $M_{I}(z_1 \xi_1+z_2\xi_6)$ is reducible if and only if one of the following holds:

\begin{enumerate}
    \item $ z_2\notin \mathbb{Z}$ and $z_1\in -3+\mathbb{Z}_{\ge 0}$.
    \item $ z_1\notin \mathbb{Z}$ and $z_2\in -3+\mathbb{Z}_{\ge 0}$.

    \item $z_1+z_2\in -7+\mathbb{Z}_{\ge 0}.$
\end{enumerate}

\end{lem}

\begin{proof}
    Recall that we  have
	\begin{align*}
	    \lambda+\rho&=(1+z_1)\xi_1+\xi_2+\cdots+\xi_5+(1+z_2)\xi_6:=[1+z_1,1,1,1,1,1+z_2]\\
     &=(0,1,2,3,z_2+4,-\frac{2z_1+z_2}{3}-4,-\frac{2z_1+z_2}{3}-4,\frac{2z_1+z_2}{3}+4).
	\end{align*}

When $z_1$ or $z_2 \notin \mathbb{Z}$ and $z_1\neq z_2 $, we have the following.
\begin{enumerate}
	\item[1)] When $ z_2\notin \mathbb{Z}$ and $z_1\in \mathbb{Z}$, we have $\Delta_{[\lambda]}\simeq D_5$ (denote this isomorphism by $\phi$) and the simple system is $\Pi_{[\lambda]}=\{\alpha_1,\alpha_3,\alpha_4,\alpha_2,\alpha_5\}$.
Denote $\bar{\lam}'=\phi(\lam|_{\mathfrak{h}_{\Delta_{[\lambda]}}^*})$. Then $\bar{\lam}'|_{D_5}=(1+z_1)\xi_1+\xi_2+\xi_3+\xi_4+\xi_5=(4+z_1,3,2,1,0)$. 
\begin{enumerate}
	\item
When $z_1=-4$, we have $\bar{\lam}'|_{D_5}=(0,3,2,1,0)$.
Thus ${\bf a}(\lambda)={\bf a}(\bar{\lam}'|_{D_5})=12$ by Proposition \ref{integral}, which implies that $M_{I}(z (\xi_1+\xi_6))$ is irreducible.
\item
When $z_1=-3$, we have $\bar{\lam}'|_{D_5}=(1,3,2,1,0)$.
Thus ${\bf a}(\lambda)={\bf a}(\bar{\lam}'|_{D_5})=13>12$ by Proposition \ref{integral}, which implies that $M_{I}(z (\xi_1+\xi_6))$ is reducible.
\end{enumerate}
Therefore when $ z_2\notin \mathbb{Z}$ and $z_1\in \mathbb{Z}$, $M_{I}(z_1 \xi_1+z_2\xi_6)$ is reducible if and only if $z_1\in -3+\mathbb{Z}_{\ge 0}$.
\item[2)] When $ z_1\notin \mathbb{Z}$ and $z_2\in \mathbb{Z}$, we have $\Delta_{[\lambda]}\simeq D_5$ (denote this isomorphism by $\phi$) and the simple system is $\Pi_{[\lambda]}=\{\alpha_6,\alpha_5,\alpha_4,\alpha_3,\alpha_2\}$.
Denote $\bar{\lam}'=\phi(\lam|_{\mathfrak{h}_{\Delta_{[\lambda]}}^*})$. Then $\bar{\lam}'|_{D_5}=(1+z_2)\xi_1+\xi_2+\xi_3+\xi_4+\xi_5=(4+z_2,3,2,1,0)$.
Similarly when $ z_1\notin \mathbb{Z}$ and $z_2\in \mathbb{Z}$, we can prove that $M_{I}(z_1 \xi_1+z_2\xi_6)$ is reducible if and only if $z_2\in -3+\mathbb{Z}_{\ge 0}$.
\item[3)]When $ z_1\notin \mathbb{Z}$, $z_2\notin \mathbb{Z}$ and $ z_1+z_2\notin \mathbb{Z}$, we have $\Delta_{[\lambda]}\simeq D_4$ (denote this isomorphism by $\phi$) and the simple system is $\Pi_{[\lambda]}=\{\alpha_2,\alpha_4,\alpha_5,\alpha_3\}$.
Denote $\bar{\lam}'=\phi(\lam|_{\mathfrak{h}_{\Delta_{[\lambda]}}^*})$. Then $\bar{\lam}'|_{D_4}=\xi_1+\xi_2+\xi_3+\xi_4=(3,2,1,0)$ which is dominant integral. Thus ${\bf a}(\lambda)={\bf a}(\bar{\lam}'|_{D_4})=12$ by Proposition \ref{integral}, which implies that $M_{I}(z (\xi_1+\xi_6))$ is irreducible.

\item[4)]When $ z_1\notin \mathbb{Z}$, $z_2\notin \mathbb{Z}$ and $ z_1+z_2\in \mathbb{Z}$, we have $\Delta_{[\lambda]}\simeq D_5$ (denote this isomorphism by $\phi$) and the simple system is $\Pi_{[\lambda]}=\{\gamma,\alpha_2,\alpha_4,\alpha_5,\alpha_3\}$ where $\gamma=\frac{1}{2}(-e_1-e_2-e_3-e_4+e_5-e_6-e_7+e_8)$.
Denote $\bar{\lam}'=\phi(\lam|_{\mathfrak{h}_{\Delta_{[\lambda]}}^*})$. Then $\bar{\lam}'|_{D_5}=(5+z_1+z_2)\xi_1+\xi_2+\xi_3+\xi_4+\xi_5=(8+z_1+z_2,3,2,1,0)$. 
\begin{enumerate}
	\item
When $z_1+z_2=-8$, we have $\bar{\lam}'|_{D_5}=(0,3,2,1,0)$.
Thus ${\bf a}(\lambda)={\bf a}(\bar{\lam}'|_{D_5})=12$ by Proposition \ref{integral}, which implies that $M_{I}(z (\xi_1+\xi_6))$ is irreducible.
\item
When $z_1+z_2=-7$, we have $\bar{\lam}'|_{D_5}=(1,3,2,1,0)$.
Thus ${\bf a}(\lambda)={\bf a}(\bar{\lam}'|_{D_5})=13>12$ by Proposition \ref{integral}, which implies that $M_{I}(z (\xi_1+\xi_6))$ is reducible.
\end{enumerate}
Therefore when $ z_1\notin \mathbb{Z}$, $z_2\notin \mathbb{Z}$ and $ z_1+z_2\in \mathbb{Z}$, $M_{I}(z_1 \xi_1+z_2\xi_6)$ is reducible if and only if $z_1+z_2\in -7+\mathbb{Z}_{\ge 0}$.

\end{enumerate}
  
\end{proof}

\begin{Cor}\label{e6-zong}
   Let $ \mathfrak{g}=E_6$. The scalar generalized Verma module $M_I(z_1\xi_1+z_2\xi_6)$ is reducible if and only if one of the following holds:
   \begin{enumerate}
       \item $z_1\in -3+\mathbb{Z}_{\ge 0}$ and $z_2\in \mathbb{C}$.
       \item $z_2\in -3+\mathbb{Z}_{\ge 0}$ and $z_1\in \mathbb{C}$.
       \item $z_1+z_2\in -7+\mathbb{Z}_{\ge 0}$.
   \end{enumerate}
\end{Cor}

Now we complete the proof of Theorem \ref{thm-e}.

Similarly we can draw these reducible points in the following complex plane $\mathbb{C}^2$:

\hspace{1cm}

\begin{center}
{
\psset{unit=1.4cm}
   \tiny{ \begin{pspicture}(-4,-4)(2.5,2)
        \psset{linecolor=black}
        \cnode*(0,0){0.06}{a4}
        \cnode*(-2,-1){0.06}{a14}
        \cnode*(-1,-1){0.06}{a15}
        \cnode*(-2,-2){0.06}{b4}
        \cnode*(-1,-2){0.06}{b5}
        \cnode*(-2.5,-2.5){0.06}{b7}
        \cnode*(-1.5,-1.5){0.06}{b8}
        \cnode*(-0.5,-0.5){0.06}{b9}
        \psset{linecolor=black}
        \psline[linewidth=1pt](-5,1)(2,1)
        \psline[linewidth=1pt](-5,0)(2,0)
        \psline[linewidth=1pt](-5,-1)(2,-1)
        \psline[linewidth=1pt](-5,-2)(2,-2)
        \psline[linewidth=1pt](1,-4)(1,2)
        \psline[linewidth=1pt](0,-4)(0,2)
        \psline[linewidth=1pt](-1,-4)(-1,2)
        \psline[linewidth=1pt](-2,-4)(-2,2)
        \psline[linewidth=1pt](-5.2,0.2)(-0.8,-4.2)
        \psline[linewidth=1pt](-5.2,1.2)(0.2,-4.2)
        \psline[linewidth=1pt](-4.2,1.2)(1.2,-4.2)
        \psline[linewidth=1pt](-3.2,1.2)(1.2,-3.2)
        \psline[linewidth=1pt](-2.2,1.2)(1.2,-2.2)
        \psline[linewidth=1pt](-1.2,1.2)(1.2,-1.2)
        \psline[linewidth=1pt](-0.2,1.2)(1.2,-0.2)
        \psline[linewidth=1pt](0.8,1.2)(1.2,0.8)
        \uput[u](-1,1.9){\scriptsize{$z_1=-2$}}
        \uput[u](-2,1.9){\scriptsize{$z_1=-3$}}
        \uput[u](0,1.9){\scriptsize{$z_1=-1$}}
        \uput[u](1,1.9){\scriptsize{$z_1=0$}}
        \uput[r](1.9,-1){\scriptsize{$z_2=-2$}}
 \uput[r](1.9,0) {\scriptsize{$z_2=-1$}}
 \uput[r](1.9,1){\scriptsize{$z_2=0$}}
 \uput[r](1.9,-2){\scriptsize{$z_2=-3$}}

        \uput[r](-1.5,-1.2){\scriptsize(-2,-2)}
        \uput[r](-2.5,-1.2){\scriptsize(-3,-2)}
        \uput[r](-1.5,-2.2){\scriptsize(-2,-3)}
        \uput[r](-0.47,-0.2){\scriptsize(-1,-1)}
        \uput[d](-2.7,-2.5){\scriptsize(-3.5,-3.5)}
        \uput[d](-2.2,-1.9){\scriptsize(-3,-3)}
        \uput[u](-3,1){\scriptsize{$\vdots$}}
        \uput[u](-3,1.4){\scriptsize{$\vdots$}}
        \uput[u](-3,1.75){\scriptsize{$\cdot$}}
        \uput[u](-4,1){\scriptsize{$\vdots$}}
        \uput[u](-4,1.4){\scriptsize{$\vdots$}}
        \uput[u](-4,1.75){\scriptsize{$\cdot$}}
        \uput[r](1,-3){\scriptsize{$\cdots$}}
        \uput[r](1.4,-3){\scriptsize{$\cdots$}}
        \uput[r](1.75,-3){\scriptsize{$\cdot$}}

    \end{pspicture}}}
\end{center}

\subsection*{Acknowledgments}
	 Z. Bai was supported  by the National Natural Science Foundation of
	China (No. 12171344).

\end{document}